\numberwithin{equation}{section}
\theoremstyle{plain}
\newtheorem*{t-theorem}{Theorem}
\newtheorem{theorem}{Theorem}[section]
\newtheorem{corollary}[theorem]{Corollary}
\newtheorem{lemma}[theorem]{Lemma}
\theoremstyle{definition}
\newtheorem{remark}[theorem]{Remark}
\newtheorem{definition}[theorem]{Definition}
\newtheorem{step}{Step}
\newtheorem*{claim}{Claim}
\def \N {\mathbb{N}}
\def \R {\mathbb{R}}
\def \E {\mathbb{E}}
\def \P {\mathbb{P} \, }
\def \VV {\mathcal{V}}
\def \WW {\mathcal{W}}
\def \LL {\mathcal{L}}
\def \NN {\mathcal{N}}
\def \a {\alpha}
\def \b {\beta}
\def \g {\gamma}
\def \e {\varepsilon}
\def \d {\delta}
\def \D {\Delta}
\def \t {\tau}
\def \Om {\Omega}
\def \etc {,\ldots,}
\def \conv {{\rm conv}}
\def \vol {{\rm vol}}
\def \supp {{\rm supp}}
\def \rowprod {\otimes_r}
\newcommand{\pr}[2]{\langle {#1} , {#2} \rangle}
\newcommand{\norm}[1]{\left \| #1 \right \|}
\begin{document}

\title {Row products of random matrices}
\author {Mark Rudelson}
\address{Department of Mathematics \\
   University of Michigan \\
   Ann Arbor, MI 48109. }
\email{rudelson@umich.edu}
\thanks{Research was supported in part by  NSF grants DMS-0907023 and DMS-1161372.}
\keywords{Random matrices, extreme singular values, privacy
protection}

\begin{abstract}
  Let $\D_1 \etc \D_K$ be $d \times n$ matrices. We define the row
  product of these matrices as a $d^K \times n$ matrix, whose rows
  are entry-wise products of rows of $\D_1 \etc \D_K$. This
  construction arises in certain computer science
  problems.  We study the question, to which extent
  the spectral and geometric properties of the row product
  of independent random  matrices resemble
   those properties for a $d^K \times n$ matrix with independent
  random entries. In particular, we show that the largest and the
  smallest singular values of these matrices are of the same order,
  as long as $n \ll d^K$.

  We also consider a problem of privately releasing the summary
  information about a database, and use the previous results to
  obtain a bound for the minimal amount of noise, which has to be added to the
  released data to avoid a privacy breach.
\end{abstract}

\maketitle

\section{Introduction}

 This paper discusses spectral and geometric properties of a certain
 class of random matrices with dependent rows, which are constructed
 from random matrices with independent entries. Such constructions
 first appeared in computer science, in the study of privacy
 protection for contingency tables. The behavior of the extreme
 singular values of various random matrices with dependent entries
 has been extensively studied in the recent years \cite{ALPT1}, \cite{ALPT2},
 \cite{KRSU}, \cite{RV1}, \cite{V}. These matrices arise in
 asymptotic geometric analysis \cite{ALPT1}, signal processing
 \cite{ALPT2}, \cite{RV1}, statistics \cite{V} etc.
  The row products studied below have also originated in a computer
  science problem \cite{KRSU}.

 For two matrices with the same number of rows we define the row
 product as a matrix whose rows consist of entry-wise product of the
 rows of original matrices.
 \begin{definition}
  Let  $x$ and $y$ be $1 \times n$ matrices. Denote by $x\rowprod y$ the $1 \times n$ matrix, whose entries
  are products of the corresponding entries of $x$ and $y$: $x \rowprod y(j) =x(j) \cdot y(j)$.
  If $A$ is an $N \times n$ matrix, and $B$ is an $M \times n$ matrix, denote by
  $A \rowprod B$ an $NM \times n$ matrix, whose rows are entry-wise products of the rows of $A$ and $B$:
  \[
    (A \rowprod B)_{(j-1)M+k-1}=A_j \rowprod B_k,
  \]
  where $(A \rowprod B)_l,A_j, B_k$ denote rows of the corresponding matrices.
 \end{definition}
 Row products arise in a number of computer science related problems. They have been introduced in \cite{GC} and studied in \cite{W} in the theory of probabilistic automata. They also appeared in compressed sensing, see \cite{CM} and \cite{GTSV}, as well as in privacy protection problems \cite{KRSU}. These papers use different notation for the row product; we adopt the one from \cite{GTSV}.

  This paper considers spectral and geometric properties of row products of
  a finite number of independent random matrices. The definition above assumes a certain
  order of the rows of the matrix $A \rowprod B$. This order, however, is not important,
  since changing the relative positions of rows of a matrix doesn't affect its eigenvalues
  and singular values. Therefore, to simplify the notation, we will denote the row of
  the matrix $C= A \rowprod B$ corresponding to the rows $A_j$ and $B_k$ by $C_{j,k}$.
  We will use a similar convention for the rows of the row products of more than two matrices.

  Recall that the singular values of $N \times n$
  random matrix $A$ are the eigenvalues of $(A^*A)^{1/2}$ written in
  the non-increasing order: $s_1(A) \ge s_2(A) \ge \ldots \ge s_n(A)
  \ge 0$. The first and the last singular values have a clear
  geometric meaning: $s_1(A)$ is the norm of $A$, considered as a
  linear operator from $\ell_2^n$ to $\ell_2^N$, and if $n \le N$
  and $\text{rank}(A)=n$, then
  $s_n(A)$ is the reciprocal of the norm of $A^{-1}$ considered as a
  linear operator from $\ell_2^N \cap A \R^n$ to $\ell_2^n$. The
  quantity $\kappa(A)=s_1(A)/s_n(A)$, called {\em the condition
  number} of $A$, controls the error level and the rate of
  convergence of many algorithms in numerical linear algebra. The
  matrices with bounded condition number are ``nice'' embedding of
  $\R^n$ into $\R^N$, i.e. they don't significantly distort the
  Euclidian structure. This property holds, in particular, for
  random $N \times n$ matrices with independent centered subgaussian entries having unit variance,
  as long as $N \gg n$.

  Obviously, the row product of several matrices is a submatrix of their tensor product.
  This fact, however, doesn't provide much information about the spectral properties of
  the row product, since they can be different from those of the tensor product.
  In particular, for random matrices, the spectra of $A \otimes B$ and $A \rowprod B$
  are, indeed, very different.
  For example, let $d \le n \le d^2$,
  and consider
  $d \times n$ matrices $A$ and $B$
  with independent $\pm 1$ random values.
   The spectrum of $A \otimes B$ is the
  product of spectra of $A$ and $B$, so   the norm of $A \otimes B$
  will be of the order
  \[
 O \big( (\sqrt{n}+\sqrt{d})^2 \big)= O (n),
  \]
  and the last singular value is
  $
 O \big( (\sqrt{n}- \sqrt{d})^2 \big),
  $
  see \cite{RV2}. From the other side, computer experiments show
  that the extreme singular values of  the row product behave as for
  the $d^2 \times n$ matrix with independent entries, i.e. the first
  singular value is
  \[
    O (d+ \sqrt{n})=O(d),
  \]
   and the last one is  $O (d -
  \sqrt{n})$, see \cite{KRSU}. Based on this data, it was
  conjectured that the extreme singular values of the row product of
  several
  random matrices behave like for the matrices with independent
  entries. This fact was established in \cite{KRSU} up logarithmic
  terms, whose powers depended on the number of multipliers.
  We remove these logarithmic terms in Theorems \ref{t: norm-row
  product} and  \ref{t: smallest singular value} for row
  products of any fixed number of random matrices with independent
  bounded entries.
  To formulate these results more
  precisely, we introduce a class of uniformly bounded random variables,
  whose variances are
 uniformly bounded below. To shorten the notation we summarize their
 properties in the following definition.
 \begin{definition}
  Let $\d>0$. We will call a random variable $\xi$ a $\d$
  random variable if $|\xi| \le 1$ a.s., $\E \xi=0$, and $\E \xi^2 \ge \d^2$.
 \end{definition}
 We start with an estimate of the norm of the row product of random
 matrices with independent $\d$ random entries.

\begin{theorem}  \label{t: norm-row product}
 Let  $\D_1 \etc \D_K$ be  $d \times n$ matrices with
 independent $\d$ random entries.
 Then the $K$-times entry-wise product
 $\D_1 \rowprod \D_2 \rowprod \ldots \rowprod \D_K$ is a  $d^K \times n$ matrix satisfying
  \[
    \P \left(\norm{\D_1 \rowprod \ldots \rowprod \D_K}  \ge C' (d^{K/2}+n^{1/2})
       \right)
    \le \exp \left(-c \left( d+ \frac{n}{d^{K-1}} \right) \right).
  \]
  The constants $C', c$ may depend upon $K$ and $\d$.
\end{theorem}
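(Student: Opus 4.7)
I plan to proceed by induction on $K$. For the base case $K=1$, the bound $\norm{\D_1} \le C(\sqrt{d} + \sqrt{n})$ with failure probability at most $2 e^{-c(d+n)}$ is a standard estimate, obtained via an $\e$-net on $S^{n-1}$ combined with Bernstein's inequality for sums of bounded centered random variables.

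For the inductive step, set $M_{K-1} = \D_1 \rowprod \cdots \rowprod \D_{K-1}$ so that $M_K = M_{K-1} \rowprod \D_K$. Expanding the square row-wise yields, for every $x \in S^{n-1}$,
\[
  \norm{M_K x}_2^2 \;=\; \sum_{k=1}^{d} (\D_K)_k^{T}\, B(x)\, (\D_K)_k,
  \qquad B(x) := \text{diag}(x)\, M_{K-1}^{*} M_{K-1}\, \text{diag}(x).
\]
Conditioned on $\D_1, \ldots, \D_{K-1}$ the positive semidefinite matrix $B(x)$ is deterministic, and the right-hand side is a sum of $d$ independent quadratic forms in the bounded centered rows $(\D_K)_k$. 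The conditional expectation is at most $d\cdot\max_l \norm{M_{K-1}^{(l)}}_2^2 \le d^K$, and the inductive estimate $\norm{M_{K-1}}^2 \le C(d^{K-1}+n)$ yields $\norm{B(x)}_{\rm op} \le \norm{x}_\infty^2 \cdot C(d^{K-1}+n)$ together with $\norm{B(x)}_F^2 \le \norm{x}_\infty^2 \cdot C^2 (d^{K-1}+n)^2$.

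I would then apply the Hanson--Wright inequality to this sum of independent quadratic forms, obtaining for each fixed $x$ a tail bound on $\norm{M_K x}_2^2$ with exponent $-c\min\bigl(t^2/(d\norm{B(x)}_F^2),\, t/\norm{B(x)}_{\rm op}\bigr)$. With $t = C(d^K+n)$ and $\norm{x}_\infty \lesssim 1/\sqrt{n}$, both arguments of this minimum are at least of order $d + n/d^{K-1}$, which matches the probability claimed in the theorem. The main obstacle is converting this pointwise estimate into a uniform bound over $S^{n-1}$: a plain $\e$-net has cardinality $e^{Cn}$, and Hanson--Wright alone cannot be beaten by a union bound for arbitrary $x$. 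I would resolve this in the standard way by partitioning $S^{n-1}$ into flat vectors (with $\norm{x}_\infty \lesssim 1/\sqrt n$), handled directly by the tail bound and the $\e$-net, and peaky vectors (concentrated on a small set of coordinates), which live in a low-dimensional subspace admitting a much smaller net and can be treated either by a direct union bound in that subspace or by reducing to a smaller row-product subproblem and reapplying the inductive hypothesis with $n$ replaced by the size of the support. Combining the two regimes yields the stated probability $\exp\bigl(-c(d+n/d^{K-1})\bigr)$.
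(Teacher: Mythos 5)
There is a genuine gap, and it sits exactly at the point you flag as the ``main obstacle.'' The conditional Hanson--Wright step for a fixed $x$ is fine, but neither of your two remedies for making it uniform survives inspection. (a) ``Reapplying the inductive hypothesis with $n$ replaced by the support size'' is not the inductive hypothesis (your induction is on $K$, not $n$), and more importantly it only gives failure probability $\exp\bigl(-c(d+m/d^{K-1})\bigr)$ for a fixed $m$-element support, which cannot withstand the union bound over $\binom{n}{m}\approx\exp\bigl(m\log(en/m)\bigr)$ supports in the intermediate range (take $K\ge 3$, $n=d^{K}$, $m=d^{2}$: you would need $\exp(-cd)$ to beat $\exp(cd^{2}\log d)$). (b) A single flat/peaky threshold $m_0$ cannot cover $S^{n-1}$ in the main regime $n\sim d^{K}$: after removing the $m_0$ largest coordinates the flat part only satisfies $\norm{z}_\infty\le m_0^{-1/2}$, its net still has size $e^{cn}$, and for such $z$ the Hanson--Wright exponent is of order $C'(d^{K}+n)m_0/(d^{K-1}+n)\approx C'm_0$, so you would need $m_0\gtrsim n$ and the dichotomy collapses. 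What a net-based argument really requires is a multi-scale decomposition over all support sizes together with operator-norm control of \emph{all} column-submatrices of $\D_1\rowprod\cdots\rowprod\D_{K-1}$ (to bound $\norm{B(x)}$ for vectors peaky at each scale); such submatrix bounds are not supplied by the plain induction, and when proved they carry factors $\log^{(K-1)/2}(en/|J|)$ --- this is precisely the class $\WW_{K-1}$ and Lemma \ref{l: norm} of the paper, and precisely the mechanism that produced the parasitic logarithms in \cite{KRSU} that the theorem is meant to remove. A smaller slip: your bound $\norm{B(x)}_F^2\le\norm{x}_\infty^2C^2(d^{K-1}+n)^2$ is too lossy near $n\sim d^{K}$ (it costs a factor about $d$ in the first Hanson--Wright term); one needs $\norm{B(x)}_F^2\le\norm{B(x)}\operatorname{tr}B(x)\le\norm{x}_\infty^2C(d^{K-1}+n)d^{K-1}$.

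The paper takes a different route that avoids nets altogether, and you should compare it with your plan. It first bounds $\E\norm{\tilde\D}^{2p}$ by a Geman-style trace/path-counting argument (Theorem \ref{t: norm-moment}), which via Chebychev gives the correct norm bound $C'(d^{K/2}+n^{1/2})$ but only with probability $1-\exp(-cn^{1/12K})$; this is then upgraded by a measure-concentration step. Since $\norm{\D_1\rowprod\cdots\rowprod\D_K}$ is not convex in all the entries jointly, the paper proves a ``polyconvex'' extension of Talagrand's inequality (Lemma \ref{l: polyconvex}): the norm is convex and $d^{(K-1)/2}$-Lipschitz as a function of each $\D_k$ with the others fixed in $B_\infty$, and an induction on the blocks yields concentration around the median with exponent $t^2/d^{K-1}$. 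Taking $t=C''(d^{K/2}+n^{1/2})$ gives exactly the claimed probability $\exp\bigl(-c(d+n/d^{K-1})\bigr)$. If you want to salvage your approach, you would have to build the full multi-scale submatrix machinery (as in Lemma \ref{l: norm}, where the logarithmic corrections are arranged to vanish at full support), which is a substantially harder argument than the plan suggests and still yields a weaker probability than the concentration route when $n\gg d^{K}$.
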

The paper \cite{KRSU} uses an $\e$-net argument to bound the norm of the row product. This is one of the sources of the logarithmic terms in the bound. To eliminate these terms, we use a different approach. The expectation of the norm is bounded using the moment method, which is one of the standard tools of the random matrix theory. The moment method allows to bound the probability as well. However, the estimate obtained this way would be too weak for our purposes.  Instead, we apply the measure concentration inequality for convex functions, which is derived from Talagrand's measure concentration theorem.

The bound for the norm in Theorem \ref{t: norm-row product} is the same as for a $d^K \times n$ random matrix with bounded or subgaussian
i.i.d. entries, while the probability estimate is significantly weaker than in the independent case.
 Nevertheless, the estimate of Theorem \ref{t: norm-row product} is
optimal both in terms of the norm bound and the probability
(see Remarks \ref{r: optimal norm} and \ref{r: optimal probability} for details). In the important for us case $d^K \ge n$ the
assertion of Theorem \ref{t: norm-row product} reads
  \[
    \P \left(\norm{\D_1 \rowprod \ldots \rowprod \D_K}  \ge C' \sqrt{d^K}
       \right)
    \le \exp \left(-c d \right).
  \]

 It is well-known that with high probability a random $N \times n$ matrix $A$ with
 independent identically distributed bounded centered random entries has a bounded condition number,
  whenever $N \gg n$ (see, e.g.
 \cite{RV}).
 Our next result shows that the same happens for the row products of
 random matrices as well. For the next theorem we need the iterated logarithmic function.

 \begin{definition}
  For $q \in \N$ define the function $\log_{(q)}: (0, \infty) \to \R$
  by induction.
  \begin{enumerate}
    \item $\log_{(1)} t =\max \big(\log t, 1 \big)$;
    \item $\log_{(q+1)} t = \log_{(1)} \big( \log_{(q)} t \big)$.
  \end{enumerate}
 \end{definition}
 Throughout the paper we assume that the constants appearing in
 various inequalities may depend upon the parameters $K,q,\d$, but
 are independent of the size of the matrices, and the nature of
 random variables.

\begin{theorem}  \label{t: smallest singular value}
 Let $K,q,n,d$ be natural numbers. Assume that
 \[
   n \le \frac{c d^K}{\log_{(q)}d}.
 \]
 Let  $\D_1 \etc \D_K$ be  $d \times n$ matrices with
 independent $\d$ random entries.
 Then the $K$-times entry-wise product
 $\D_1 \rowprod \D_2 \rowprod \ldots \rowprod \D_K$ satisfies
 \[
   \P \left(s_n(\D_1 \rowprod \ldots \rowprod \D_K) \le    c' \sqrt{d^K}   \right)
   \le  C \exp \left( -  \bar{c} d  \right).
 \]
\end{theorem}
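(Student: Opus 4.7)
The plan is to reduce the theorem to a pointwise lower bound $\|M_K x\| \ge c\sqrt{d^K}$ for each fixed $x \in S^{n-1}$, and then to promote this to a uniform bound over $S^{n-1}$ via an $\varepsilon$-net argument that uses Theorem~\ref{t: norm-row product} to absorb the approximation error. Throughout, set $M_K := \D_1 \rowprod \cdots \rowprod \D_K$.

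For a fixed unit vector $x$, the starting point is the identity
\[
  \|M_K x\|^2 \;=\; \sum_{i=1}^{d} \|M_{K-1}\, \mathrm{diag}((\D_K)_i)\, x\|^2,
\]
obtained by grouping the rows of $M_K$ according to the row index $i \in [d]$ of $\D_K$. Conditional on $\D_1,\ldots,\D_{K-1}$, the $d$ summands are independent nonnegative quadratic forms $Y_i = w_i^T B w_i$ in the independent $\d$ random entries of the row $w_i := (\D_K)_i$, where $B := \mathrm{diag}(x)\, M_{K-1}^T M_{K-1}\, \mathrm{diag}(x)$. A direct calculation gives
\[
  \E\!\left[Y_i \mid \D_1, \ldots, \D_{K-1}\right] \;=\; \sum_{j=1}^n x_j^2\, \E[\D_K(i,j)^2] \prod_{k<K} \|(\D_k)^{col}_j\|^2,
\]
and a Bernstein concentration for each column norm $\|(\D_k)^{col}_j\|^2$ around its mean $\ge \d^2 d$, followed by a union bound over $(j,k)$, gives $\E[Y_i \mid \D_1, \ldots, \D_{K-1}] \ge \d^{2K}\, d^{K-1}$ on an event of probability at least $1 - 2Kn\exp(-cd)$.

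To convert the conditional mean into a tail estimate, I would apply Hanson--Wright concentration for each $Y_i$ around its conditional mean (using Theorem~\ref{t: norm-row product} applied to $M_{K-1}$ to bound $\|B\|$ and $\|B\|_F$) and then a Chernoff/Bernstein bound for the i.i.d.\ sum $\sum_i Y_i$. An $\varepsilon$-net of $S^{n-1}$ with $\varepsilon$ of absolute constant order, together with $\|M_K\| \le C\sqrt{d^K}$ from Theorem~\ref{t: norm-row product} to handle the Lipschitz error, then reduces the theorem to the pointwise estimate on a net of cardinality $(3/\varepsilon)^n$.

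The principal obstacle is arranging the pointwise failure probability to be small enough to beat the $(3/\varepsilon)^n$ cost of the net across the full admissible range of $n$. A single pass of Hanson--Wright combined with Chernoff is strong enough only when $n \lesssim d^{K-1}$; reaching the near-optimal $n \sim d^K / \log_{(q)} d$ requires a recursive refinement in which the output of one pass (valid for a wider $n$ but with worse constants) feeds back into the boundedness constant of the next Bernstein/Hanson--Wright step. The Rudelson--Vershynin compressible/incompressible split enters naturally: incompressibility of $x$ forces $\max_i \|\mathrm{diag}((\D_K)_i)\, x\|^2$ to concentrate, which tightens the boundedness input at each pass. Each of $q$ iterations absorbs one logarithmic factor in the admissible $n$, and this telescoping is the source of the iterated logarithm $\log_{(q)} d$ in the hypothesis.
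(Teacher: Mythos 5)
Your reduction to a pointwise bound plus a single constant-$\varepsilon$ net of $S^{n-1}$ cannot work in the regime of this theorem, and the sketched ``recursive refinement'' does not repair it. The pointwise small ball probability is only $\exp(-cd)$ for peaky directions, and this is unimprovable: if the $\d$ random variables can vanish, then with probability $\exp(-cd)$ an entire column of $\D_1$ is zero and $(\D_1 \rowprod \ldots \rowprod \D_K)e_1=0$ (this is exactly Remark \ref{r: zero column}); no amount of Hanson--Wright sharpening changes this. Meanwhile a constant-$\varepsilon$ net of the whole sphere has cardinality $e^{cn}$ with $n$ allowed to be as large as $cd^K/\log_{(q)}d \gg d$, so the union bound gives $e^{cn}\exp(-cd)\gg 1$. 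This is precisely the obstruction the paper points out (``the union bound in the $\e$-net argument breaks down''). Splitting into compressible/incompressible vectors, or stratifying by support size and $\norm{x}_\infty$, raises the real difficulty you do not address: a general unit vector must be decomposed into pieces of different sparsity/spread levels, and lower bounds for the pieces cannot be combined by the triangle inequality (it goes the wrong way). Your one-sentence claim that $q$ iterations of a Bernstein/Hanson--Wright bootstrap each ``absorb one logarithmic factor'' is exactly the step that needs a mechanism, and none is given. The paper's solution is the infimum-chaining Lemma \ref{l: chaining}: one isolates, for each $x$, a single ``big'' block of coordinates, conditions on the smaller blocks via a net of the low-dimensional set $Q_J$, and uses a Levy concentration estimate for the big block; this is the heart of the proof and your proposal contains no substitute for it. (Recall also that the multi-net LPRT-style alternative is what produced the parasitic logarithms in \cite{KRSU} that this theorem removes.)

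A secondary but substantive gap is in the per-direction estimate itself. Bounding $\norm{B}=\norm{(\D_1\rowprod\ldots\rowprod\D_{K-1})\,\mathrm{diag}(x)}^2$ by $\norm{x}_\infty^2$ times the full operator norm from Theorem \ref{t: norm-row product} is far too crude once $n \gg d^{K-1}$: for a vector supported on $m$ columns the relevant quantity is the norm of the corresponding $d^{K-1}\times m$ column submatrix, which is why the paper proves the uniform submatrix bounds of Lemma \ref{l: norm} (the class $\WW_{K-1}$, with the $\sqrt{|J|}\log^{(K-1)/2}(en/|J|)$ dependence) and the $Q$-norm lower bound of Lemma \ref{l: Q norm} ($\VV_{K-1}$) before proving the per-direction Levy concentration estimate, Lemma \ref{l: one vector}, whose two-term form (one term driven by $d\norm{x}_2^2/\norm{x}_\infty^2$, one by $d^K/\log^{K-1}$) is what the chaining scales are tuned to. Your conditional-mean computation via column norms is fine, but without submatrix norm control the resulting tail bounds are too weak across the intermediate sparsity range to beat even the stratified net counts. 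Finally, note the paper does not prove the $\ell_2$ statement directly at all: Theorem \ref{t: smallest singular value} is deduced in one line from the $\ell_1$ lower bound of Theorem \ref{t: L_1 norm bound} together with the norm bound, via $\norm{\tilde\D x}_1 \le d^{K/2}\norm{\tilde\D x}_2$; the $\ell_1$ (and $Q$-norm) structure is what makes the Talagrand-based concentration with the right dependence on $\norm{x}_\infty$ available.
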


 This bound, together with the norm estimate above shows that the
 condition number of the row product of matrices with $\d$ random
 entries exceeds a constant with probability $O(\exp(-c d))$.
 While this probability is close to 0, it is much bigger
 than that for a $d^K \times n$ random matrix with independent random entries,
  in which
 case it is of order $\exp (- d^K)$.
  However, it is easy to show
 that this estimate is optimal (see Remarks \ref{r: optimal
 norm} and \ref{r: zero column}). This weak probability bound
 renders standard approaches to singular value estimates unusable.
 In particular, the size of a $(1/2)$ net on the sphere $S^{n-1}$ is
 exponential in $n$, so the union bound in the $\e$-net argument
 breaks down.

 This weaker bound not only makes the proofs more technically
 involved, but also leads to qualitative effects which cannot be
 observed in the context of random matrices with independent
 entries. One of the main applications of random matrices in
 asymptotic geometric analysis is to finding roughly Euclidean or
 almost Euclidean sections of
 convex bodies. In particular, the classical theorem of Kashin
 \cite{Ka} states that a random section of the unit ball of
 $\ell_1^N$ by a linear subspace of dimension proportional to $N$ is
 roughly Euclidean. The original proof of Kashin used a random $\pm
 1$ matrix to construct these sections. The optimal bounds were
 obtained by Gluskin, who used random Gaussian matrices \cite{Gl}.

 The particular structure of the $\ell_1$ norm plays no role
 in this result, and it can be extended to a larger class of convex bodies.
 Let $D \subset \R^N$ be a convex symmetric
 body such that $B_2^N \subset D$ and define the {\em volume ratio}
 \cite{STJ}
 of $D$ by
 \[
  \text{vr}(D)= \left( \frac{\vol (D)}{\vol(B_2^N)} \right)^{1/N}.
 \]
 Assume that the volume ratio of $D$ is bounded: $\text{vr}(D) \le V$.
 Then for a random $N \times n$ matrix $A$ with independent entries satisfying certain conditions,
 \[
  \P (\exists x \in \R^n \norm{Ax}_D \le (cV)^{-\frac{N}{N-n}} N^{1/2} \norm{x}_2)
  \le \exp (-c N).
 \]
 This fact was originally established in \cite{Sz}, and extended in
 \cite{LPRTV} to a broad
 class of random matrices with independent entries.
  However, the volume ratio theorem doesn't
 hold for the row product of random matrices. We show in Lemma
 \ref{l: volume ratio} that there exists a convex symmetric body $D \subset
 \R^{d^K}$ with bounded volume ratio, such that
 \[
 \inf_{x \in S^{n-1}} \norm{\tilde{\D}x}_D \le c (Kd)^{1/2}
 \]
 with probability 1. For $K>1$ this bound is significantly lower
 than $N=d^{K/2}$, which corresponds to the independent entries case.

 Surprisingly, despite the fact that the general volume ratio
 theorem breaks down, it still holds for the original case of the
 $\ell_1$ ball. The main result of this paper is the following
 Theorem.

\begin{theorem}  \label{t: L_1 norm bound}
 Let $K,q,n,d$ be natural numbers. Assume that
 \[
   n \le \frac{c d^K}{\log_{(q)}d}.
 \]
  and let $\D_1 \etc \D_K$ be  $d \times n$ matrices with
 independent $\d$ random entries. Then the $K$-times entry-wise product
 $\tilde{\D}=\D_1 \rowprod \D_2 \rowprod \ldots \rowprod \D_K$ is a  $d^K \times n$ matrix satisfying
 \[
   \P \left( \exists x \in S^{n-1} \ \norm{\tilde{\D}x}_1 \le c' d^K \right)
   \le C' \exp \left( - \bar{c} d  \right).
 \]
\end{theorem}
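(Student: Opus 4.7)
The plan follows the same two-step template as Theorems \ref{t: norm-row product} and \ref{t: smallest singular value}: first a pointwise exponential lower bound on $\norm{\tilde\D x}_1$, then a uniformity argument adapted to the weak $\exp(-cd)$ tail that is the best one can hope for at the pointwise level.

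For the pointwise step, fix $x\in S^{n-1}$ and write each coordinate of $\tilde\D x$ as
\[
(\tilde\D x)_{(i_1,\ldots,i_K)}=\sum_{j=1}^n x_j\, Y_{(i_1,\ldots,i_K),j},\qquad Y_{(i_1,\ldots,i_K),j}=\prod_{k=1}^K \D_k(i_k,j).
\]
For fixed $(i_1,\ldots,i_K)$ the variables $Y_{(i_1,\ldots,i_K),j}$ are independent across $j$, bounded by $1$, centered, and have second moment at least $\d^{2K}$, while their fourth moment is bounded by a constant. A Paley--Zygmund / Khintchine--Kahane inequality therefore yields $\E|(\tilde\D x)_{(i_1,\ldots,i_K)}|\ge c\norm{x}_2$, so that $\E\norm{\tilde\D x}_1\ge c d^K$. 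To convert the mean bound into a tail bound I would apply the convex concentration inequality derived from Talagrand's theorem (the same tool used in Theorem \ref{t: norm-row product}) to the map $(\D_1,\ldots,\D_K)\mapsto\norm{\tilde\D x}_1$ on $[-1,1]^{Kdn}$. Changing one entry $\D_k(l,j)$ affects only the $d^{K-1}$ rows with $i_k=l$ and alters each of them by at most $2|x_j|$, so the squared total Lipschitz constant is of order $Kd^{2K-1}\norm{x}_2^2 = Kd^{2K-1}$; Talagrand then produces
\[
\P\bigl(\norm{\tilde\D x}_1\le \tfrac12 c d^K\bigr)\le \exp(-\bar c d)
\]
for each fixed $x$.

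The main difficulty is upgrading this pointwise estimate to a uniform one on all of $S^{n-1}$. A naive $\e$-net cannot work: the Euclidean Lipschitz constant of $x\mapsto \norm{\tilde\D x}_1$ is of order $\sqrt{d^K}\,\norm{\tilde\D}\asymp d^K$ (via Theorem \ref{t: norm-row product}), forcing $\e$ to be constant and producing a net of size $\exp(\Theta(n))$, while $n$ may reach $d^K/\log_{(q)}d$. I would therefore decompose $S^{n-1}$ into ``compressible'' vectors, essentially supported on at most $k\lesssim d$ coordinates, and successive strata of ``incompressible'' vectors indexed by the magnitude of $\norm{x}_\infty$. Compressible vectors live in a union of $\binom{n}{k}$ low-dimensional subspaces whose combined $\e$-nets have only $\exp(Cd)$ points and are handled directly by the pointwise estimate. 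Incompressible vectors are controlled by an iterative peeling: at each of $q$ scales a sharper pointwise bound valid for more spread $x$ is combined with a finer $\e$-net compatible with the improved tail, one nested logarithm being shaved off the ceiling on $n$ at each step, which is exactly where the hypothesis $n\le cd^K/\log_{(q)}d$ enters.

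The most delicate point is to execute this peeling so that at every stage the combinatorial entropy of the current stratum matches the strength of the available Talagrand estimate, and so that the final highly-spread residual set can be controlled via Theorem \ref{t: smallest singular value} together with the comparison $\norm{y}_1\ge \norm{y}_2^2/\norm{y}_\infty$. A final application of Theorem \ref{t: norm-row product} then transfers the net-level estimate to all of $S^{n-1}$; the event $\{\norm{\tilde\D}\ge C'\sqrt{d^K}\}$ contributes only an additional $\exp(-cd)$ to the total failure probability, as required.
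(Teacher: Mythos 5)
Your pointwise step is essentially sound (Paley--Zygmund for the mean, then a polyconvex Talagrand argument as in Lemma \ref{l: polyconvex}), but it only yields a \emph{flat} bound $\P(\norm{\tilde{\D}x}_1 \le c d^K) \le \exp(-\bar c d)$: the worst-case Lipschitz constant you compute, of order $d^{K-1/2}\norm{x}_2$, does not improve when $x$ is spread out, because you are not exploiting any cancellation in the fixed matrices $\D_1 \etc \D_{K-1}$. Your uniformity scheme, however, silently requires exactly such an improvement --- you invoke ``a sharper pointwise bound valid for more spread $x$'' at each peeling scale, but never produce one, and with only $\exp(-\bar c d)$ per point no union bound can cover nets of the size forced by $n$ up to $c d^K/\log_{(q)}d$. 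Obtaining the spread-dependent estimate is where most of the paper's work lies: one conditions on $U=\D_1\rowprod\ldots\rowprod\D_{K-1}$ lying in the class $\WW_{K-1}$ (uniform norm control of all column submatrices, Lemma \ref{l: norm}) intersected with $\VV_{K-1}$ (lower bound on $\norm{U\rowprod x^T}_Q$, Lemma \ref{l: Q norm}); then the conditional Lipschitz constant $\sqrt{N}\,\norm{U\rowprod x^T}$ is small for spread $x$, and Lemma \ref{l: one vector} gives a Levy-concentration bound of order $\exp(-c d\norm{x}_2^2/\norm{x}_\infty^2)$ capped at $\exp(-cd^K/\log^{K-1}(\cdot))$. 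None of this machinery appears in your outline, and your joint-Talagrand route cannot substitute for it.

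The second gap is the mechanism for passing from strata to all of $S^{n-1}$. A generic unit vector has mass in many of your strata, and lower bounds on the pieces cannot be combined by the triangle inequality --- this is precisely the obstruction the paper's chaining lemma (Lemma \ref{l: chaining}) is built to overcome: one isolates the first ``heavy'' block $x_j$ of $x$, conditions on the columns supporting the preceding blocks $w$ (so that $Ax_j$ is independent of $Aw$), and uses \emph{anticoncentration of $Ax_j$ around an arbitrary shift} --- which is why the fixed-vector estimate must be a Levy concentration bound for $\norm{Ax-y}_1$ with arbitrary $y$ (Theorem \ref{t: L_1 Levy concentration bound}), not just a small-ball bound at $0$ --- and then unions over a small net of possible $w$'s, with the net sizes matched scale-by-scale to the spread-dependent probabilities via the choice $l_j \sim d^j/\log^j d$ for $j\le K$ and the iterated-logarithm recursion for $j>K$ (this is where $\log_{(q)}d$ enters). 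Your proposed fallback for the most spread residual set is also flawed: invoking Theorem \ref{t: smallest singular value} is circular, since in this paper that theorem is \emph{derived from} Theorem \ref{t: L_1 norm bound}, and the inequality $\norm{y}_1\ge\norm{y}_2^2/\norm{y}_\infty$ is quantitatively insufficient anyway, because uniformly over the sphere one can only guarantee $\norm{\tilde{\D}x}_\infty\le\norm{x}_1\le\sqrt n$, giving $\norm{\tilde{\D}x}_1\gtrsim d^K/\sqrt n \ll c' d^K$.
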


Note that the results similar to Theorems \ref{t: norm-row product},
\ref{t: smallest singular value}, and \ref{t: L_1 norm bound} remain
valid if the matrices $\D_1 \etc \D_K$ have different numbers of
rows, and the proofs require only minor changes.

 The rest of the paper is organized as follows. In Section \ref{s: non-privacy}
  we consider a privacy protection problem from which the study of
 row products has originated. We derive
 an estimate on the minimal amount of noise needed to avoid a privacy breach
  from Theorem \ref{t: smallest singular value}.
   Section \ref{s: notation}
  introduces necessary notation.
  Section \ref{s:
  outline} contains an outline of the proofs of Theorems \ref{t: norm-row
  product} and \ref{t: L_1 norm bound}.
  Theorem \ref{t: norm-row product}
  is proved in the first part of Section \ref{s: norm}.
   The rest of
  this section and Section \ref{s: Levy}
  develop technical tools needed to prove Theorem \ref{t: L_1 norm bound}.

  In Section \ref{s: chaining} we introduce a new technical method for obtaining lower estimates. The minimal norm of $Ax$ over the unit sphere is frequently bounded via an $\e$-net argument. The implementation of this approach in \cite{KRSU} was one of the main sources of the parasitic logarithmic terms. In Section \ref{s: chaining} the lower bound is handled differently.  The required bound is written as the infimum of a random process. The most powerful method of controlling the {\em supremum} of a random process is to use chaining, i.e. to represent the process as a sum of increments, and control the increments separately \cite{Ta1}. Such method, however, cannot be directly applied to control the {\em infimum} of a positive random process. Indeed, lower estimates for the increments cannot be automatically combined to obtain the lower estimate for the sum. Nevertheless, In Lemma \ref{l: chaining} we develop a variant of a chaining, which allows to control the {\em infimum} of a process.
  This chaining lemma is the major step in proving Theorem \ref{t: L_1 norm bound}, which is presented  in Section \ref{s: lower
  bounds}, where we also derive
  Theorem \ref{t: smallest singular value} from it.

  {\bf Acknowledgement:} the author thanks Dick Windecker and Martin Strauss for pointing out to the papers \cite{GC, W, CM, GTSV}, and the referee for correcting numerous typos in the first version of the paper.

\section{Minimal noise for attribute non-privacy}    \label{s: non-privacy}

 Marginal, or contingency tables are the standard way of releasing
 statistical summaries of data. Consider a database $D$, which we
 view as a $d \times n $ matrix with entries from $\{0,1\}$. The
 columns of the matrix are $n$ individual records, and the
 rows correspond to $d$ attributes of each record. Each attribute is
 binary, so it may be either present, or absent. For any set of $K+1$
 different attributes we release the percentage of records having
 all attributes from this set. The list of these values for all
 $\binom{d}{K+1}$ sets forms the contingency table. In the row product
 notation the contingency table is the subset of coordinates of the
 vector
 \[
   y=\Big( \overset{K +1 \text{ times}}{ \overbrace{D \rowprod \ldots \rowprod D} }  \Big) w,
 \]
 which correspond to all sets of $K+1$ different rows of the matrix
 $D$. Here $w \in \R^n$ is the vector with coordinates $w=(1
 \etc1)$.

 The attribute non-privacy model refers to the situation when $d-1$
 rows of the database $D$ are publicly available, or leaked, and one
 row is sensitive. The analysis of a more general case,
 where there are more than one sensitive
 attribute can be easily reduced to this setting.
 For the comparison of this model with other privacy models see
 \cite{KRSU}, and the references therein.
 Denote the $(d-1) \times n$
 submatrix of $D$ corresponding to non-sensitive attributes by $D'$,
 and the sensitive vector by $x$. Then the coordinates of $y$ contain
 all coordinates of the vector
 \[
  z=\Big( \overset{K \text{ times}}{ \overbrace{D' \rowprod \ldots \rowprod D'} \rowprod \, x^T }  \Big) w
  =\Big( \overset{K \text{ times}}{ \overbrace{D' \rowprod \ldots \rowprod D'} }  \Big)
  x,
 \]
 which correspond to $K$ different rows of the matrix $D'$.
 Hence, if the database $D'$ is generic, then the
 sensitive vector $y$ can be reconstructed from $D'$ and the released vector $z$ by
 solving a linear system. To avoid this privacy breach, the
 contingency table is released with some random noise. This noise
 should be sufficient to make the reconstruction impossible, and at
 the same time, small enough, so that the summary data presented in
 the contingency table would be reliable. Let $z_{\text{noise}}$ be
 the vector of added noise. Let $\bar{D'}$ be the $\binom{D}{K}
 \times n$ submatrix of $D' \rowprod \ldots \rowprod D'$ corresponding
 to all $K$-element subsets of $\{1 \etc n\}$. If the last singular
 value of $\bar{D'}$ is positive, then one can form the left
 inverse $(\bar{D'})^{-1}_L$ of $\bar{D'}$, and
 $\norm{(\bar{D'})^{-1}_L}=s_n^{-1}(\bar{D'})$.
 In this case, knowing the released data $z+z_{\text{noise}}$ we can
 approximate the sensitive vector $x$ by
 $x'=(\bar{D'})^{-1}_L(z+z_{\text{noise}})$. Then
 \[
   \norm{x-x'}_2= \norm{(\bar{D'})^{-1}_L z_{\text{noise}}}_2
   \le \norm{(\bar{D'})^{-1}_L} \cdot \norm{z_{\text{noise}}}_2.
 \]
 Therefore, if $\norm{z_{\text{noise}}}_2 =o(\sqrt{n} \cdot
 s_n^{-1}(\bar{D'}))$,
 then $\norm{x-x'}_2=o(\sqrt{n})$. Since the coordinates of $x$ are
 0 or 1, we can reconstruct $(1-o(1))n$ coordinates of $x$ by
 rounding the coordinates of $x'$. Thus, the lower estimate of
 $s_n^{-1}(\bar{D'})$ provides a lower bound for the norm of the
 noise vector.

 We analyze below the case of a random database. Assume that the
 entries of the database are independent $\{0,1\}$ variables, and
 the entries in the same column are identically distributed. This
 means that the distribution of any given attribute is the same for
 each record, but different attributes can be distributed
 differently. We exclude almost degenerate attributes, i.e. the
 attributes having probabilities very close to 0 or 1.
  In this case bound on the minimal amount of noise
 follows from

\begin{theorem}  \label{t: non-privacy}
 Let $K,q,n,d$ be natural numbers. Assume that
 \[
   n \le \frac{c d^K}{\log_{(q)}d}.
 \]
 Let $0<p'<p''<1$, and let $p_1 \etc p_d$ be any numbers such that
 $p'<p_j<p''$.
  Consider a $d \times n$ matrix  $A$ with
 independent Bernoulli entries $a_{j,k}$ satisfying $\P(a_{j,k}=1)=p_j$
 for all $j=1 \etc d, \ k=1 \etc n$.

 Then the $K$-times entry-wise product
 $\tilde{A}=A \rowprod A \rowprod \ldots \rowprod A$ is a  $d^K \times n$ matrix satisfying
 \[
   \P \left(s_n(\tilde{A}) \le    c' \sqrt{d^K}   \right)
   \le  C' \exp \left( - \bar{c} d  \right).
 \]
 The constants $c,c',C,C'$ may depend upon the parameters
 $K,q,p',p''$.
\end{theorem}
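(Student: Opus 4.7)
The plan is to reduce Theorem \ref{t: non-privacy} to Theorem \ref{t: smallest singular value} by centering the entries and analysing the Gram matrix of $\tilde A$. Writing $a_{j,k}=p_j+\xi_{j,k}$, each $\xi_{j,k}$ is centered, bounded by $1$, and has variance $p_j(1-p_j)\ge p'(1-p'')=:\d^2$, so the matrix $B=(\xi_{j,k})$ has independent $\d$-random entries and Theorem \ref{t: smallest singular value} applies to $B$ and to its $K$-fold row product directly.

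I work with $G:=\tilde A^T\tilde A$ and observe that, since the $k$-th column of $\tilde A$ equals $u_k^{\otimes K}$ with $u_k=Ae_k$, we have $G_{kl}=\pr{u_k}{u_l}^K$, and it suffices to prove $\lambda_{\min}(G)\ge (c')^2 d^K$. With $\mu=(p_j)_{j=1}^d$, $v_k:=u_k-\mu$, $\beta:=\|\mu\|^2\asymp d$, $\eta_k:=\pr{\mu}{v_k}$, and $\tilde G:=B^T B$, a short computation gives
\[
  \pr{u_k}{u_l}=\beta+\eta_k+\eta_l+\tilde G_{kl}.
\]
The deterministic leading part of $\E G$ is approximately $\alpha^K I+\beta^K(J-I)$ with $\alpha=\sum_j p_j$, whose smallest eigenvalue equals $\alpha^K-\beta^K\ge (\alpha-\beta)\beta^{K-1}\gtrsim d\cdot d^{K-1}=d^K$, using $\alpha-\beta=\sum_j p_j(1-p_j)\gtrsim d$.

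Next I binomially expand $G_{kl}=(\beta+\eta_k+\eta_l+\tilde G_{kl})^K$ and group by total degree. The first-order term $K\beta^{K-1}\tilde G$ is PSD with $\lambda_{\min}\gtrsim d^K$ by the $K=1$ case of Theorem \ref{t: smallest singular value} applied to $B$; this is the key positive contribution. The rank-at-most-two bias $K\beta^{K-1}(\eta\mathbf{1}^T+\mathbf{1}\eta^T)$ would on its own carry negative eigenvalues of order $n\sqrt d\,\beta^{K-1}$, but it combines with $\beta^K J$ and with a piece from the second-order expansion into a completion of squares
\[
  \beta^K J+K\beta^{K-1}(\eta\mathbf{1}^T+\mathbf{1}\eta^T)+K^2\beta^{K-2}\eta\eta^T=\beta^{K-1}\bigl(\sqrt{\beta}\,\mathbf{1}+\tfrac{K}{\sqrt{\beta}}\eta\bigr)\bigl(\sqrt{\beta}\,\mathbf{1}+\tfrac{K}{\sqrt{\beta}}\eta\bigr)^T\succeq 0.
\]
The coefficient mismatch $K^2$ vs $K(K-1)$ between this identity and the quadratic contribution $K(K-1)\beta^{K-2}\eta_k\eta_l$ actually produced by the binomial expansion is absorbed by higher-order contributions of the same rank structure, which one bounds via Theorem \ref{t: norm-row product} applied to $B$ and Hoeffding-type concentration of each $\eta_k$ (a sum of $d$ independent centered bounded summands).

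The principal obstacle, already present in the proof of Theorem \ref{t: smallest singular value} itself, is obtaining the \emph{uniform} lower bound on $\norm{\tilde A x}$ over $x\in S^{n-1}$ with only $\exp(-\bar c d)$ failure probability; a direct $\e$-net on the sphere is ruled out by its exponential-in-$n$ cardinality. I invoke Lemma \ref{l: chaining} exactly as in the proof of Theorem \ref{t: L_1 norm bound}: the chain increments involve only the centered matrix $B$ (the deterministic shift $p_j$ cancels across the differences along the chain), so the chaining machinery ports verbatim from the centered setting, with the shift $\mu$ entering only as bookkeeping coefficients.
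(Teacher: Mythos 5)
Your reduction breaks at two essential points. First, after centering you only have \emph{one} $d\times n$ matrix $B=(a_{j,k}-p_j)$, and the object you would need to control is the $K$-fold row product of $B$ \emph{with itself}; Theorem \ref{t: smallest singular value} is stated for $K$ matrices $\D_1\etc\D_K$ whose entries are jointly independent, and it does not ``apply directly'' to $B\rowprod\ldots\rowprod B$, where the factors are identical copies of the same random matrix. The paper's proof is built precisely around this obstruction: it partitions the $d$ rows of $A$ into $2K$ disjoint blocks and forms the antisymmetrized combinations $\D_j=D_j^0\rowprod\D_j^1-D_j^1\rowprod\D_j^0$, which are $K$ mutually independent $d'\times n$ matrices with independent, centered, bounded entries of variance bounded below in terms of $p',p''$, and for which the pointwise domination $\norm{(\D_1\rowprod\ldots\rowprod\D_K)x}_2\le 2^K\norm{\tilde A x}_2$ holds because each of the $2^K$ terms in the expansion is a submatrix of $\tilde A$ with entries damped by probabilities $|p_i|\le 1$. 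That gives $s_n(\tilde A)\ge 2^{-K}s_n(\D_1\rowprod\ldots\rowprod\D_K)$ and the theorem follows from Theorem \ref{t: smallest singular value} with $d'\asymp d/K$. Nothing of this structure is present in your argument, and the closing claim that the chaining machinery ``ports verbatim'' is not a substitute: the chaining proof conditions on $\D_1\etc\D_{K-1}$ and uses the independence of the last factor, which is destroyed when all factors are the same matrix $B$.

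Second, the Gram-matrix expansion contains a false key step. You claim the first-order term $K\beta^{K-1}\tilde G$ with $\tilde G=B^TB$ has $\lambda_{\min}\gtrsim d^K$ ``by the $K=1$ case of Theorem \ref{t: smallest singular value} applied to $B$.'' But $B$ is $d\times n$ and the hypothesis of the theorem at $K=1$ forces $n\lesssim d$, whereas here $n$ may be as large as $cd^K/\log_{(q)}d\gg d$; in that regime $\rank(\tilde G)\le d<n$, so $\lambda_{\min}(\tilde G)=0$ and the ``key positive contribution'' vanishes identically. The lower bound for $s_n(\tilde A)$ cannot be extracted from any single-degree term of the binomial expansion of $\pr{u_k}{u_l}^K$; it comes from the joint $K$-fold product structure. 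Relatedly, the promise that the rank-two bias terms of size $\sim n\sqrt d\,\beta^{K-1}$ and the coefficient mismatch are ``absorbed by higher-order contributions'' is unsubstantiated, and with $n\sim d^K$ those terms are far larger than the target scale $d^K$, so the absorption would have to be proved, not asserted. As written, the proposal does not establish the theorem.
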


\begin{proof}
 This theorem will follow from Theorem \ref{t: smallest singular
 value}, after we pass to the row product of matrices having
 independent $\d$ random entries. To this end, notice that if an $m
 \times n$ matrix $U'$ is formed from the $M \times n$ matrix $U$ by
 taking a subset of rows, then $s_n(U') \le s_n(U)$.

  Let $d=2Kd'+m$,
 where $0 \le m<2K$. For $j=1 \etc K$ denote by $\D_j^1$ the submatrix of
 $A$ consisting of rows $(2K(j-1)+1) \etc (2K(j-1)+K)$, and by $\D_J^0$
 the submatrix consisting or rows $(2K(j-1)+K+1) \etc 2Kj$.
 Let $D_j^1,D_j^0 \in \R^{d'}$ be vectors with coordinates
  $D_j^1=(p_{2K(j-1)+1} \etc p_{2K(j-1)+K})$ and
 $D_j^0=(p_{2K(j-1)+K+1} \etc p_{2Kj})$.
 Set
 \[
  \D_j= D_j^0 \rowprod \D_j^1- D_j^1 \rowprod \D_j^0.
 \]
 Then $\D_1 \etc \D_K$ are $d' \times n$
 matrices with independent $\d$ random entries for some $\d$
 depending on $p', p''$.

 Let $U_s, \ s=1,2,3$ be $N_s \times n$ matrices, and let $D \in \R^{N_2}$ be a
 vector with coordinates satisfying $|d_j| \le 1$ for
 all $j$. Then for any $x \in \R^n$
 \[
   \norm{(U_1 \rowprod (D^T \rowprod U_2) \rowprod U_3)x}_2
   \le \norm{(U_1 \rowprod  U_2 \rowprod U_3)x}_2.
  \]
  Indeed, any coordinate of $(U_1 \rowprod (D^T \rowprod U_2) \rowprod
  U_3)x$ equals the correspondent coordinate of $(U_1 \rowprod  U_2 \rowprod
  U_3)x$ multiplied by some $d_{j,j}$, so the inequality above
  follows from the bound on $|d_{j,j}|$.
  This argument shows that for any $(\e_1 \etc \e_K) \in \{0,1\}^K$
  \begin{multline*}
    \norm{\Big(\big((D_1^{1-\e_1})^T \rowprod \D_1^{\e_1} \big) \rowprod \ldots
    \rowprod \big( (D_1^{1-\e_K})^T \rowprod\D_K^{\e_K} \big) \Big) x}_2 \\
    \le  \norm{\Big(\D_1^{\e_1} \rowprod \ldots \rowprod \D_K^{\e_K}\Big) x}_2.
  \end{multline*}

 Therefore,
 \begin{align*}
   \norm{\big(\D_1 \rowprod \ldots \rowprod \D_K \big) x}_2
   &\le \sum_{\e=(\e_1 \etc \e_K) \in \{0,1\}^K}
       \norm{\big( \D_1^{\e_j} \rowprod \ldots \rowprod \D_K^{\e_j} \big) x}_2 \\
   &\le 2^K \norm{\big(A \rowprod \ldots \rowprod A\big) x}_2,
 \end{align*}
 because  $\D_1^{\e_j} \rowprod \ldots
 \D_K^{\e_j}$ is a submatrix of $A \rowprod \ldots \rowprod A$.
 Thus, for any $t>0$
 \[
   \P ( s_n(A \rowprod \ldots \rowprod A) <t)
   \le \P (s_n(\D_1 \rowprod \ldots \D_K) < 2^K t).
 \]
 To complete the proof we use Theorem \ref{t: smallest singular value} with $d'$
 in place of $d$, and note that $d \le 3K d'$.
\end{proof}

\section{Notation and preliminary results} \label{s: notation}

 The coordinates of a vector $x \in \R^n$ are denoted by $(x(1) \etc x(n))$.
 Throughout the paper we will intermittently consider $x$ as a vector in $\R^n$ and as an $n \times 1$ matrix.
 The sequence  $e_1 \etc e_n$ stands for the standard basis in $\R^n$.
 For $1 \le p < \infty$ denote by $B_p^n$ the
 unit  ball of the space $\ell_p^n$:
 \[
   B_p^n= \left\{x \in \R^n \mid \norm{x}_p= \left( \sum_{j=1}^n
   |x(j)|^p \right)^{1/p} \le 1 \right\}.
 \]
 By $S^{n-1}$ we denote the Euclidean  unit sphere.

   Denote by $\norm{A}$ the
 operator norm of the matrix $A$, and by $\norm{A}_{HS}$ the
 Hilbert--Schmidt norm:
 \[
   \norm{A}_{HS}= \left(\sum_{j,k} |a_{j,k}|^2 \right)^{1/2}.
 \]
 The volume of a convex set $D \subset \R^n$ will be denoted
 $\vol(D)$, and the cardinality of a finite set $J$ by $|J|$.
 By $\lfloor x \rfloor$ we denote the integer part of $x \in \R$.
 Throughout the paper we denote by $K$ the number of terms in the
 row product, by $q$ the number of iterations of logarithm, and by
 $\d^2$ the minimum of the variances of the entries of random
 matrices.
 $C,c$ etc. denote constants,
 which may depend on the parameters $K,q$, and $\d$,
 and whose
 value may change from line to line.

  Let $V \subset \R^n$ be a compact set, and let $\e>0$. A set
  $\NN \subset V$ is called an $\e$-net if for any $x \in K$ there
  exists $y \in \NN$ such that $\norm{x-y}_2 \le \e$. If $T: \R^n \to
  \R^m$ is a linear operator, and $\NN$ and $\NN'$ are $\e$-nets in
  $B_2^n$ and $B_2^m$ respectively, then
  \[
   \norm{T} \le (1-\e)^{-1} \sup_{x \in \NN} \norm{Tx}_2
   \le (1-\e)^{-2}  \sup_{x \in \NN}  \sup_{y \in \NN'} \pr{Tx}{y}.
  \]
  We will use the following {\em volumetric estimate}. Let $V
  \subset B_2^n$. Then for any $\e<1$ there exists an $\e$-net $\NN
  \subset V$ such that
  \[
    |\NN| \le \left( \frac{3}{\e} \right)^n.
  \]

  We will repeatedly use Talagrand's measure concentration
  inequality for convex functions (see \cite{Ta}, Theorem 6.6, or \cite{L}, Corollary 4.9).
  \begin{t-theorem} [Talagrand]
   Let $X_1 \etc X_n$ be independent random variables with values in
   $[-1,1]$. Let $f: [-1,1]^n \to \R$ be a convex $L$-Lipschitz function,
   i.e.
   \[
    \forall x,y \in [-1,1]^n \ |f(x)-f(y)| \le L \norm{x-y}_2.
   \]
   Denote by $M$ the median of $f(X_1 \etc X_n)$. Then for any
   $t>0$,
   \[
    \P \Big( |f(X_1 \etc X_n)-M| \ge t \Big)
    \le 4 \exp \left( - \frac{t^2}{16 L^2} \right).
   \]
  \end{t-theorem}

 To estimate various norms we will divide the coordinates of a
 vector $x \in \R^n$ into blocks. Let $\pi: \{1 \etc n\} \to \{1 \etc
 n\}$ be a permutation rearranging the absolute values of the
 coordinates of $x$ in the non-increasing order: $|x(\pi(1))| \ge
 \ldots \ge |x(\pi(1))|$. For $l<n$ and $0 \le m$ define
 \[
  N_0=0, \ N_m=\sum_{j=0}^{m-1} 4^j l, \text{ and set }
   I_m = \pi \Big( \{N_m+1 \etc N_{m+1} \} \Big).
 \]
  In other words, $I_0$ contains $l$ largest coordinates of $|z|$,
   $I_1$ contains $4 l$ next largest, etc. We continue as long as
   $I_m \neq \emptyset$. The block $I_m$ will be called the $m$-th
   block  of type $l$ of the coordinates of $x$. Denote $x|_I$ the
   restriction of $x$ to the coordinates from the set $I$.
  We need the following standard
  \begin{lemma}            \label{l: block decomposition}
   Let $b<1$ and let $x \in B_2^n \cap b B_{\infty}^n$. For $l \le
   b^{-2}$ consider blocks $I_0, I_1, \ldots$ of type $l$ of the coordinates of
   $x$. Then
   \[
     \sum_{m \ge 0} |I_m| \cdot
    \norm{x|_{I_m}}_{\infty}^2
    \le 5.
   \]
  \end{lemma}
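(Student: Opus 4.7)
The plan is to split the sum into the $m=0$ term, which is controlled by the sup-norm hypothesis, and the tail $m \ge 1$, which is controlled by a telescoping comparison between consecutive blocks.

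For the leading term, I would observe directly that $\|x|_{I_0}\|_\infty \le \|x\|_\infty \le b$ and $|I_0| = l \le b^{-2}$, so $|I_0| \cdot \|x|_{I_0}\|_\infty^2 \le 1$.

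For the tail, the key geometric fact is that the blocks are defined using the non-increasing rearrangement $\pi$: every coordinate in $I_{m-1}$ has absolute value at least as large as any coordinate in $I_m$. In particular, every entry of $x|_{I_{m-1}}$ is at least $\|x|_{I_m}\|_\infty$ in absolute value, which gives
\[
\|x|_{I_{m-1}}\|_2^2 \;\ge\; |I_{m-1}| \cdot \|x|_{I_m}\|_\infty^2 \;=\; 4^{m-1} l \cdot \|x|_{I_m}\|_\infty^2.
\]
Rearranging and multiplying by $|I_m| = 4^m l$, I get $|I_m| \cdot \|x|_{I_m}\|_\infty^2 \le 4 \|x|_{I_{m-1}}\|_2^2$. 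Summing over $m \ge 1$ and using the disjointness of the blocks together with $x \in B_2^n$,
\[
\sum_{m \ge 1} |I_m| \cdot \|x|_{I_m}\|_\infty^2 \;\le\; 4 \sum_{m \ge 0} \|x|_{I_m}\|_2^2 \;\le\; 4 \|x\|_2^2 \;\le\; 4.
\]
Combining with the $m=0$ bound yields the claimed constant $5$.

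There is no real obstacle here: the lemma is essentially a bookkeeping consequence of the geometric-progression choice $|I_m|=4^m l$ (chosen so that $|I_m|/|I_{m-1}| = 4$ beats the loss of a factor of $1$ in the comparison, leaving a uniformly summable series). The only point requiring the hypothesis $\|x\|_\infty \le b$ and $l \le b^{-2}$ is the first block; the rest of the argument never uses $b$ again. If one wanted a tighter constant, the factor $4$ in $|I_m|/|I_{m-1}|$ is the only lever, and the present statement with constant $5$ is more than sufficient for the applications that follow.
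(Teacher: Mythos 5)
Your proof is correct and is essentially identical to the paper's argument: both isolate the $m=0$ term (bounded by $l b^2 \le 1$) and bound each tail term via $|I_m|\cdot \norm{x|_{I_m}}_{\infty}^2 \le 4\, \norm{x|_{I_{m-1}}}_2^2$, summing to at most $4\norm{x}_2^2 \le 4$. No further comment is needed.
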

  \begin{proof}
   Note that the absolute value of any non-zero coordinate
   of $x_{I_{m-1}}$ is greater or equal $\norm{x|_{I_m}}_{\infty}$.
   Hence,
  \begin{align*}
    \sum_{m \ge 0} |I_m| \norm{x|_{I_m}}_{\infty}^2
    &= l \norm{x|_{I_0}}_{\infty}^2
     + 4 \sum_{m \ge 1} |I_{m-1}| \cdot \norm{x|_{I_m}}_{\infty}^2 \\
    &\le l b^2+4  \sum_{m \ge 1}  \norm{x|_{I_{m-1}}}_2^2
    \le 5.
  \end{align*}
  \end{proof}

 The next lemma shows that Theorem \ref{t: L_1 norm bound} cannot be
 extended from $L_1$ norm to a general Banach space whose unit ball
 has a bounded volume ratio.
 \begin{lemma}  \label{l: volume ratio}
 There exists a convex symmetric body $D \subset \R^{d^K}$ such that
 $B_2^{d^K} \subset D$,
 \[
  \left(\frac{\vol(D)}{\vol(B_2^{d^K})} \right)^{1/d^K} \le C
 \]
 satisfying
 \[
  \inf_{x \in S^{n-1}} \norm{(\D_1 \rowprod \ldots \rowprod \D_K)x}_D
  \le c (Kd)^{1/2}
 \]
  for all $d \times n$ matrices $\D_1 \etc \D_K$   with
   entries $1$ or $-1$.
 \end{lemma}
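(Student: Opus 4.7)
My plan is to construct $D$ explicitly as the convex hull of $B_2^{d^K}$ together with a rescaled family of tensor products of sign vectors, then verify bounded volume ratio via Urysohn's inequality, and finally exhibit a single standard basis vector $x = e_l$ which achieves the desired bound.

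I would begin by identifying $\R^{d^K}$ with $(\R^d)^{\otimes K}$ via $e_{(i_1, \ldots, i_K)} \leftrightarrow e_{i_1} \otimes \cdots \otimes e_{i_K}$. The pivotal observation is that under this identification, the $l$-th column of $\tilde{\D} = \D_1 \rowprod \cdots \rowprod \D_K$ equals the tensor product of the $l$-th columns of $\D_1, \ldots, \D_K$, since its $(i_1, \ldots, i_K)$-entry is $\prod_{j=1}^K (\D_j)_{i_j, l}$. Because every entry of $\D_j$ lies in $\{-1, 1\}$, each such column has the form $u_1 \otimes \cdots \otimes u_K$ with $u_j \in \{-1, 1\}^d$, and in particular has Euclidean norm $d^{K/2}$.

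Accordingly I would define
\[
T = \left\{ \pm \frac{1}{\sqrt{Kd}}\, u_1 \otimes \cdots \otimes u_K : u_j \in \{-1,1\}^d, \ j = 1, \ldots, K \right\}
\]
and set $D = \conv(B_2^{d^K} \cup T)$. Then $D$ is symmetric and convex, contains $B_2^{d^K}$, and contains $(1/\sqrt{Kd})\tilde{\D} e_l$ for every $l \in \{1, \ldots, n\}$. Taking $x = e_1 \in S^{n-1}$ therefore yields $\norm{\tilde{\D} x}_D \le \sqrt{Kd}$, which is exactly the required upper bound on the infimum.

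The only genuinely technical step is to bound the volume ratio of $D$, which I would handle via Urysohn's inequality applied to the Gaussian mean width
\[
w(D) = \E \sup_{z \in D} \pr{g}{z} = \E \max\!\left( \norm{g}_2,\; \frac{1}{\sqrt{Kd}} \max_{u_j \in \{-1,1\}^d} |\pr{g}{u_1 \otimes \cdots \otimes u_K}| \right),
\]
where $g$ is a standard Gaussian in $\R^{d^K}$. The first summand has expectation $\Theta(\sqrt{d^K})$. For the second, each inner product is a centered Gaussian of variance $d^K$, and $|T| \le 2 \cdot 2^{Kd}$, so the standard sub-Gaussian maximal inequality produces an expected maximum of order $\sqrt{d^K \cdot Kd}$; dividing by $\sqrt{Kd}$ returns $O(\sqrt{d^K})$ again. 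Hence $w(D) \le C\sqrt{d^K}$, and combined with $\E\norm{g}_2 \ge c\sqrt{d^K}$ Urysohn's inequality yields $(\vol(D)/\vol(B_2^{d^K}))^{1/d^K} \le C'$.

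The main design issue---and, I expect, the only real obstacle---is the choice of the scaling factor $1/\sqrt{Kd}$ in the definition of $T$. The parameters $\log|T| \asymp Kd$, $\norm{u_1 \otimes \cdots \otimes u_K}_2 = d^{K/2}$, and the target bound $\sqrt{Kd}$ balance precisely so that the Gaussian maximum $\sqrt{d^K \cdot Kd}$ divided by the scaling returns the same order $\sqrt{d^K}$ as $\E\norm{g}_2$. Any smaller scaling of $T$ would destroy the $\sqrt{Kd}$ upper bound on $\norm{\tilde{\D} e_l}_D$, and any larger one would inflate the mean width past $\sqrt{d^K}$ and destroy the bounded volume ratio.
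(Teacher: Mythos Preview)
Your proposal is correct and is essentially the same as the paper's proof: the paper also defines $D=\conv\bigl((Kd)^{-1/2}W,\,B_2^{d^K}\bigr)$ with $W=\{\e_1\otimes\cdots\otimes\e_K:\e_j\in\{-1,1\}^d\}$, bounds the volume ratio via Urysohn's inequality and the Gaussian maximal inequality over $|W|=2^{Kd}$ points, and then takes $x=e_1$ to obtain the infimum bound.
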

 \begin{proof}
  Set
  \[
    W= \bigcup_{\e_1 \etc \e_K \in \{-1,1\}^d}   \e_1
   \otimes
   \ldots \otimes \e_K
  \]
  and let
  $
   D= \conv \left ((dK)^{-1/2} W,  B_2^{d^K} \right).
  $
  To estimate the volume ratio of $D$ we use Urysohn's inequality
  \cite{P}:
 \[
  \left(\frac{\vol(D)}{\vol(B_2^{d^K})} \right)^{1/d^K}
  \le d^{-K/2} \E \sup_{x \in D} \pr{g}{x},
 \]
 where $g$ is a standard Gaussian vector in $\R^{d^K}$.
 Since
 \[
   D \subset (dK)^{-1/2} \conv (W) +  B_2^{d^K},
 \]
 the right
 hand side of the previous inequality is bounded by
 \[
  1+  d^{-K/2} \cdot (dK)^{-1/2}\E \sup_{x \in W} \pr{g}{x}
  \le 1+ c (dK)^{-1/2} \log^{1/2} |W|,
 \]
 where $|W|$ is the cardinality of $W$. Since $|W|=2^{dK}$, the
 volume ratio of $D$ is bounded by an absolute constant.

 Let $e_1$ be the first basic vector of $\R^n$.
 The lemma
 now follows from the equality
 $(\D_1 \rowprod \ldots \rowprod \D_K)e_1= \e_1 \otimes
   \ldots \otimes \e_K$, where $\e_1 \etc \e_K$ are the first
   columns of the matrices $\D_1 \etc \D_K$.
 \end{proof}

\section{Outline of the proof} \label{s: outline}

 We begin with proving Theorem \ref{t: norm-row product}. We use the
 moment method, which is one of the standard random matrix theory
 tools. To estimate the norm of a rectangular random matrix $A$ with
 centered entries, one considers  the matrix $(A^*A)^p$
 for some large $p \in \N$, and evaluates the expectation of its
 trace using combinatorics. Since $\norm{A}^{2p} \le \text{tr} (A^*A)^p$,
  any
 estimate of the trace translates into an estimate for the norm.
 Following a variant of this approach, developed in \cite{G}, we
 obtain an upper bound for the norm of the row product of
 independent random matrices, which is valid with probability close
 to $1$. However, the moment method alone is insufficient to obtain
 an exponential bound for the probability. To improve the
 probability estimate, we combine the bound for the median of the
 norm, obtained by the moment method, and a measure concentration
 theorem. To this end we extend Talagrand's measure concentration
 theorem for convex functions to the functions, which are {\em
 polyconvex}, i.e. convex with respect to certain subsets of
 coordinates.

 Before tackling the small ball probability estimate for
 \[
  \min_{x \in
   S^{n-1}}  \norm{(\D_1 \rowprod \ldots \rowprod \D_K)x}_1,
  \]
   we consider
 an easier problem of finding a lower bound for \\
 $  \norm{(\D_1 \rowprod \ldots \rowprod \D_{K-1} \rowprod \D_K)x}_1$ for a fixed vector $x
 \in S^{n-1}$. The entries of the row product are not independent, so
 to take advantage of independence, we condition on $\D_1 \etc
 \D_{K-1}$. To use Talagrand's theorem in this context, we have
 to bound the Lipschitz constant of this norm above, and the median
 of it below. Such bounds are not available for all matrices $\D_1 \etc
 \D_{K-1}$, but they can be obtained for ``typical'' matrices,
 namely outside of a set of a small probability.
 Moreover, the bounds will depend on the vector $x$, so to obtain
 them, we have to prove these estimates for all submatrices of the
 row product.
 This is done in
 Sections \ref{s: norm}.2 and \ref{s: norm}.3. Using these results,
 we bound the small ball probability in Section \ref{s: Levy}.
 Actually, we prove a stronger estimate for the {\em Levy
 concentration function}, which is the supremum of the small ball
 probabilities over all balls of a fixed radius.

 The final step of the proof is  combining the individual small ball
 probability estimates to obtain an estimate of the minimal
 $\ell_1$-norm over the sphere. This is usually done by introducing
 an $\e$-net, and approximating a point on the sphere by its
 element. Since the small ball probability depends on the direction
 of the vector $x$, one $\e$-net would not be enough. A modification
 of this method, using several $\e$-nets was developed in
 \cite{LPRT}. However, its implementation for the row products lead
 to appearance of parasitic logarithmic terms, whose degrees rapidly
 grow with $K$ \cite{KRSU}. To avoid these terms, we develop a new {\em chaining
 argument} in Section \ref{s: chaining}. Unlike standard chaining
 argument, which is used to bound the supremum of a random process,
 the method of section \ref{s: chaining} applies to the infimum.

 In section \ref{s: lower bounds} we
 combine the chaining lemma with the Levy concentration function
 bound of Section \ref{s: Levy} to complete the proof of Theorem
 \ref{t: L_1 norm bound}, and derive Theorem \ref{t: smallest singular
 value} from it. We also show that the image of $\R^n$ under the row
 product of random matrices is a Kashin subspace, i.e. the $\ell_1$
 and $\ell_2$ norms are equivalent on this space.

\section{Norm estimates} \label{s: norm}

\subsection{Norm of the matrix}
 We start with a preliminarily estimate of the operator norm of the row product
 of random matrices. To this end we use the moment method, which is
 based on bounding the expectation of the trace of high powers of
 the matrix. This approach, which is standard in the theory of random
 matrices with independent entries, carries over to the row product
 setting as well.
\begin{theorem}  \label{t: norm-moment}
 Let  $\D_1 \etc \D_K$ be  $d \times n$ matrices with
 independent $\d$ random entries.
 Let $p \in \N$ be a number such that $p \le c n^{1/12K}$.
 Then the $K$-times entry-wise product
 $\tilde{\D}=\D_1 \rowprod \D_2 \rowprod \ldots \rowprod \D_K$ is a  $d^K \times n$ matrix satisfying
 \[
   \E \norm{\tilde{\D}}^{2p}
   \le p^{2K+1} n \left( d^{1/2}+n^{1/2K} \right)^{2 pK}.
 \]
\end{theorem}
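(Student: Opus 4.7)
The plan is a trace (moment) method. Starting from $\norm{\tilde{\D}}^{2p} \le \mathrm{tr}((\tilde{\D}^*\tilde{\D})^p)$, the first step is the algebraic identity
\[
(\tilde{\D}^*\tilde{\D})_{k,k'} = \sum_{(j_1,\ldots,j_K) \in [d]^K}\, \prod_{\ell=1}^K (\D_\ell)_{j_\ell,k}(\D_\ell)_{j_\ell,k'} = \prod_{\ell=1}^K (\D_\ell^*\D_\ell)_{k,k'},
\]
which follows by factoring the summation over $(j_1,\ldots,j_K)$ into $K$ independent summations over $j_\ell \in [d]$. Expanding the trace of the $p$-th power along a closed cycle $k_1 \to k_2 \to \cdots \to k_p \to k_1$ in $[n]$ and then using independence of $\D_1,\ldots,\D_K$ to pull the expectation inside the product over $\ell$ yields
\[
\E\, \mathrm{tr}((\tilde{\D}^*\tilde{\D})^p) = \sum_{k_1,\ldots,k_p \in [n]}\, \prod_{\ell=1}^K \phi_\ell(k_1,\ldots,k_p),
\]
where $\phi_\ell(k_1,\ldots,k_p) := \E \prod_{i=1}^p (\D_\ell^*\D_\ell)_{k_i,k_{i+1}}$ with $k_{p+1}=k_1$.

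Next, I would bound each single-matrix quantity $\phi_\ell$ by the standard closed-walk combinatorics of the trace method, in the style of \cite{G}. Expanding $(\D_\ell^*\D_\ell)_{k_i,k_{i+1}} = \sum_{j_i}(\D_\ell)_{j_i,k_i}(\D_\ell)_{j_i,k_{i+1}}$, any monomial whose expectation does not vanish must use each entry $(\D_\ell)_{j,k}$ at least twice, and all nonvanishing expectations are bounded in modulus by $1$. Hence $|\phi_\ell(k_1,\ldots,k_p)|$ is at most the number of $(j_1,\ldots,j_p) \in [d]^p$ for which the closed bipartite walk $k_1 \to j_1 \to k_2 \to j_2 \to \cdots \to k_p \to j_p \to k_1$ covers every edge of its support at least twice. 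A tree argument bounds this count, for $k$-sequences with exactly $s$ distinct values, by $P(p)\, d^{p-s+1}$ for a fixed-degree polynomial $P$; the leading contribution comes from tree walks with each edge traversed exactly twice, while any non-tree or over-traversed configuration loses at least one power of $d$ in exchange for a polynomial-in-$p$ combinatorial factor.

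Grouping the outer sum by the number $s$ of distinct entries of $(k_1,\ldots,k_p)$ and bounding the number of such sequences by $p^{p-s} n^s$ gives
\[
\E\,\mathrm{tr}((\tilde{\D}^*\tilde{\D})^p) \le \sum_{s=1}^p p^{p-s}\, n^s\, \bigl(P(p)\, d^{p-s+1}\bigr)^K.
\]
To match the target, observe that the $j = 2K(p-s+1)$ term of the binomial expansion gives
\[
(d^{1/2}+n^{1/(2K)})^{2Kp} \ge \binom{2Kp}{2K(p-s+1)}\, d^{K(p-s+1)}\, n^{s-1},
\]
so multiplying by $n$ produces the monomial $d^{K(p-s+1)} n^s$ with a binomial coefficient that dominates $p^{p-s} P(p)^K$. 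Summing over $s \in \{1,\ldots,p\}$ absorbs one more factor of $p$, and the residual polynomial losses fit inside $p^{2K+1}$.

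The main obstacle is the combinatorial bookkeeping behind the bound on $|\phi_\ell|$: one must verify that the non-tree contributions (walks whose support contains a cycle, or edges traversed more than twice) are of strictly lower order in $d$, so that their accumulated polynomial-in-$p$ factors, once raised to the $K$-th power across the $K$ independent matrices, still fit inside a uniform $p^{2K+1}$ prefactor. The hypothesis $p \le c\, n^{1/(12K)}$ is used precisely here, to absorb error factors of the form $p^{O(K)}$ into $n$ so that the leading tree estimate remains dominant.
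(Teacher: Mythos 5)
Your setup is the same as the paper's: the trace method, with the key observation that $(\tilde{\D}^*\tilde{\D})_{k,k'}=\prod_{\ell}(\D_\ell^*\D_\ell)_{k,k'}$, so that after expanding the trace and using independence the $K$ matrices contribute independent bipartite walks sharing one column sequence. The gap is in the two quantitative steps you defer to ``bookkeeping'', and at least one of them fails as described. The decisive problem is the assembly: you bound the number of column sequences with exactly $s$ distinct values by $p^{p-s}n^s$ (in fact it is of order $\binom{p}{s}s^{p-s}n^s$), multiply by a \emph{worst-case} per-sequence bound $P(p)\,d^{p-s+1}$ for each matrix, and then compare the $s$-slice with the single binomial term $\binom{2Kp}{2K(p-s+1)}d^{K(p-s+1)}n^{s-1}$. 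For small $s$ this cannot work: already at $s=2$ the pattern count contributes a factor of order $2^{p}$, while $\binom{2Kp}{2K(p-1)}=\binom{2Kp}{2K}$ is only polynomial in $p$, so no prefactor $p^{2K+1}$ absorbs the discrepancy. The underlying reason is that most column patterns admit far fewer than $d^{p-s+1}$ admissible row assignments (e.g.\ the pattern alternating between two columns admits only about $(pd)^{p/2}$ of them), so the pattern count and the per-pattern row count must be estimated \emph{jointly}. This is exactly what the paper does: it never isolates a ``number of column patterns'' factor, but counts, for each matrix, full bipartite closed paths subject to the doubling condition (standard paths, with Geman's bound $m(r,c)\le\binom{p}{r}^2p^{12(p-r-c)+14}$), the column enumeration being consistent across the $K$ paths.

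Second, the intermediate claim $|\phi_\ell(k_1\etc k_p)|\le P(p)\,d^{p-s+1}$ with a \emph{fixed-degree} polynomial $P$ is only asserted, and your sketched justification (``each non-tree or over-traversed configuration loses a power of $d$ in exchange for a polynomial-in-$p$ factor'') would require summing $\sum_{\Delta}(\mathrm{poly}(p)/d)^{\Delta}$ over deficiencies $\Delta$ up to order $p$, which is controlled only if $d\gtrsim\mathrm{poly}(p)$ --- an assumption you do not have, since the hypothesis ties $p$ to $n$, not to $d$ (and $n$, hence $p$, may be arbitrarily large compared with $d$). In the paper the penalty per unit of deficiency, $p^{12}$, is paid with a factor $n^{-1/K}$, which is precisely where $p\le c\,n^{1/12K}$ enters, and the $\binom{p}{r}$ factors are absorbed into the full binomial sum $\bigl(\sum_{r}\binom{p}{r}d^{r/2}n^{(p-r)/2K}\bigr)^{2}=(d^{1/2}+n^{1/2K})^{2p}$ rather than into a single term of it. Your closing remark that the hypothesis is used to absorb ``factors of the form $p^{O(K)}$'' understates the loss: the factors to be absorbed are of size $p^{O(p)}$, and in your accounting the slice indexed by $s$ has no spare powers of $n$ left with which to absorb them. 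To repair the argument you would essentially have to redo the paper's bookkeeping (deficiency paid in powers of $n^{1/K}$, column patterns counted inside the per-matrix path counts), so as written the proposal does not yet constitute a proof.
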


\begin{proof}

 The proof of this theorem closely follows
 \cite{G}, so we will only sketch it. Denote the entries of the
 matrix $\D_l$ by $\d^{(l)}_{i,j}$, so the entry of the matrix $\tilde{\D}$ corresponding to the product of the entries in the rows $i^{(1)}, i^{(2)} \ldots i^{(K)}$ and column $j$ will be denoted
 $\d_{i_1^{(1)},j}^{(1)} \cdot \ldots \cdot
    \d_{i_1^{(K)},j}^{(K)}$.
      Then
 \begin{align*}
    \E \norm{\tilde{\D}}^{2p}
    &\le \E \text{tr}(\tilde{\D} \tilde{\D}^T)^p \\
    &\le \sum_V \E (\d_{i_1^{(1)},j_1}^{(1)} \cdot \ldots \cdot
    \d_{i_1^{(K)},j_1}^{(K)})
      \cdot (\d_{i_2^{(1)},j_1}^{(1)} \cdot \ldots \cdot
      \d_{i_2^{(K)},j_1}^{(K)})
      \cdot \ldots \\
      & \qquad \ldots \cdot (\d_{i_p^{(1)},j_p}^{(1)} \cdot \ldots \cdot
      \d_{i_k^{(K)},j_p}^{(K)})
      \cdot (\d_{i_1^{(1)},j_p}^{(1)} \cdot \ldots \cdot \d_{i_1^{(K)},j_p}^{(K)}).
 \end{align*}
 Here $V$ is the set of admissible multi-paths, i.e. a sequence of $2p$
 lists $\{(i_{m_1}^{(1)},j_m) \etc (i_{m_K}^{(1)},j_m)\}_{m=1}^{2p}$ such
 that
 \begin{enumerate}
 \item the column number $j_m$ is the same for all entries of the
 list $m$.
   \item the first list is arbitrary;
   \item the entries of the second list are in the same column as
   the  entries of the first list, the entries of the
   third list are in the same rows as the respective entries of the second
   list, etc.;
   \item the entries of the last list are in the same rows as the
   respective entries of the first list;
   \item every entry, appearing in each path, appears at list
   twice.
 \end{enumerate}
 Since the entries of the matrices $\D_1 \etc \D_K$ are uniformly
 bounded, the expectations are uniformly bounded as well, so
 \[
   \E \norm{\tilde{\D}}^{2p} \le  |V|.
 \]
 To estimate the cardinality of $V$ denote by $\b(r_1 \etc r_K,c)$
 the number of admissible multi-paths whose entries are taken from exactly
 $r_1$ rows of the matrix $\D_1$, exactly
 $r_2$ rows of the matrix $\D_2$, etc., and exactly from $c$ columns
 of each matrix. Note that the set of columns through which the path
 goes is common for the matrices $\D_1 \etc \D_K$.
 An admissible multi-path can be viewed as an ordered $K$-tuple of closed
 paths $q_1 \etc q_K$ of length $2p+1$ in the $d \times n$ bi-partite
 graph, such that $q_1(2j)=q_2(2j)= \ldots =q_K(2j)$ for $j=1 \etc
 p$, and each edge is traveled at least twice for each path.
 With this notation we have
 \begin{equation}    \label{norm-moment1}
   \E \norm{\tilde{\D}}^{2p} \le
   \sum_{J} \b(r_1 \etc r_K,c),
 \end{equation}
 where $J$ is the set of sequences
 of natural numbers $(r_1 \etc r_K,c)$ satisfying
 \[
   r_l+c \le p+1 \quad \text{ for each } l=1 \etc K.
 \]
 The inequality here follows from condition (5) above.
 Let $\g(r_1 \etc r_K,c)$ be the number of admissible multi-paths, which
 go through {\em the first} $r_1$ rows of the matrix $\D_1$,
 {\em the first} $r_2$ rows of the matrix $\D_2$, etc., and {\em the
 first} $c$ columns. Then
 \[
   \b(r_1 \etc r_K,c)
   \le \binom{n}{c} \cdot \prod_{l=1}^K \binom{d}{r_l} \cdot
   \g(r_1 \etc r_K,c).
 \]

 We call a closed path of length $2p+1$  path in the
 $d \times n$ bi-partite graph standard if
 \begin{enumerate}
   \item it starts with  the edge $(1,1)$;
   \item  if the path  visits a new
   left (right) vertex, then its number is the minimal among the left
   (right) vertices, which have not yet been visited by this path;
   \item each edge in the path is traveled at least twice.
 \end{enumerate}
 Let $m(r, c)$ is the number of the standard
  paths through
  $r$ left vertices
  and  $c$ right vertices of the bi-partite graph. Then
 \[
  \g(r_1 \etc r_K, c) \le c! \cdot \prod_{l=1}^K  r_l! \cdot
  m(r_l,c).
 \]
  This inequality follows from the fact that all $K$ paths in the
  admissible multi-path visit a new column vertex at the same time,
  so the column vertex enumeration defined by different paths of the
  same multi-path is consistent.
 Combining two previous estimates, we get
 \[
    \b(r_1 \etc r_K,c)
   \le n^c \cdot \prod_{l=1}^K d^{r_l} m(r_l,c).
 \]
 The inequality on page 260 \cite{G} reads
 \[
   m(r,c) \le \binom{p}{r}^2 \cdot p^{12(p-r-c)+14}.
 \]
 Substituting it into the inequality above, we obtain
 \begin{align}   \label{norm-moment2}
   &\sum_{J} \b(r_1 \etc r_K,c) \\
   &\le \sum_{c=1}^p \sum_{r_1+c \le p+1} \ldots \sum_{r_K+c \le p+1}
   n^c \cdot \prod_{l=1}^K d^{r_l}
     \cdot \binom{p}{r_l}^2 \cdot p^{12(p-r_l-c)+14} \notag \\
   &= \sum_{c=1}^p  \prod_{l=1}^K \sum_{r_l=1}^{p+1-c}
     n^{c/K} \cdot d^{r_l}
     \cdot \binom{p}{r_l}^2 \cdot p^{12(p-r_l-c)+14}. \notag
 \end{align}
 To estimate the last quantity note that since $p \le \frac{1}{2} n^{1/12K}$,
 \begin{align*}
   &\sum_{r_l=1}^{p+1-c}
     n^{c/K} \cdot d^{r_l}
     \cdot \binom{p}{r_l}^2 \cdot p^{12(p-r_l-c)+14} \\
   &= p^{2}n^{1/K} \sum_{r_l=1}^{p+1-c} p^{12(p+1-r_l-c)} n^{-(p+1-r_l-c)/K}
       \binom{p}{r_l}^2  d^{r_l} n^{(p-r_l)/K} \\
   &\le  p^{2} n^{1/K} \left( \sum_{r_l=0}^{p}
      \binom{p}{r_l}  (d^{1/2})^{r_l} (n^{1/2K})^{p-r_l} \right)^2 \\
   &= p^{2} n^{1/K} \left( d^{1/2}+n^{1/2K} \right)^{2p}.
 \end{align*}
 Finally, combining this with \eqref{norm-moment1} and
 \eqref{norm-moment2}, we conclude
 \[
   \E \norm{\tilde{\D}}^{2p}
   \le  p^{2K +1}n \left( d^{1/2}+n^{1/2K} \right)^{2pK}
   \le p^{2K +1} n \cdot \left( d^{1/2}+n^{1/2K} \right)^{2pK}.
 \]
\end{proof}

 Applying Chebychev's inequality, we can derive a large deviation
 estimate from the moment estimate of Theorem \ref{t: norm-moment}.
 \begin{corollary}    \label{cor: norm-moment}
  Under the conditions of Theorem \ref{t: norm-moment},
  \[
    \P \left(\norm{\D_1 \rowprod \ldots \rowprod \D_K}  \ge C' (d^{K/2}+n^{1/2})
       \right)
    \le \exp \left(-cn^{\frac{1}{12 K}} \right).
  \]
 \end{corollary}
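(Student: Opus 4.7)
The plan is to apply Markov's (Chebyshev's) inequality to the $2p$-th power of $\norm{\tilde\D}$ using the moment bound from Theorem~\ref{t: norm-moment}, then optimize the choice of $p$ within the allowed range $p \le c\, n^{1/(12K)}$. Explicitly, for any threshold $t>0$,
\[
 \P\bigl(\norm{\tilde\D} \ge t\bigr) = \P\bigl(\norm{\tilde\D}^{2p} \ge t^{2p}\bigr)
 \le t^{-2p}\, \E \norm{\tilde\D}^{2p}
 \le t^{-2p}\, p^{2K+1}\, n\, \bigl(d^{1/2}+n^{1/2K}\bigr)^{2pK}.
\]
The one small algebraic step is to convert $(d^{1/2}+n^{1/2K})^{2pK}$ into something comparable to $(d^{K/2}+n^{1/2})^{2p}$. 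I would use the convexity bound $(a+b)^K \le 2^{K-1}(a^K+b^K)$ applied with $a=d^{1/2}$ and $b=n^{1/2K}$, which yields
\[
 \bigl(d^{1/2}+n^{1/2K}\bigr)^{2pK} \le 2^{2p(K-1)}\bigl(d^{K/2}+n^{1/2}\bigr)^{2p}.
\]

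Next I would set $t = C'(d^{K/2}+n^{1/2})$ and absorb the $2^{2p(K-1)}$ term into the constant by choosing $C'$ large enough that $C'/2^{K-1} \ge 4$. The tail bound becomes
\[
 \P\bigl(\norm{\tilde\D} \ge C'(d^{K/2}+n^{1/2})\bigr)
 \le 4^{-2p}\, p^{2K+1}\, n.
\]
Now set $p = \lfloor c\, n^{1/(12K)} \rfloor$, which is admissible in Theorem~\ref{t: norm-moment}. Since then $n \le (p/c)^{12K}$, the polynomial factor satisfies $p^{2K+1} n \le C_K\, p^{14K+1}$, and the exponential $4^{-2p} = e^{-(2\ln 4) p}$ dominates this polynomial as soon as $p$ exceeds some $K$-dependent threshold; more precisely, $4^{-2p} p^{14K+1} \le \exp(-\bar c\, p) \le \exp(-\bar c\, n^{1/(12K)})$ for a constant $\bar c = \bar c(K) > 0$.

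I do not expect any serious obstacle: this is a routine Chebyshev/moment-method deduction, and no new probabilistic input is needed beyond Theorem~\ref{t: norm-moment}. The only minor issues are bookkeeping: (i) the convexity step converting $(d^{1/2}+n^{1/2K})^K$ into a constant multiple of $d^{K/2}+n^{1/2}$, and (ii) verifying that the polynomial prefactor $p^{2K+1}n$ is swallowed by the geometric factor $4^{-2p}$ once $p$ is of order $n^{1/(12K)}$. For small $n$ (where $n^{1/(12K)}$ is $O(1)$) the stated estimate is trivially enforced by taking $C'$ sufficiently large so that the probability is at most $1$, which is automatic from $\exp(-c\, n^{1/(12K)})$ once $c$ is adjusted.
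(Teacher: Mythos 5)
Your argument is exactly the paper's (implicit) proof: the corollary is obtained by applying Chebyshev/Markov to $\norm{\tilde\D}^{2p}$ with $p$ of order $n^{1/(12K)}$, using $(d^{1/2}+n^{1/2K})^{K}\le 2^{K-1}(d^{K/2}+n^{1/2})$ and absorbing the polynomial prefactor $p^{2K+1}n$ into the geometric factor by taking $C'$ large. The only loose point is your remark about small $n$ (the probability being ``at most $1$'' does not by itself give $\exp(-cn^{1/(12K)})<1$), but that case is genuinely trivial, e.g.\ via the deterministic bound $\norm{\tilde\D}\le\norm{\tilde\D}_{HS}\le d^{K/2}\sqrt{n}$ once $C'$ exceeds $\sqrt{n_0(K)}$.
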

 \begin{remark} \label{r: optimal norm}
  The bound for the norm appearing in Corollary \ref{cor:
  norm-moment} matches that for a random matrix with centered i.i.d.
  entries. This bound is optimal for the row products as well. To
  see it, assume that the entries of $\D_1 \etc \D_K$ are
  independent $\pm 1$ random variables. Then $\norm{\tilde{\D}e_1}_2= d^{K/2}$.
   Also, if $x \in S^{n-1}$ is
  such that $x(j)=n^{-1/2} \tilde{\d}_{1,j}$, where $\tilde{\d}_{1,j}$
  is an entry in the first row of the matrix $\tilde{\D}$, then
  $\norm{\tilde{\D}x}_2 \ge n^{1/2}$.
 \end{remark}

 More precise versions of the moment method show that the
 moment bound of the type of Theorem \ref{t: norm-row product}
 is valid for bigger values of $p$ as
 well, and lead to more precise large deviation bound. We do not
 pursue this direction here, since these bounds are not powerful
 enough for our purposes.

 Instead, we use the previous corollary to bound the median of the
 norm of $\D_1 \rowprod \ldots \rowprod \D_K$, and apply measure
 concentration. The standard tool for deriving measure concentration
 results for norms of random matrices is Talagrand's measure
 concentration theorem for convex functions.
 However, this theorem
 is not available in our context, since the norm of  $\D_1 \rowprod \ldots \rowprod
 \D_K$ is not a convex function of the entries of  $\D_1 \etc \D_K$.
 We will modify this theorem to apply it to polyconvex functions.
 \begin{lemma}       \label{l: polyconvex}
  Consider a function $F: \R^{KM} \to \R$. For $1 \le k \le K$ and
  $x_1 \etc x_{k-1}, x_{k+1} \etc x_K \in \R^M$ define a function
  $f_{x_1 \etc x_{k-1}, x_{k+1} \etc x_K}: \R^M \to \R$ by
  \[
    f_{x_1 \etc x_{k-1}, x_{k+1} \etc x_K}(x)
    =F(x_1 \etc x_{k-1},x, x_{k+1} \etc x_K)
  \]
  Assume that for all $1 \le k \le K$ and for all
  $x_1 \etc x_{k-1}, x_{k+1} \etc x_K \in B_{\infty}^d$ the functions
  $f_{x_1 \etc x_{k-1}, x_{k+1} \etc x_K}$ are $L$-Lipschitz and
  convex.

  Let $(\e_1 \etc \e_K)=\big( (\nu_{1,1} \etc \nu_{1,M}) \etc
   (\nu_{K,1} \etc \nu_{K,M})\big) \in \R^{KM}$ be a set of
  independent random variables, whose absolute values are uniformly
  bounded by $1$. If
  \[
    \P( F(\e_1 \etc \e_K) \ge \mu) \le 2 \cdot 4^{-K},
  \]
  then for any $t>0$
  \[
    \P( F(\e_1 \etc \e_K) \ge \mu+t)
    \le 4^K \exp \left( - \frac{ct^2}{K^2 L^2} \right).
  \]
 \end{lemma}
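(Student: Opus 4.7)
The plan is to prove this by induction on $K$, peeling off one block at a time. The key structural observation is that partial expectation preserves polyconvexity: if $F$ is polyconvex in $K$ blocks with Lipschitz constant $L$, then $\bar F(\e_1, \ldots, \e_{K-1}) := \E_{\e_K}[F(\e_1, \ldots, \e_K)]$ is polyconvex in $K-1$ blocks with the same $L$, since convexity and Lipschitz continuity in each fixed block survive integration of the remaining coordinate against a probability measure.

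The base case $K=1$ is immediate from Talagrand's theorem, since $\P(F \ge \mu) \le 2 \cdot 4^{-1} = 1/2$ forces $\mu$ to dominate the median.

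For the inductive step, I fix $(\e_1, \ldots, \e_{K-1})$ and let $\nu(\e_1, \ldots, \e_{K-1})$ be the conditional median of $F(\e_1, \ldots, \e_{K-1}, \cdot)$ over $\e_K$. Talagrand applied in the last block gives $\P(F \ge \nu + t/K \mid \e_1, \ldots, \e_{K-1}) \le 4 \exp(-t^2/(16 K^2 L^2))$, and integrating out $(\e_1, \ldots, \e_{K-1})$ yields
\[
 \P(F \ge \mu + t) \le \P\bigl(\nu \ge \mu + \tfrac{K-1}{K} t \bigr) + 4\exp\bigl(-\tfrac{t^2}{16K^2L^2}\bigr).
\]
Because $\nu$ itself need not be polyconvex, I then replace it by $\bar F$: integrating the Talagrand tail bound yields the pointwise estimate $|\bar F - \nu| \le CL$, and a short Fubini argument using the defining property of the conditional median (the event $\{\nu(\e_1, \ldots, \e_{K-1}) \ge \mu\}$ forces $\P(F \ge \mu \mid \e_1, \ldots, \e_{K-1}) \ge 1/2$) gives $\P(\nu \ge \mu) \le 2 \P(F \ge \mu) \le 4^{-(K-1)}$. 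Consequently $\P(\bar F \ge \mu + CL) \le 2 \cdot 4^{-(K-1)}$, which is exactly the hypothesis of the lemma applied to $\bar F$ at level $\mu + CL$.

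Applying the inductive hypothesis to $\bar F$ produces $\P(\bar F \ge \mu + CL + s) \le 4^{K-1} \exp(-cs^2/((K-1)^2 L^2))$ for every $s > 0$. Substituting $s = \tfrac{K-1}{K} t - 2CL$ and combining with the conditional Talagrand bound above yields $\P(F \ge \mu + t) \le 4^K \exp(-c't^2/(K^2 L^2))$ once $t$ exceeds an $O(KL)$ threshold; for smaller $t$ the stated bound already exceeds $1$ and holds trivially. The main technical point is the preservation of polyconvexity under partial expectation, which is what allows the induction to close; the book-keeping of the additive $CL$ corrections and the factor-of-$2$ loss from the Markov step together account for the $4^K$ prefactor and the slightly worse constant in the exponent.
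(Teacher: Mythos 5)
Your argument is correct, but it follows a genuinely different route from the paper's. The paper also inducts on $K$, but instead of recursing on the partial expectation it works with exceptional sets: it introduces the set $\Omega$ of values of $(\e_1 \etc \e_{K-1})$ for which the conditional probability of exceeding $\mu$ is at least $1/2$ (small by Chebychev), applies Talagrand conditionally off $\Omega$ with deviation $t/K$, then introduces a second exceptional set $\Xi$ of values of $x_K$ and applies the induction hypothesis to the polyconvex function $(\e_1 \etc \e_{K-1}) \mapsto F(\e_1 \etc \e_{K-1},x_K)$ for each fixed $x_K \notin \Xi$, with deviation $\frac{K-1}{K}t$. Because the budget is split exactly as $t/K + \frac{K-1}{K}t$ and $\big(\frac{K-1}{K}t\big)^2/(K-1)^2 = t^2/K^2$, the constant $c$ passes through the paper's induction unchanged. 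Your substitute for this --- recursing on $\bar F = \E_{\e_K} F$, which is legitimately polyconvex and $L$-Lipschitz in the remaining blocks since averaging over $\e_K \in [-1,1]^M$ preserves both properties, together with $|\bar F - \nu| \le CL$ from integrating the Talagrand tail and the Fubini/median bound $\P(\nu \ge \mu) \le 2\P(F \ge \mu)$ --- buys a cleaner induction object (no exceptional sets, and the recursed function is canonical), but it costs the additive $O(L)$ corrections, hence the $O(KL)$ threshold and a constant that degrades by a bounded factor at each of the $K$ steps; to make the induction close formally you should state the hypothesis with a $K$-dependent constant $c_K$ (e.g. $c_K \le \min(c_{K-1}/4, 1/16)$, also small enough that the bound is trivial below the threshold). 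Under the paper's standing convention that constants may depend on $K$, and in the application where $K$ is fixed, $L = d^{(K-1)/2}$ and $t \asymp d^{K/2}+n^{1/2} \gg KL$, this loss is immaterial, so your proof is a valid alternative; the only other points to spell out are a measurable choice of the conditional median $\nu$ and the observation in the base case that $\P(F \ge \mu) \le 1/2$ puts $\mu$ at or above a median, so Talagrand applies directly.
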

 \begin{proof}
  We prove this lemma by induction on $K$. In case $K=1$ the
  assertion of the lemma follows immediately from Talagrand's
  measure concentration theorem for convex functions.

  Assume that the lemma holds for $K-1$.
  Let $F: \R^{KM} \to \R$ be a function satisfying the assumptions
  of the lemma.
   Set
  \[
   \Om=\{(x_1 \etc x_{K-1})\in B_{\infty}^{(K-1)M} \mid \P (F(x_1 \etc x_{K-1},\e_K) >\mu) \ge 1/2
   \}.
  \]
  Then Chebychev's inequality yields
  \begin{equation}\label{Om}
    \P ((\e_1 \etc \e_{K-1}) \in\Om) \le 4^{-(K-1)}.
  \end{equation}
  By
  Talagrand's theorem, for any $(x_1 \etc x_{K-1}) \in B_{\infty}^{(K-1)M} \setminus \Om$
  \[
   \P \left(F(x_1 \etc x_{K-1}, \e_K) \ge \mu+\frac{t}{K} \right)
   \le 2 \exp \left(- \frac{ct^2}{K^2 L^2} \right).
  \]
  Hence,
  \begin{multline*}
   \P \left(F(\e_1 \etc \e_{K}) \ge \mu+\frac{t}{K} \mid (\e_1 \etc \e_{K-1})
   \in B_{\infty}^{(K-1)M} \setminus \Om\right)  \\
   \le 2 \exp \left(- \frac{ct^2}{K^2 L^2} \right).
  \end{multline*}
  Define
  \begin{multline*}
    \Xi= \Big\{x_K \in B_{\infty}^M  \mid
       \P \Big (F(\e_1 \etc \e_{K-1},x_K) \ge
    \mu+\frac{t}{K}\mid \\
      (\e_1 \etc \e_{K-1}) \in B_{\infty}^{(K-1)M} \setminus \Om \Big)  > 4^{-(K-1)} \Big\}.
  \end{multline*}
  The previous estimate and Chebychev's inequality imply
  \[
    \P(\e_K \in \Xi) \le 2 \cdot 4^{K-1} \exp \left(- \frac{ct^2}{K^2 L^2} \right).
  \]
  If $x_K \in \Xi^c$, then combining the conditional probability bound with the estimate \eqref{Om}, we obtain
  \begin{align*}
    &\P(F(\e_1 \etc \e_{K-1},x_K) \ge \mu+\frac{t}{K}) \\
    &\le \P \Big (F(\e_1 \etc \e_{K-1},x_K) \ge
    \mu+\frac{t}{K} \mid  (\e_1 \etc \e_{K-1}) \in B_{\infty}^{(K-1)M} \setminus \Om \Big)
    + \P(\Om)  \\
     &\le 2 \cdot  4^{-(K-1)}.
  \end{align*}
  Hence, applying the induction hypothesis with $\frac{K-1}{K}t$ in place of $t$, we get
  \[
    \P \left(F(\e_1 \etc \e_{K-1},x_K) \ge \mu+\frac{t}{K} +\frac{K-1}{K}t \right)
    \le 4^{K-1}  \exp \left( - \frac{ct^2}{K^2 L^2} \right).
  \]
  Finally,
  \begin{align*}
    &\P( F(\e_1 \etc \e_K) \ge \mu+t) \\
    &\le \P( F(\e_1 \etc \e_K) \ge \mu+t \mid \e_K \in B_{\infty}^M \setminus \Xi) +
    \P(\e_K \in \Xi) \\
    &\le 4^K \exp \left( - \frac{ct^2}{K^2 L^2} \right),
  \end{align*}
  which completes the proof of the induction step.
 \end{proof}

 This concentration inequality
 combined with Corollary \ref{cor: norm-moment}
 allows to establish the correct probability bound for large
 deviations of the norm of the row product of random matrices.

 \begin{proof}[Proof of Theorem \ref{t: norm-row product}]
  For $k=1 \etc K$ let $\e_k \in \R^{dn}$ be the entries of the
  matrix $\D_k$ rewritten as a vector. For any matrices $\D_1 \etc
  \D_{k-1}$, $\D_{k+1} \etc \D_K$ the function
  \[
    f_{\D_1 \etc \D_{k-1}, \D_{k+1} \etc \D_K}(\D_k)=
  \norm{\D_1 \rowprod \ldots \rowprod \D_{k-1} \rowprod \D_k \rowprod \D_{k+1} \rowprod \ldots \rowprod
  \D_K}
  \]
  is convex. Also, since the absolute values of the entries of the
  matrices $\D_1 \etc
  \D_{k-1}, \D_{k+1} \etc \D_K$ do not exceed 1,
  \begin{align*}
    &|f_{\D_1 \etc \D_{k-1}, \D_{k+1} \etc \D_K}(\D_k)
       -f_{\D_1 \etc \D_{k-1}, \D_{k+1} \etc \D_K}(\D_k')| \\
    &\le  \norm{\D_1 \rowprod \ldots \rowprod \D_{k-1} \rowprod (\D_k-\D_k') \rowprod
           \D_{k+1} \rowprod \ldots \rowprod  \D_K} \\
    &\le  \norm{\D_1 \rowprod \ldots \rowprod \D_{k-1} \rowprod (\D_k-\D_k') \rowprod
           \D_{k+1} \rowprod \ldots \rowprod  \D_K}_{HS} \\
    &\le  d^{(K-1)/2} \norm{\D_k-\D_k'}_{HS},
  \end{align*}
  so the Lipschitz constant of this function  doesn't
  exceed $d^{(K-1)/2}$. By Corollary \ref{cor: norm-moment}, we can
  take $\mu= C' (d^{K/2}+n^{1/2})$.
         Applying Lemma \ref{l: polyconvex}
  with $t=C'' (d^{K/2}+n^{1/2})$ finishes the proof.
 \end{proof}
 \begin{remark}  \label{r: optimal probability}
  The probability bound of Theorem \ref{t: norm-row product} is
  optimal.  Indeed, assume first that $d^K \ge n$, and let $\D_1 \etc \D_K$
  be $d \times n$ matrices with independent random $\pm 1$ variables.
  Choose a number $s \in \N$ such that $\sqrt{s}> C$, where $C$ is
  the constant in Theorem \ref{t: norm-row product}, and set
  $x=(e_1+ \ldots +e_s)/\sqrt{s}$.
   With probability
  $2^{-sK \cdot d}$ all entries in the first $s$ columns of these matrices equal
  1, so $\norm{(\D_1 \rowprod \ldots \rowprod \D_K)x}_2 =\sqrt{s} \cdot
  d^{K/2}$.

  In the opposite case, $n> d^K$, set $s=C^2 n/d^K$, where the constant $C$ is the same as above. Then for $x$ defined above we have  $\norm{(\D_1 \rowprod \ldots \rowprod \D_K)x}_2 =\sqrt{s} \cdot
  d^{K/2}= C \sqrt{n}$ with probability at least $2^{-sK \cdot d}=\exp (C' K n / d^{K-1})$.
 \end{remark}

  \subsection{Norms of the submatrices}
 We start with two
 deterministic lemmas. The first
 one is a trivial bound for the norm of the row product of two
 matrices.
 \begin{lemma} \label{l: norm-blocks-trivial}
  Let $U$ be an $M \times n$ matrix, and let $V$ be a $d \times n$ matrix.
  Assume that $|v_{i,j}| \le 1$ for all entries of the matrix $V$.
  Then $\norm{U \rowprod V} \le \sqrt{d} \norm{U}$.
 \end{lemma}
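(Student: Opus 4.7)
The plan is to expand $\|(U \rowprod V)x\|_2^2$ directly using the definition of the row product and bound the result row-block by row-block. Fix an arbitrary $x \in S^{n-1}$. Since the row of $U \rowprod V$ indexed by $(j,k)$ equals $U_j \rowprod V_k$, the corresponding entry of $(U \rowprod V)x$ is
\[
  \sum_{l=1}^n U_{j,l}\, V_{k,l}\, x(l) = \bigl(U\, \tilde{x}^{(k)}\bigr)_j,
\]
where $\tilde{x}^{(k)} \in \R^n$ is the entry-wise product of the $k$-th row of $V$ with $x$, i.e.\ $\tilde{x}^{(k)}(l) = V_{k,l} \, x(l)$.

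Summing first over $j$ (the rows of $U$) and then over $k$ gives
\[
  \norm{(U \rowprod V)x}_2^2
   = \sum_{k=1}^d \norm{U \tilde{x}^{(k)}}_2^2
   \le \norm{U}^2 \sum_{k=1}^d \norm{\tilde{x}^{(k)}}_2^2.
\]
The hypothesis $|v_{i,j}| \le 1$ immediately yields
\[
  \norm{\tilde{x}^{(k)}}_2^2 = \sum_{l=1}^n V_{k,l}^2\, x(l)^2 \le \sum_{l=1}^n x(l)^2 = 1,
\]
so $\sum_{k=1}^d \norm{\tilde{x}^{(k)}}_2^2 \le d$. Combining these inequalities gives $\norm{(U \rowprod V)x}_2^2 \le d\, \norm{U}^2$, and taking a supremum over $x \in S^{n-1}$ yields the claim. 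There is no real obstacle here: the argument is a one-line reindexing followed by two uses of the pointwise bound on $V$, which is exactly why the lemma is labeled ``trivial.''
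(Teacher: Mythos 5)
Your proof is correct and is essentially the paper's argument: both decompose $(U \rowprod V)x$ into $d$ blocks indexed by the rows of $V$, observe that each block is $U$ applied to the entry-wise product of that row with $x$ (a vector of norm at most $\norm{x}_2$ by the bound $|v_{i,j}| \le 1$), and sum the squared block norms. The only cosmetic difference is that you fix $x$ throughout and take the supremum at the end, while the paper states the block bound at the operator-norm level.
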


 \begin{proof}
  The matrix $U \rowprod V$ consists of $d$ blocks $U \rowprod v_j, \
  j=1\etc d$, where $v_j$ is a row of $V$. For any $x \in \R^M$
  \[
   \norm{(U \rowprod v_j)x}_2 = \norm{U (v_j \rowprod x^T)^T}_2
   \le \norm{U} \cdot \norm{v_j \rowprod x^T}_2 \le \norm{U} \cdot \norm{
   x}_2.
  \]
  Hence,
  $
   \norm{U \rowprod V}^2 \le \sum_{j=1}^d \norm{U \rowprod v_j}^2 \le d
   \norm{U}^2.
  $
 \end{proof}

 The second lemma is based on the block decomposition of the
 coordinates of a vector.
 \begin{lemma}             \label{l: norm-blocks}
  Let $T: \R^n \to \R^m$ be a linear operator. Set $L=\lceil (1/4)
  \log_2 n \rceil$ and let $1 \le L_0 <L$.
  For $l=1 \etc L$ denote
  \[
   \mathcal{M}_l
   =\{x \in B_2^n \mid |\supp(x)|\le 4^l, \text{\rm and } x(j) \in \{0,
   2^{-l}, -2^{-l} \} \ \text{\rm for all }j \}.
  \]
  Let $b \le 2^{-L_0}$. Then
  \[
    \norm{T: B_2^n \cap b B_{\infty}^n \to B_2^m}
    \le \sqrt{5}
     \left( \sum_{l=L_0}^{L} \max_{z \in \mathcal{M}_l} \norm{T z}_2^2 \right)^{1/2}.
  \]
 \end{lemma}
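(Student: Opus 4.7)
The plan is to reduce the continuous operator norm of $T$ on $B_2^n \cap bB_\infty^n$ to a discrete sum over the finite sets $\mathcal{M}_l$ by decomposing each admissible $x$ via the block construction of Lemma \ref{l: block decomposition}. Fix $x \in B_2^n \cap b B_\infty^n$. Because $b \le 2^{-L_0}$ gives $4^{L_0} \le b^{-2}$, I apply Lemma \ref{l: block decomposition} with blocks of type $4^{L_0}$, obtaining $I_0, I_1, \ldots$ with $|I_m| = 4^{m+L_0}$ and the mass inequality $\sum_{m \ge 0} |I_m|\,\norm{x|_{I_m}}_\infty^2 \le 5$.

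For each block set $y_m = x|_{I_m}$ and $\alpha_m = |I_m|^{1/2}\norm{y_m}_\infty$; the rescaled vector $\tilde z_m = y_m/\alpha_m$ is supported in $I_m$ with $\norm{\tilde z_m}_\infty \le 2^{-(m+L_0)}$. Since $|I_m| = 4^{m+L_0}$ matches the support threshold of $\mathcal{M}_{m+L_0}$, the vector $\tilde z_m$ lies in the convex hull of the vertices of $[-2^{-(m+L_0)}, 2^{-(m+L_0)}]^{I_m}$, each of which belongs to $\mathcal{M}_{m+L_0}$; hence $\norm{T\tilde z_m}_2 \le \max_{z \in \mathcal{M}_{m+L_0}} \norm{Tz}_2$. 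Writing $x = \sum_m y_m$ and combining the triangle inequality with Cauchy--Schwarz yields
\[
  \norm{Tx}_2 \le \sum_m \alpha_m \max_{z \in \mathcal{M}_{m+L_0}} \norm{Tz}_2 \le \Bigl(\sum_m \alpha_m^2\Bigr)^{1/2} \Bigl(\sum_{l \ge L_0} \max_{z \in \mathcal{M}_l} \norm{Tz}_2^2\Bigr)^{1/2} \le \sqrt{5}\,\Bigl(\sum_{l \ge L_0} \max_{z \in \mathcal{M}_l} \norm{Tz}_2^2\Bigr)^{1/2}.
\]

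The main technical obstacle is truncating the outer sum at $l = L$. Blocks indexed by $m > L - L_0$ have $\ell_\infty$-norm at most $2^{-L}$ but support exceeding $4^L$, so they lie outside $\conv(\mathcal{M}_l)$ for any $l \le L$. I would aggregate them into a tail $x_T$ and further partition $\supp(x_T)$ into chunks of size $4^L$, each of which lies in $\conv(\mathcal{M}_L)$. Because the $s$-th chunk in decreasing-magnitude order has $\ell_\infty$-norm at most $2^{-L}/\sqrt{s}$ by the $\ell_2$-budget $\norm{x_T}_2 \le 1$, its contribution to $\norm{Tx_T}_2$ is at most $(1/\sqrt{s}) \max_{z \in \mathcal{M}_L} \norm{Tz}_2$. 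The choice $L = \lceil (1/4) \log_2 n\rceil$ caps the total number of chunks at $n/4^L = O(n^{1/2})$, and a careful Cauchy--Schwarz argument against the $1/\sqrt{s}$ weights should show that the tail contributes at most a constant multiple of $\max_{z \in \mathcal{M}_L} \norm{Tz}_2$, which is absorbed into the $l = L$ term on the right-hand side.
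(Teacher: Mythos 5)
Your first two paragraphs reproduce the paper's proof exactly: blocks of type $4^{L_0}$, rescaling each block into $\conv(\mathcal{M}_{L_0+m})$, then the triangle inequality and Cauchy--Schwarz against the weights $|I_m|\,\norm{x|_{I_m}}_\infty^2$, whose sum Lemma \ref{l: block decomposition} bounds by $5$. The paper stops there, because it takes the blocks $I_0,\ldots,I_{L-L_0}$ to exhaust all coordinates of $x$: their total length is at least $4^L$, so this is legitimate precisely when $4^L\ge n$, i.e.\ when $L$ is read as $\lceil\frac12\log_2 n\rceil$ rather than $\lceil\frac14\log_2 n\rceil$. That is also how the lemma is used in the proof of Lemma \ref{l: norm}, where the sum runs up to $L'$ with $4^{L'-1}<|J|\le 4^{L'}$; the ``$1/4$'' in the statement is a misprint, and with the corrected $L$ there is no tail to handle at all.

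The tail paragraph, however, contains a genuine gap. With $L=\lceil\frac14\log_2 n\rceil$ taken literally, the tail splits into up to $n/4^L\sim n^{1/2}$ chunks, and your own estimate gives a contribution of order $\sum_{s}s^{-1/2}\max_{z\in\mathcal{M}_L}\norm{Tz}_2\sim n^{1/4}\max_{z\in\mathcal{M}_L}\norm{Tz}_2$; no Cauchy--Schwarz rearrangement can bring this down to a constant multiple, because the truncated inequality you are trying to prove is false. Take $m=1$, $Tx=\sum_{j}x(j)$, $b=2^{-L_0}$, and $x=n^{-1/2}(1,\ldots,1)$: the left-hand side is $\sqrt n$, while $\max_{z\in\mathcal{M}_l}|Tz|=4^l\cdot 2^{-l}=2^l$, so the right-hand side is of order $\bigl(\sum_{l\le L}4^l\bigr)^{1/2}\sim 2^L\sim n^{1/4}$. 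So the tail cannot be ``absorbed into the $l=L$ term'' (and even if it could with some absolute constant, the stated constant $\sqrt5$ would be lost). The correct move is not a cleverer tail estimate but the corrected definition of $L$ (equivalently, running the sum until $4^L\ge n$), after which your first two paragraphs already constitute the whole proof, identical to the paper's.
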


 \begin{proof}
  Let $x \in B_2^n \cap b B_{\infty}^n$.
   Let $I_0, I_1 \etc I_{L-L_0}$ be blocks of type $4^{L_0}$ of coordinates of
   $x$. Recall that $|I_m|=4^{L_0+m}$.
  If $x_m \neq 0$, set
  \[
    y_m=|I_m|^{-1/2} \cdot \frac{x|_{I_m}}{\norm{x|_{I_m}}_{\infty}},
  \]
   otherwise
  $y_m=0$. Then $\norm{y_m}_{\infty} \le |I_m|^{-1/2}= 2^{-L_0-m}$,
  and $\norm{y_m}_2 \le 1$,
  so $y_m \in \conv (\mathcal{M}_{L_0+m})$ for all $m$. By
  Cauchy--Schwartz inequality,
  \begin{align*}
    \norm{Tx}_2
    &\le \sum_{m=0}^{L-L_0} \norm{T x|_{I_m}}_2
    \le \left( \sum_{m=0}^{L-L_0} |I_m| \cdot \norm{x|_{I_m}}_{\infty}^2
      \right)^{1/2} \cdot
    \left( \sum_{m=0}^{L-L_0} \norm{Ty_m}_2^2 \right)^{1/2} \\
    &\le \left( \sum_{m=0}^{L-L_0} |I_m| \cdot \norm{x|_{I_m}}_{\infty}^2
    \right)^{1/2} \cdot
      \left( \sum_{m=0}^{L-L_0}
         \max_{z \in \mathcal{M}_{L_0+m}} \norm{T z}_2^2 \right)^{1/2}.
  \end{align*}
  The estimate of Lemma \ref{l: block decomposition} completes the proof.
 \end{proof}

 For $k \in \N$ denote by $\WW_k$ the set of all $d^k \times n$
matrices $V$ satisfying
\begin{equation}                                                    \label{d: W_k}
  \norm{V|_J} \le C_k \left(d^{k/2}+\sqrt{|J|} \cdot
  \log^{k/2} \left(\frac{e n}{|J|}\right) \right).
\end{equation}
for all non-empty subsets $J \subset \{1 \etc n\}$. Here $V|_J$
denotes the submatrix of $V$ with columns belonging to $J$, and
$C_k$ is a constant depending on $k$ only. This definition obviously
depends on the choice of the constants $C_k$. These constants will
be defined inductively in the proof of Lemma \ref{l: norm} and then
fixed for the rest of the paper.

 We will prove that the row product of random matrices satisfies
 condition \eqref{d: W_k} with high probability. To this end we need
 an estimate of the norm of a vector consisting of i.i.d. blocks of
 coordinates.

\begin{lemma}  \label{l: point-norm}
  Let $W$ be an $m \times n$ matrix. Let $\theta \in \R^n$ be a vector with
  independent $\d$ random coordinates.
   For $l \in \N$ let $Y_1 \etc Y_l$
  be independent copies of the random variable $Y=\norm{W \theta}$. Then
  for any $s>0$
  \[
    \P \left( \sum_{j=1}^l Y_j^2 \ge 4 l \norm{W}_{HS}^2 +s \right)
    \le 2^l \cdot \exp \left( -\frac{c s}{ \norm{W}^2 } \right).
  \]
\end{lemma}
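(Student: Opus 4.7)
The plan is to combine Talagrand's convex concentration inequality with a Chernoff-type bound on a sum of i.i.d.\ sub-exponential random variables. The first observation is that the map $\theta \mapsto \norm{W\theta}$ is convex and satisfies
\[
  \bigl| \norm{Wx} - \norm{Wy} \bigr| \le \norm{W(x-y)} \le \norm{W}\cdot\norm{x-y}_2,
\]
so it is $\norm{W}$-Lipschitz. Applying Talagrand's theorem (the t-theorem stated in Section~\ref{s: notation}) with $L = \norm{W}$ yields, for the common median $M$ of $Y_1 \etc Y_l$,
\[
  \P\bigl(|Y_j - M| \ge t\bigr) \le 4\exp\!\left(-\frac{t^2}{16\norm{W}^2}\right).
\]

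Next, I would bound the median via Markov's inequality. Independence and centering of the coordinates of $\theta$, together with $|\theta(k)| \le 1$, give
\[
  \E Y_j^2 = \sum_{i,k} w_{i,k}^2\, \E\theta(k)^2 \le \norm{W}_{HS}^2,
\]
hence $M^2 \le 2\E Y_j^2 \le 2\norm{W}_{HS}^2$. The pointwise bound $Y_j^2 \le 2M^2 + 2(Y_j - M)^2$ then gives
\[
  \sum_{j=1}^l Y_j^2 \le 4l\,\norm{W}_{HS}^2 + 2\sum_{j=1}^l (Y_j - M)^2,
\]
so the probability in the lemma is at most $\P\bigl(\sum_{j=1}^l (Y_j - M)^2 \ge s/2\bigr)$.

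It remains to estimate this last probability. Set $Z_j := (Y_j - M)^2/\norm{W}^2$; the Talagrand bound translates into $\P(Z_j \ge t) \le 4 e^{-t/16}$ for every $t \ge 0$, a tail estimate that is independent of $W$. Integration by parts shows that $\E e^{\lambda Z_j}$ is finite for every $\lambda < 1/16$, and an absolute constant $\lambda_0 > 0$ can be chosen small enough that $\E e^{\lambda_0 Z_j} \le 2$. Since the $Z_j$ are independent and identically distributed, Chernoff's inequality yields
\[
  \P\!\left(\sum_{j=1}^l Z_j \ge u \right) \le 2^l \exp(-\lambda_0 u);
\]
substituting $u = s/(2\norm{W}^2)$ produces the claim with $c = \lambda_0/2$.

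The only mildly delicate step is the choice of $\lambda_0$: it must be small enough that the single-variable mgf stays below $2$, while still delivering a non-trivial exponential decay rate. This involves splitting the integral for $\E e^{\lambda Z_j}$ at the threshold where the Talagrand tail bound becomes informative (i.e.\ where $4 e^{-t/16} \le 1$) and optimising over $\lambda$; once this routine estimate is in place, the proof is a transparent concatenation of Talagrand's inequality, Markov's inequality, and Chernoff's bound. The price paid for the median-decoupling via $Y_j^2 \le 2M^2 + 2(Y_j-M)^2$ is precisely the factor $2^l$, which is acceptable since in later applications $l$ will be small compared to $s/\norm{W}^2$.
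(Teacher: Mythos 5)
Your proof is correct and follows essentially the same route as the paper: Talagrand's convex concentration applied to the $\norm{W}$-Lipschitz map $\theta \mapsto \norm{W\theta}$, the bound $M^2 \le 2\E Y^2 \le 2\norm{W}_{HS}^2$ via Markov, the elementary inequality $Y_j^2 \le 2M^2 + 2(Y_j-M)^2$, and tensorization through the exponential moment of the sub-exponential variables $(Y_j-M)^2/\norm{W}^2$ (the paper phrases this as $|Y_j-M|$ being $\psi_2$ and applies Chebyshev to the exponentiated sum, which is the same Chernoff computation). No gaps; the constant bookkeeping and the source of the factor $2^l$ match the paper's argument.
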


\begin{proof}
 Note that $F: \R^n \to \R, \ F(x)= \norm{W x}$ is a Lipschitz
 convex function with
 the Lipschitz constant $\norm{W}$. By Talagrand's theorem
 \[
   \P ( |Y - M| \ge t)
   \le 4 \exp \left( - \frac{t^2}{16 \norm{W}^2} \right),
 \]
 where $M= \mathbb{M}(Y)$ is the median of $Y$.
 For $j=1 \etc l$ set $Z_j=|Y_j-M|$. Then the previous
 inequality means that $Z_j$ is a $\psi_2$ random variable, i.e.
 \[
   \E \exp \left( \frac{c' Z_j^2}{\norm{W}^2} \right) \le 2
 \]
 for some constant $c'>0$.
 By the Chebychev inequality and independence of $Z_1 \etc Z_l$,
 \[
   \P \left( \sum_{j=1}^l Z_j^2 >t \right)
   =\P \left( \frac{c'}
         {\norm{W}^2} \sum_{j=1}^l Z_j^2 > \frac{c' t}{\norm{W}^2}
       \right)
   \le 2^l \cdot \exp \left( - \frac{c' t}{\norm{W}^2} \right).
 \]
 Using the elementary inequality $x^2 \le 2 (x-a)^2+2a^2$, valid for
 all $x,a \in \R$, we derive that
 \[
   \P \left( \sum_{j=1}^l Y_j^2 >2 l M^2 +2t \right)
   \le \P \left( \sum_{j=1}^l Z_j^2 >t \right)
   \le 2^l \cdot \exp \left( - \frac{c' t}{\norm{W}^2} \right).
 \]
 By Markov's inequality, $M^2 =\mathbb{M}(Y^2) \le 2 \E Y^2$.
 To finish the proof, notice that since the coordinates of $\theta$
 are independent,
 \[
    \E Y^2
   = \sum_{j=1}^m \sum_{k=1}^n w_{j,k}^2 \cdot \E \theta_k^2 \le \norm{W}_{HS}^2.
 \]
\end{proof}

 The next lemma shows that a ``typical'' row product of random
 matrices satisfies \eqref{d: W_k}.
\begin{lemma}                                               \label{l: norm}
 Let $d,n, k \in \N$ be numbers satisfying $n \ge d^{k+1/2}$.
 Let $\D_1 \etc \D_k$ matrices with independent $\d$ random entries.
 There exist numbers $C_1 \etc C_k>0$ such that
 \[
   \P (\D_1 \rowprod \ldots \rowprod \D_k \notin \WW_k) \le k e^{-cd}.
 \]
\end{lemma}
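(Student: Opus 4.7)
The plan is to induct on $k$, using Talagrand's inequality and the moment method for the base case, and Lemma \ref{l: point-norm} together with the block-decomposition machinery of Lemmas \ref{l: norm-blocks-trivial} and \ref{l: norm-blocks} for the induction step.

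For the base case $k=1$, the map $\D\mapsto\|\D|_J\|$ is a convex $1$-Lipschitz function of the entries of $\D_1$, whose median is $O(\sqrt d+\sqrt{|J|})$ by the moment method (a specialization of Theorem \ref{t: norm-moment} to $K=1$). Talagrand's concentration inequality therefore yields
\[
\P\big(\|\D_1|_J\|>C(\sqrt d+\sqrt{|J|})+t\big)\le 2\exp(-ct^2)
\]
for every $J$. Choosing $t$ of order $\sqrt{d+|J|\log(en/|J|)}$, union-bounding over the $\binom{n}{|J|}$ subsets of each size, and summing over $|J|\in\{1,\ldots,n\}$ gives $\P(\D_1\notin\WW_1)\le e^{-cd}$ once $C_1$ is chosen large enough; the $\log^{1/2}(en/|J|)$ factor built into the definition of $\WW_1$ is exactly what is needed to absorb the combinatorial entropy $\log\binom{n}{|J|}$.

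For the induction step, fix $k\ge 2$, write $A:=\D_1\rowprod\cdots\rowprod\D_{k-1}$, and condition on the event $\{A\in\WW_{k-1}\}$, which by the induction hypothesis has probability at least $1-(k-1)e^{-cd}$. Let $\theta^{(1)},\ldots,\theta^{(d)}$ be the rows of $\D_k$. For $J\subset\{1,\ldots,n\}$ of size $j$ and any unit $x$ supported on $J$, the identity
\[
\big\|(A\rowprod\D_k)|_J\,x\big\|_2^2=\sum_{i=1}^d\big\|W_x\,(\theta^{(i)}|_J)\big\|_2^2,\qquad W_x:=(A|_J)\,\mathrm{diag}(x),
\]
puts us in the setting of Lemma \ref{l: point-norm} with $\|W_x\|_{HS}^2\le\|A|_{\supp x}\|^2$ and $\|W_x\|\le\|x\|_\infty\|A|_J\|$, both controlled by $A\in\WW_{k-1}$. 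The argument then splits by the size $j$. In the thin regime $j\le ne^{-cd}$, the trivial bound $\|(A\rowprod\D_k)|_J\|\le\sqrt d\,\|A|_J\|$ of Lemma \ref{l: norm-blocks-trivial}, combined with the inductive estimate on $\|A|_J\|$, already places the matrix in the $\WW_k$ range because $\sqrt d\lesssim\log^{1/2}(en/j)$ there. In the thick regime $j>ne^{-cd}$ we apply Lemma \ref{l: norm-blocks} to reduce $\|(A\rowprod\D_k)|_J\|^2$ to $\sum_{l=L_0}^L\max_z\|(A\rowprod\D_k)|_J\,z\|^2$ over $z\in\mathcal{M}_l$ supported in $J$ (the residual vectors with a few coordinates above $2^{-L_0}$ are handled separately), invoke Lemma \ref{l: point-norm} for each such $z$, and union-bound over the $\le\binom{j}{4^l}2^{4^l}$ choices of $z$, all $l$, and all subsets $J$.

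The main obstacle is the bookkeeping of constants through the induction. One must verify that the exponent $c\cdot 4^l s/\|A|_J\|^2$ in Lemma \ref{l: point-norm} can be made to dominate simultaneously the net entropy $4^l\log(ej/4^l)$, the subset entropy $j\log(en/j)$, the $2^d$ penalty from the lemma, and the level count $\log L$, while keeping $s$ small enough that the leading term $\sum_l 4d\|W_z\|_{HS}^2\le\sum_l 4C_{k-1}^2\,d\big(d^{k-1}+4^l\log^{k-1}(en/4^l)\big)$ is absorbed by $C_k^2(d^k+j\log^k(en/j))$. The assumption $n\ge d^{k+1/2}$ ensures both that the thin/thick cutoff lies safely in the trivial regime and that the various logarithmic losses sum to a $k$-dependent constant. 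Choosing $C_k$ sufficiently large relative to $C_{k-1}$ then gives failure probability $\le e^{-cd}$ at step $k$, and the union bound over the $k$ induction steps yields the factor $k$.
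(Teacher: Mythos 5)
Your base case (Talagrand concentration around a median obtained from the moment method) is fine and essentially equivalent to the paper's Hoeffding-plus-net argument, and your overall strategy for the induction step (condition on $A=\D_1\rowprod\cdots\rowprod\D_{k-1}\in\WW_{k-1}$, write the quadratic form as $d$ independent copies controlled by Lemma \ref{l: point-norm}, use Lemma \ref{l: norm-blocks-trivial} for small supports and the sets $\mathcal{M}_l$ with Lemma \ref{l: norm-blocks} for flat vectors) is the paper's. But your two-regime split leaves a genuine gap in the middle range. The thin regime $j\le ne^{-cd}$ charges the trivial bound $\sqrt d\,\norm{A|_J}$ against the term $\sqrt j\log^{k/2}(en/j)$ of \eqref{d: W_k}, which indeed needs $d\lesssim\log(en/j)$; but for $n\asymp d^{k+1/2}$ (which the hypothesis allows) this regime is essentially empty. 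The trivial bound can instead be charged against the $d^{k/2}$ term, but only for $j\log^{k-1}(en/j)\le d^{k-1}$, i.e. $j\le m_{k-1}$ in the paper's notation. For intermediate supports $m_{k-1}<j<m_k$ (where $m_k\approx d^k/\log^k(en/m_k)$) neither of your mechanisms works: the trivial bound overshoots, while the level-by-level $\mathcal{M}_l$ argument pays the deterministic term $4d\norm{W_z}_{HS}^2\approx 4d^k$ of Lemma \ref{l: point-norm} once per level, so Lemma \ref{l: norm-blocks} returns only $\norm{W|_J}^2\lesssim d^k\log d$ after summing over the $\sim\log d$ levels between $m_{k-1}$ and $m_k$ --- exactly the parasitic logarithm this lemma exists to remove --- and at small levels the exponent $\sim 4^l\log(en4^{-l})$ cannot even absorb the $2^d$ factor of Lemma \ref{l: point-norm}. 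The paper needs a separate third case here: a single $(1/2)$-net over $E_k=\{z:\ |\supp(z)|\le m_k,\ \norm{z}_\infty\le m_{k-1}^{-1/2}\}$, one application of Lemma \ref{l: point-norm} with $s=td^k$ (so the $4d\cdot d^{k-1}$ term appears once), and, crucially, the bound $\norm{(U\rowprod z^T)}^2\lesssim\log^{k-1}(en)$ obtained by block-decomposing $z$ itself via Lemma \ref{l: block decomposition}, not the cruder $\norm{z}_\infty^2\norm{U|_J}^2$ you propose, which is larger by a factor about $d/\log$ and would not beat the net entropy $m_k\log(en)$; this is also the step where $n\ge d^{k+1/2}$ is genuinely used.

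A second problem is your stated requirement that the exponent from Lemma \ref{l: point-norm} dominate ``the subset entropy $j\log(en/j)$,'' together with a union bound over all subsets $J$. This cannot be afforded: keeping the final norm within \eqref{d: W_k} forces $s(l)$ to be of order $4^l\log^{k}(en4^{-l})$, so the available exponent at level $l$ is of order $4^l\log(en4^{-l})$, which is far below $j\log(en/j)$ when $4^l\ll j$. It is also unnecessary: the events depend only on the vectors $z\in\mathcal{M}_l$, not on $J$, so the correct structure (the paper's) is to prove one event, uniform over all $z\in\mathcal{M}_l$ and all admissible $l$, at cost $\binom{n}{4^l}3^{4^l}$ per level, and then apply Lemma \ref{l: norm-blocks} to every $W|_J$ deterministically on that event. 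As written, your plan either pays an entropy it cannot beat or tacitly requires this restructuring.
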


\begin{proof}
 We use the induction on $k$.
 \begin{step}
  Let $k=1$. In this case $\D_1$ is a matrix with independent $\d$ random  entries.
  For such matrices the result is standard and follows from an easy
  covering argument.
  Let $x \in S^{d-1}$, and let $ y \in S^{n-1} \cap \R^J$. Then
  $\pr{x}{\D_1|_J y}$ is a linear combination of independent $\d$ random variables.
  By Hoeffding's inequality (see e.g. \cite{VW}),
  \[
    \P (|\pr{x}{\D_1|_J y}|>t) \le e^{-ct^2}
  \]
  for any $t \ge 1$.
  Let $J \subset \{1 \etc n\}, \ |J|=m$. Let $\mathcal{N}$ be a
  $(1/2)$-net in $S^{d-1}$, and let $\mathcal{M}$ be a $(1/2)$-net in
  $S^{n-1} \cap \R^J$. Then
  \[
    \norm{\D_1|_J} \le 4 \sup_{x \in \mathcal{N}} \sup_{y \in
    \mathcal{M}} \pr{x}{\D_1|_J y}.
  \]
  The nets $\mathcal{N}$ and $\mathcal{M}$ can be chosen so that
  $|\mathcal{N}| \le 6^d$ and $|\mathcal{M}| \le 6^m$.
  Combining this with the union bound, we get
  \[
    \P( \norm{\D_1|_J} \ge 4t)
    \le |\mathcal{N}| \cdot |\mathcal{M}| \cdot e^{-ct^2}
    \le \exp \left(-ct^2 + (m+d) \log 6 \right)
    \le  e^{-c't^2}
  \]
  provided that $t \ge C (\sqrt{d}+\sqrt{m})$.
  Let
  \[
    t=t_m=\t \cdot (\sqrt{d}+\sqrt{m} \sqrt{\log \frac{e n}{|J|}}),
  \]
   with $\t > C$ to be chosen later, and set $C_1=4\t$. Taking the union
  bound, we get
  \begin{align*}
    \P(\D_1 \notin \WW_1)
    &\le \sum_{m=1}^n \sum_{|J|=m} \P(\norm{\D_1|_J} > 4t_m)
    \le \sum_{m=1}^n \binom{n}{m} e^{-c' t_m^2}  \\
    &\le \sum_{m=1}^n \exp \left[ -c' \t^2 \cdot \left(
              \sqrt{d}+\sqrt{m} \sqrt{\log \frac{e n}{m}} \right)^2
                        +m \log \frac{e n}{m}
                        \right].
  \end{align*}
  We can choose the constant $\t$ so that the last expression doesn't
  exceed $e^{-d}$.
 \end{step}

 \begin{step}
  Let $k>1$, and assume that
  $C_1 \etc C_{k-1}$ are already defined.
   It is enough to find $C_k>0$ such
  that for any $U \in \WW_{k-1}$ with $|u_{i,j}| \le 1$ for all
  $i,j$
  \begin{equation}    \label{U.V}
    \P (U \rowprod \D_k \notin \WW_k) \le e^{-cd}.
  \end{equation}
  Indeed, in this case
  \[
   \P( \D_1 \rowprod \ldots \rowprod \D_k \notin \WW_k
     \mid \D_1 \rowprod \ldots \rowprod \D_{k-1} \in \WW_{k-1})
   \le e^{-cd}.
  \]
  Hence, the induction hypothesis yields
  \begin{align*}
    &\P( \D_1 \rowprod \ldots \rowprod \D_k \notin \WW_k) \\
    &\le \P( \D_1 \rowprod \ldots \rowprod \D_k \notin \WW_k
      \mid \D_1 \rowprod \ldots \rowprod \D_{k-1} \in \WW_{k-1}) \\
   & \quad + \P (\D_1 \rowprod \ldots \rowprod \D_{k-1} \notin \WW_{k-1}) \\
   & \le k e^{-cd}.
  \end{align*}
  Fix $U \in \WW_{k-1}$. To shorten the notation denote $W=U \rowprod \D_k$.
 For $j \in \N$ define $m_j$ as the smallest number $m$ satisfying
 \[
   d^j \le m \log^j \left( \frac{e n}{m} \right).
 \]
 Our strategy of proving \eqref{U.V} will depend on the cardinality
 of the set $J \subset \{1 \etc n\}$ appearing in \eqref{d: W_k}.

  Consider first any set $J$ such that $|J| \le m_{k-1}$.
  By Lemma \ref{l: norm-blocks-trivial},
 \begin{align*}
   \norm{W|_J}
   &\le \sqrt{d} \norm{U|_J} \le \sqrt{d}
       \cdot C_{k-1} (d^{(k-1)/2}+\sqrt{|J|} \log^{(k-1)/2} (e n/|J|)) \\
   &\le 2C_{k-1}d^{k/2},
 \end{align*}
  and so $W$ satisfies the condition $\WW_K$ with $C_k=2 C_{k-1}$
  for all such $J$.

 Now consider all sets $J$ such  that $m_{k-1}< |J| < m_k$.
 The previous argument shows that any vector $y \in S^{n-1}$
 with $|\supp(y)| \le m_{k-1}$ satisfies $\norm{Wy} \le 2C_{k-1}d^{k/2}$.
 Any $x \in S^{n-1}$ can be decomposed as $x=y+z$, where $|\supp(y)| \le m_{k-1}$
  and $\norm{z}_{\infty} \le m_{k-1}^{-1/2}$. Therefore, to prove \eqref{U.V},
   it is enough to show that
 \begin{multline*}
   \P \left (\exists J \subset \{1 \etc n \} \ m_{k-1}<|J| \le m_k
   \ \text{ and} \right. \\
    \left. \norm{W|_J: B_2^n \cap m_{k-1}^{-1/2}B_{\infty}^n \to B_2^{d^k}} > Cd^k \right))
   \le e^{-cd}.
 \end{multline*}
 To this end take any $z \in S^{n-1}$ such that $|\supp(z)| \le m_k$ and
 $\norm{z}_{\infty} \le m_{k-1}^{-1/2}$.
 We will obtain a uniform bound on $\norm{Wz}_2$ over all such $z$,
 and use the $\e$-net argument to derive a bound for $\norm{W|_J}$
 from it.

 Let $M$ be the minimal natural number such that $4^M m_{k-1} \ge m_k$.
 Let $I_0 \etc I_M$ be blocks of type $m_{k-1}$ of the coordinates
 of $z$.
 Since $U \in \WW_{k-1}$, for any $m \le M$
 \[
  \norm{U|_{I_m}}^2 \le C_{k-1} (d^{k-1}+ |I_m| \log^{k-1} (en/|I_m|))
  \le  2C_{k-1}  |I_m| \log^{k-1} (en),
 \]
 because $|I_m| \ge m_{k-1}$.

   Let $\e=(\e_1 \etc \e_n)$ be a row of the
  matrix $\D_k$. Then the coordinates of the vector $W z$
  corresponding to this row form the vector
  $
    (U \rowprod \e) z = (U \rowprod z^T) \e^T.
  $
  Let $U'$ be the $d^{k-1} \times |J|$ matrix defined as
  \[
    U'= (U \rowprod z^T)|_J.
  \]
  The inequality above and Lemma \ref{l: block decomposition} imply
  \begin{align*}
    \norm{U'}^2 \le \sum_{m=0}^M \norm{U|_{I_m}}^2 \cdot \norm{z|_{I_m}}_{\infty}^2
    &\le  2C_{k-1} \log^{k-1} (en) \sum_{m=0}^M |I_m|\cdot \norm{z|_{I_m}}_{\infty}^2 \\
    &\le  10 C_{k-1}
  \log^{(k-1)/2} \left(e n  \right)
  \end{align*}

  Also, since all entries of $U$ have absolute value at most $1$,
  \[
    \norm{U'}_{HS}^2 \le d^{k-1}.
  \]
  The sequence of coordinates of the vector $W z$ consists of
  $d$ independent copies of $U' \e^T_I$.
    Therefore, applying Lemma \ref{l:
  point-norm} with $l=d$ and $s=t d^k$,
  we get
  \begin{align*}
    p(x):
    &=\P (\norm{W z}^2 \ge (4+t) \cdot d^{k})
    \le 2^d \exp \left(- \frac{c t d^k}{\norm{U'}^2} \right) \\
    &\le 2^d \exp \left(- \frac{t d^k}{c'_k \log^{k-1} \left(e n  \right)}
    \right),
  \end{align*}
  where $c'_k= 4C_{k-1}^2/c$.
  By the volumetric estimate, we can construct a $(1/2)$-net $\NN$ for the set
  \[
    E_k:=\{z \in S^{n-1} \mid |\supp(z)| \le m_k, \ \norm{z}_{\infty} \le m_{k-1}^{-1/2} \}
  \]
   in the Euclidean metric, such that
  \[
   |\NN| \le \binom{n}{m_k}6^{m_k} \le \exp \left(2 m_k \log \left( e n \right ) \right).
  \]
  Since $n \ge d^{k+1/2}$, and $m_k \le d^k$, we have $\log (e n) \le 2 k \log(e n/m_k)$, and so
  \[
    m_k \log (e n) \le (2k)^k \frac{d^k}{\log^{k-1} (e n)}.
  \]
  Hence, we can chose the constant $t=t_k$ large enough, so that
  \begin{align*}
   \P ( \exists z \in \NN \mid \norm{Wz}^2 \ge C_k' d^k)
   &\le |\NN| \cdot
       2^d \exp \left(- \frac{t_k d^k}{c'_k \log^{k-1} \left(e n \right)}
       \right) \\
   &\le \exp \left(- \frac{ d^k}{ \log^{k-1} \left(e n  \right)} \right)
  \end{align*}
  with the constant $C_k'=4+t_k$.
      Thus,
  \[
   \P ( \exists z \in E_k  \mid  \norm{Wz}^2 \ge 4 C_k' d^k)
   \le \exp \left(- \frac{ d^k}{ \log^{k-1} \left(e n  \right)} \right),
  \]
  which implies condition \eqref{d: W_k} with $C_k=(4C_{k-1}^2+4C_k')^{1/2}$ for all sets $J$ such that $|J|<m_k$.

  Finally, consider any set $J$ with $|J| \ge m_k$.
   As in the previous case, we can split any vector $x \in S^{n-1}$ as $x=y+z$,
    where $|\supp(y)| \le m_k$ and $\norm{z}_{\infty} \le m_k^{-1/2}$.
  The previous argument shows that with probability greater
  than $1-\exp \big(-d^k/\log^{k-1}( e n) \big)$,
  \[
    \norm{Wy} \le (4C_{k-1}^2+4C_k')^{1/2} d^k
  \]
  for all such $y$. Therefore, it is enough to estimate
  $\max \norm{W|_J z}$ over $z \in B_2^n \cap m_k^{-1/2} B_{\infty}^n$.
  A $(1/2)$-net in the set $B_2^n \cap m_k^{-1/2} B_{\infty}^n$ is
  too big, so following the argument used in the previous case would
  lead to the losses that break down the proof. Instead, we
  will use the sets $\mathcal{M}_l$ defined in Lemma
  \ref{l: norm-blocks} and obtain the bounds for $\max \norm{W|_J z}$
  for each set separately.

  To this end, set $b=1/\sqrt{m_k}$, and let $L_0$ be
  the largest number such that $2^{-L_0} \ge b$.
  Let $l \ge L_0$ and take any $x \in \mathcal{M}_l$.
  Choose any set $I \supset \supp (x)$ such that $|I|=4^l$.
  As in the previous case,
  let $U'$ be the $d^{k-1} \times 4^l$ matrix defined as
  \[
    U'= (U \rowprod x^T)|_I.
  \]
  Since all non-zero
   coordinates of $x$ have absolute value
  $2^{-l}=1/\sqrt{|I|}$, the assumption $U \in \WW_{k-1}$ implies
  \begin{align*}
    \norm{U'} \le \frac{1}{\sqrt{|I|}} \norm{U|_I}
    &\le  \frac{C_{k-1}}{\sqrt{|I|}} \left(d^{(k-1)/2}+\sqrt{|I|} \cdot
  \log^{(k-1)/2} \left(\frac{e n}{|I|} \right) \right) \\
    &\le  2C_{k-1}
  \log^{(k-1)/2} \left(e n \cdot4^{-l} \right)
  \end{align*}
  The last inequality holds since for any $m \ge m_k \ge m_{k-1}$
 \[
  d^{(k-1)/2} \le \sqrt{m} \log^{(k-1)/2}\left( \frac{e n}{m} \right).
 \]
  Also, as before, all entries of $U$ have absolute value at most 1,
  so
  $
    \norm{U'}_{HS}^2 \le d^{k-1}.
  $
  The sequence of coordinates of the vector $W x$ consists of
  $d$ independent copies of $U' \e^T_I$.
   Therefore, applying Lemma \ref{l:
  point-norm}, we get
  \begin{align*}
    \P (\norm{W x}^2 \ge 4d \cdot d^{k-1}+s)
    &\le 2^d \exp \left(- \frac{cs}{\norm{U'}^2} \right) \\
    &\le 2^d \exp \left(- \frac{ s}{c'_k \log^{k-1} \left(e n \cdot4^{-l} \right)} \right)
  \end{align*}
  where $c'_k=4C_{k-1}^2/c$.
  Set
  \[
  s=s(l)
   = 2 c'_k \cdot    4^l \log^k \left(e n \cdot4^{-l} \right).
  \]
  Then $s(l) \ge 2c_k' m_k \log^k (en/m_k) \ge 2 c_k' d^k$, so the
  previous inequality can be rewritten as
  \[
    \P (\norm{W x}^2 \ge  c_k'' s(l))
     \le  \exp \left(- 2 \cdot 4^l \log \left(e n \cdot4^{-l}  \right)
     \right).
  \]
  Hence, the union bound implies that there exists a constant $C_k$
  satisfying
  \begin{align*}
    &\P (\exists l \ge L_0 \ \exists x \in \mathcal{M}_l \
     \norm{W x} > C_k s(l) )
      \\
    &\le \sum_{l=L_0}^{\infty} \binom{n}{4^l} \cdot 3^{4^l}
      \exp \left(- 2 \cdot 4^l \log \left(e n \cdot4^{-l}  \right) \right)
    \le\exp \left(-   4^{L_0} \log \left(e n \cdot4^{-L_0}  \right) \right) \\
    &\le \exp(-d^k).
  \end{align*}
  Define the event $\Om_1$ by
  \[
  \Om_1 = \{\forall l \ge L_0 \ \forall x \in \mathcal{M}_l \ \
     \norm{W x} \le s(l) \}.
  \]
  The previous inequality means that $\P(\Om_1^c) \le  \exp(-d^k)$.

  Assume that the event $\Om_1$ occurs.
  Let $J \subset \{1 \etc n\}$ be such that $|J| \ge m_k$,
  and choose $L'$ so that $4^{L'-1}< |J| \le 4^{L'}$.
  Applying Lemma \ref{l: norm-blocks}
  to $T=W|_J$ and $b= 1/\sqrt{m_k}$, we obtain
  \begin{align*}
    \norm{W|_J: B_2^n \cap m_k^{-1/2} B_{\infty}^n \to B_2^{d^k}}^2
    &\le 5 c_k'' \sum_{l=L_0}^{L'} s(l)
    \le C''_k   4^{L'} \log^{k} \left(\frac{e n}{4^{L'}} \right) \\
    &\le 4 C''_k |J|  \log^{k} \left(\frac{e n}{|J|} \right).
  \end{align*}
 This shows that condition \eqref{d: W_k} holds with
 $C_k=(4C_{k-1}^2+4C_k'+4C_k'')^{1/2}$ for all non-empty sets $J \subset \{1 \etc n\}$. This completes the induction step and the proof of Lemma \ref{l: norm}.
 \end{step}
\end{proof}

\subsection{Lower bounds for the $Q$-norm}
 To obtain bounds for the Levy concentration function below, we need
 a lower estimate for a certain norm of the row product
 of random matrices.
 \begin{definition}
  Let $U=(u_{j,k})$ be an $M \times m$ matrix. Denote
  \[
  \norm{U}_Q=\sum_{j=1}^M \left( \sum_{k=1}^m u_{j,k}^2
  \right)^{1/2}.
  \]
  In other words, $\norm{\cdot}_Q$ is the norm in the Banach space
  $\ell_1^M(\ell_2^m)$.
 \end{definition}
 If $U$ is an $M \times m$ matrix with independent centered entries of unit variance,
 then for any $x \in \R^n$,
 \[
  \E \norm{U \rowprod x^T}_Q \le \sum_{j=1}^M \left(\E \sum_{k=1}^m
    u_{j,k}^2 x^2(k) \right)^{1/2} = M \norm{x}_2.
 \]
   Moreover, if the coordinates of $x$ are
  commensurate, we can expect that a reverse inequality would follow from the Central Limit theorem.
  This observation leads to the following definition.

   Let $\VV_L$ be the set of $d^L \times n$ matrices $A$
 such that for any $x \in \R^n$
 \begin{equation}                   \label{c: large Q norm}
   \norm{A  \rowprod x^T}_{Q} \ge \tilde{c} d^{L} \norm{x}_2.
 \end{equation}
 We will show below that the row product of $L$ independent $d \times n$ random matrices belongs to $\VV_L$ with high probability, provided that the constant $\tilde{c}$ in \eqref{c: large Q norm} is appropriately chosen.
 To this end, consider the behavior of $\norm{(\D_1 \rowprod \ldots \D_L) \rowprod x^T}_{Q}$ for a fixed vector $x \in \R^n$.
 \begin{lemma}          \label{l: Q-norm one vector}
  Let $\D_1 \etc \D_L$ be $d \times m$ random matrices with
  independent $\d$ random entries. Then for any $x \in \R^m$
  \[
    \P \left( \norm{(\D_1 \rowprod \ldots \D_L) \rowprod x^T}_{Q} \le c
    d^L \norm{x}_2 \right)
    \le \exp \left(- \frac{cd^L \norm{x}_2^2}{ \norm{x}_{\infty}^2}
      \right).
  \]
 \end{lemma}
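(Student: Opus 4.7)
The strategy is to view $F := \norm{(\D_1 \rowprod \cdots \rowprod \D_L) \rowprod x^T}_Q = \sum_{(i_1,\ldots,i_L) \in [d]^L} \sqrt{Y_{(i_1,\ldots,i_L)}}$, with $Y_{(i_1,\ldots,i_L)} := \sum_k \prod_l (\delta^{(l)}_{i_l,k})^2 x(k)^2$, as a polyconvex function of the entries of the matrices $\D_l$, and to combine a lower bound on $\E F$ with measure concentration. Observe that $F$ is convex and Lipschitz in each matrix $\D_l$ separately, being a sum of $\ell_2$-norms of linear images of the rows of $\D_l$.

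For the expectation lower bound: each $Y_{(i_1,\ldots,i_L)}$ is a sum over $k$ of independent nonnegative random variables in $[0,x(k)^2]$ with expectation at least $\d^{2L} x(k)^2$, so $\E Y \ge \d^{2L}\norm{x}_2^2$. Expanding $Y^2$ and using independence across columns gives $\E Y^2 \le \norm{x}_2^4$, whence Paley--Zygmund yields $\E \sqrt{Y} \ge c\norm{x}_2$ and therefore $\E F \ge c d^L \norm{x}_2$. For the deviation of $F$ below its mean, since the polyconvex concentration of Lemma \ref{l: polyconvex} controls only the upper tail, I would use instead the Doob martingale $F_k := \E[F \mid \D_1,\ldots,\D_k]$, with $F_0 = \E F$ and $F_L = F$: conditionally on $\D_1,\ldots,\D_{k-1}$, $F_k$ is convex and Lipschitz in $\D_k$, so Talagrand's theorem gives each increment $F_k - F_{k-1}$ sub-Gaussian tails with parameter the Lipschitz constant $L_*$ of $F$ in $\D_k$. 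Summing the $L$ independent conditional increments by Hoeffding's inequality and setting the total deviation to $\E F/2 \sim d^L\norm{x}_2$ then yields the claimed tail probability.

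The principal difficulty is the Lipschitz estimate $L_*$. A direct bound via $|\delta^{(l')}_{i,k}| \le 1$ and Cauchy--Schwarz over the $d$ rows of $\D_k$ gives only $L_* \le d^{L-1/2}\norm{x}_\infty$, which would produce the weaker exponent $d\norm{x}_2^2/\norm{x}_\infty^2$. Recovering the stated exponent $d^L$ requires the sharper bound $L_* \lesssim d^{L/2}\norm{x}_\infty$, which must exploit the approximate orthogonality of the $d^{L-1}$ rows of the row product in which a given row of $\D_k$ appears, so that their contributions to $\nabla_{\D_k} F$ combine with $\ell_2$-cancellation rather than $\ell_1$-accumulation. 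This Lipschitz sharpening --- in effect the lower-bound analogue of the moment computation of Section \ref{s: norm} --- is the principal technical hurdle.
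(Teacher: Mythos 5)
Your expectation bound (Paley--Zygmund per row, giving $\E F \ge c\,d^L\norm{x}_2$) and the Doob-martingale-plus-conditional-Talagrand skeleton are fine as far as they go, but the step you flag as the ``principal technical hurdle'' is not merely missing --- it is false, so the route cannot be completed as proposed. Take $x=e_1$ and entries equal to $\pm 1$ (a legitimate $\d$ random variable). Conditionally on $\D_1 \etc \D_{L-1}$, every factor $|\d^{(l)}_{i_l,1}|$ with $l<L$ equals $1$, so
\[
  F(\D_L)=\sum_{i_1 \etc i_L} \prod_{l=1}^{L} \bigl|\d^{(l)}_{i_l,1}\bigr|
  = d^{\,L-1}\sum_{i=1}^{d}\bigl|\d^{(L)}_{i,1}\bigr|,
\]
whose Lipschitz constant as a function of $\D_L$ (in the Hilbert--Schmidt metric) is exactly $d^{\,L-1}\sqrt{d}=d^{\,L-1/2}\norm{x}_\infty$: the $d^{\,L-1}$ contributions to the gradient coming from the rows of the row product containing a given row of $\D_L$ all point in the \emph{same} direction, so there is no $\ell_2$-cancellation to exploit (a similar computation for a spread vector such as $x=(1\etc 1)$ again gives order $d^{\,L-1/2}\norm{x}_\infty$). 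Hence your ``direct bound'' $L_*\le d^{\,L-1/2}\norm{x}_\infty$ is tight, the hoped-for sharpening $L_*\lesssim d^{L/2}\norm{x}_\infty$ does not hold, and any argument running through concentration of $F$ around its mean with a Lipschitz constant (uniform or typical) can only produce an exponent of order $d\,\norm{x}_2^2/\norm{x}_\infty^2$, not the stated $d^L\norm{x}_2^2/\norm{x}_\infty^2$.

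The paper obtains the $d^L$ in the exponent by working row by row rather than with the global function. After normalizing $\norm{x}_\infty=1$, it applies Bernstein's inequality, for each fixed multi-index $(i_1\etc i_L)$, to $\sum_j x^2(j)\nu_j$ with $\nu_j=\prod_l (\d^{(l)}_{i_l,j})^2\in[0,1]$ and $\E\nu_j\ge\d^{2L}$, concluding that the $\ell_2$-norm $Y_i$ of that single row of $(\D_1\rowprod\ldots\rowprod\D_L)\rowprod x^T$ satisfies $\P(Y_i<c\norm{x}_2)\le 2\exp(-c'\norm{x}_2^2)$. It then observes that if the $Q$-norm $\sum_i Y_i$ is at most $\theta c\, d^L\norm{x}_2$, then at least $(1-\theta)d^L$ rows must individually satisfy $Y_i<c\norm{x}_2$, and bounds this by a $\binom{d^L}{\lfloor(1-\theta)d^L\rfloor}$ union bound times the product of the per-row probabilities; the strength $\exp(-c\,d^L\norm{x}_2^2/\norm{x}_\infty^2)$ thus comes from multiplying failure probabilities over a constant proportion of the $d^L$ rows (which are treated there as essentially independent), a mechanism that a worst-case Lipschitz/martingale bound structurally cannot reproduce. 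What your argument honestly delivers is the weaker exponent $c\,d\,\norm{x}_2^2/\norm{x}_\infty^2$; to prove the lemma as stated you would have to switch to the paper's per-row Bernstein-plus-counting scheme.
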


 \begin{proof}
  Without loss of generality, assume that $\norm{x}_{\infty}=1$, so $\norm{x}_2 \ge 1$.
  Let $\a>0$, and let $\nu_1 \etc \nu_m \in [0,1]$ be independent random variables satisfying
  $\E \nu_j \ge \a$ for all $j=1 \etc n$.
    The standard symmetrization and
   Bernstein's inequality \cite{VW} yield
  \[
   \P ( |\sum_{j=1}^m x^2(j) \nu_j - \E \sum_{j=1}^m x^2(j) \nu_j|>t )
   \le 2 \exp \left( - \frac{t^2}{2(\sum_{j=1}^m x^4(j) +t/3)} \right).
  \]
  Setting $t=(\a/2) \norm{x}_2^2$, and using $\norm{x}_{\infty} \le 1$, we get
  \[
   \P \left(\sum_{j=1}^m x^2(j) \nu_j < \frac{\a}{2} \norm{x}_2^2 \right)
   \le 2 \exp \left( - \frac{\a^2}{16} \norm{x}_2^2 \right).
  \]
  Applying the previous inequality to the random variable $Y_i, \ i=1 \etc d^L$,
  which is the $\ell_2$-norm of a row of the matrix $(\D_1 \rowprod \ldots \D_L) \rowprod x^T$,
  we obtain $\P(Y_i < c \norm{x}_2) \le 2 \exp(-c' \norm{x}_2^2)$.
  Let $0<\theta<1$.
   If
   \[
      \norm{(\D_1 \rowprod \ldots \D_L) \rowprod x^T}_{Q}
      =\sum_{i=1}^{d^L} Y_i \le \theta \cdot    d^L \norm{x}_2,
   \]
   then $Y_i < c \norm{x}_2$ for at least $(1-\theta) d^L$ numbers $i$. Hence,
  \begin{align*}
   &\P \left( \norm{(\D_1 \rowprod \ldots \D_L) \rowprod x}_{Q} \le \theta c
    d^L \norm{x}_2 \right) \\
   & \le \binom{d^L}{\lfloor (1-\theta) d^L \rfloor} \exp (- c(1-\theta) d^L \norm{x}_2^2
      ) \\
   & \le \exp \left( - d^L \Big( c (1- \theta) \norm{x}_2^2- \theta \log \frac{e}{\theta} \Big) \right)
         \le \exp (- (c/2) d^L \norm{x}_2^2    ),
  \end{align*}
  if $\theta$ is small enough.
 \end{proof}

 We will use Lemma \ref{l: Q-norm one vector} to show that the row
 product $\D_1 \rowprod \ldots \rowprod \D_{K-1}$ satisfies condition
 \eqref{c: large Q norm} with high probability.

 \begin{lemma}  \label{l: Q norm}
 There exists a constant $\tilde{c}>0$ for which the following holds. Let $K>1$, and let $n \le d^K$.
  For $d \times n$ matrices $\D_1 \etc \D_{K-1}$ be matrices with independent $\d$ random
  entries
  \[
    \P(\D_1 \rowprod \ldots \rowprod \D_{K-1} \notin \VV_{K-1})
    \le \exp(-cd^{K-1}).
  \]
 \end{lemma}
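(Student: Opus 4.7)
The plan is to use a concavity argument to reduce the uniform bound over $S^{n-1}$ to a minimum over the $n$ standard basis vectors, and then apply Lemma \ref{l: Q-norm one vector} with a routine union bound. Set $A := \D_1 \rowprod \ldots \rowprod \D_{K-1}$. By homogeneity it suffices to prove that $\norm{A \rowprod x^T}_Q \ge \tilde c\, d^{K-1}$ for every $x \in S^{n-1}$ with probability at least $1 - \exp(-cd^{K-1})$. Observe that $\norm{A \rowprod x^T}_Q$ depends on $x$ only through $|x|$: setting $u_j := x_j^2$,
\[
\norm{A \rowprod x^T}_Q = \sum_I \Bigl(\sum_j A_{Ij}^2\, u_j\Bigr)^{1/2},
\]
where $I = (i_1, \ldots, i_{K-1})$ ranges over rows of $A$. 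As $x$ sweeps $S^{n-1}$, $u$ sweeps the simplex $\Delta_n = \{u \ge 0 : \sum_j u_j = 1\}$. Each summand is the composition of $\sqrt{\cdot}$ with a nonnegative linear functional, hence is concave in $u$, and the sum of concave functions is concave. A concave function on a polytope attains its minimum at a vertex; the vertices of $\Delta_n$ are $e_1, \ldots, e_n$, so
\[
\min_{x \in S^{n-1}} \norm{A \rowprod x^T}_Q = \min_{1 \le j \le n} \norm{A \rowprod e_j^T}_Q.
\]

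Next, apply Lemma \ref{l: Q-norm one vector} with $L = K-1$ and the vector $x = e_j$ (noting $\norm{e_j}_2 = \norm{e_j}_\infty = 1$): for each $j$,
\[
\P \bigl(\norm{A \rowprod e_j^T}_Q \le \tilde c\, d^{K-1}\bigr) \le \exp(-c d^{K-1}),
\]
with $\tilde c$ chosen to be the constant produced by that lemma. A union bound over $j = 1, \ldots, n$, combined with the hypothesis $n \le d^K$, gives
\[
\P\bigl(\min_j \norm{A \rowprod e_j^T}_Q \le \tilde c\, d^{K-1}\bigr) \le n \exp(-c d^{K-1}) \le d^K \exp(-c d^{K-1}) \le \exp(-c' d^{K-1}),
\]
provided $d$ is large enough that $K \log d \le (c - c')\, d^{K-1}$, i.e.\ $d \ge C_K$ for a suitable constant $C_K$ depending only on $K$; for smaller $d$ the conclusion is trivial after adjusting the absolute constant.

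There is no real obstacle once the concavity is noticed: the uncountable infimum over $S^{n-1}$ collapses exactly onto $n$ points, sidestepping any need for $\varepsilon$-nets, chaining, or shell decompositions of the sphere. The key observation is that $\norm{A \rowprod x^T}_Q$ is a sum of $\ell_2$-norms of Hadamard products with $x$, each of which becomes the square root of a nonnegative linear functional of $u = (x_j^2)_j$, and concavity is preserved by summation.
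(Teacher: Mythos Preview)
Your proof is correct and genuinely simpler than the paper's. The key observation that
\[
u \longmapsto \sum_{I}\Bigl(\sum_j A_{Ij}^2\,u_j\Bigr)^{1/2}
\]
is concave on the simplex $\Delta_n$ is spot on: each summand is the square root of a nonnegative linear form, and the minimum of a concave function over a polytope is attained at a vertex. This collapses the infimum over $S^{n-1}$ to the $n$ coordinate directions, after which a single invocation of Lemma~\ref{l: Q-norm one vector} at $x=e_j$ together with the union bound (absorbing the factor $n\le d^K$ into the exponent) finishes the job.

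The paper's proof, by contrast, does not notice this concavity. It works much harder: it stratifies the sphere by the sets $S(a,m)=\{x\in S^{n-1}:\|x\|_\infty\le a,\ |\supp(x)|\le m\}$, constructs discrete nets in each stratum, applies Lemma~\ref{l: Q-norm one vector} together with a union bound over the net, and finally decomposes an arbitrary $x\in S^{n-1}$ into three pieces $x_1+x_2+x_3$ lying in appropriately chosen strata. Your route sidesteps all of that machinery. The price is nothing in this setting: the two proofs yield the same conclusion with the same probability bound $\exp(-cd^{K-1})$, and your argument even makes transparent why the worst case for the $Q$-norm occurs at the most ``spiky'' vectors (basis directions), which is exactly where Lemma~\ref{l: Q-norm one vector} is weakest. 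The paper's stratification approach would be needed if one wanted a bound that improves for spread-out $x$, but for the lemma as stated your concavity reduction is both sufficient and cleaner.
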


 \begin{proof}
   Denote for shortness $\bar{\D}= \D_1 \rowprod
  \ldots \rowprod \D_{K-1}$.
  To conclude that $\bar{\D} \in \VV_{K-1}$, it is enough to show that condition \eqref{c: large Q norm} holds
  for any $x \in S^{n-1}$.

  For $x \in S^{n-1}$ denote by $\Om(x)$ the set of matrices $A$ such that
  $
     \norm{A  \rowprod x}_{Q}
    \le c
    d^{K-1}.
  $
  For $L=K-1$ Lemma  \ref{l: Q-norm one vector} yields
  \begin{equation}    \label{Q norm-1}
   \P(\bar{\D} \in \Om^c(x) )
    \le \exp \left(- \frac{cd^{K-1} }{2 \norm{x}_{\infty}^2}
      \right).
  \end{equation}
  As the first step in proving the lemma,
  we will show that for $A=\bar{\D}$ condition \eqref{c: large Q
  norm} holds for all $x$ from some subset of the sphere. More
  precisely, we will prove the following claim.
  \begin{claim}
   Let $a>0$ and $m \le n$.
   Denote
   \[
     S(a,m)= \{x \in S^{n-1} \mid \norm{x}_{\infty} \le a,  \
     |\supp(x)| \le m \}.
   \]
   If $a^2 m \log d < C d^{K-1}$, then
   \[
     \P( \bar{\D} \notin \bigcap_{x \in S(a,m)}\Om(x))
     \le \exp \left(- \frac{c' d^{K-1}}{a^2} \right).
   \]
  \end{claim}
  It is enough to prove the claim for $0<a \le 1$.
  Note that if $0 \le |y(j)| \le |x(j)|$ for any $j=1 \etc k$, then
  $
      \norm{\bar{\D} \rowprod y^T}_{Q}
    \le \norm{\bar{\D} \rowprod x^T}_{Q}.
  $
  Hence, to prove the claim,
  it is enough to construct a set $\NN$ of vectors $y \in B_2^n \setminus (1/2)B_2^n$
  such that for any $x \in S(a,m)$ there is $y \in \NN$ with $|y(j)|
  \le |x(j)|$ for all $j$ and
   \[
     \P( \bar{\D} \notin \bigcap_{y \in \NN}\Om(y))
     \le \exp \left(- \frac{c' d^{K-1}}{a^2} \right).
   \]
   Set
   \[
    \NN= \left \{ y \in \left( \frac{1}{2 \sqrt{m}} \right) \mathbb{Z}^n
    \mid |\supp(y)| \le m,
         \ \norm{y}_{\infty} \le a \text{ and } \frac{1}{2} \le \norm{y}_2 \le 1 \right\}.
   \]
  By the volumetric considerations
   \[
     |\NN| \le \binom{n}{m} C^m \le \exp(c m \log n)
     \le \exp( C' m \log d),
   \]
   since $n \le d^K$. For  $x \in S(a,m)$ consider the vector $y$ with
   coordinates $y(j) = (1/2\sqrt{m}) \cdot \lfloor 2\sqrt{m} |x(j)|
   \rfloor$. Then $|y(j)| \le |x(j)|$, and $\norm{y}_2 \ge 1- \norm{x-y}_2 \ge 1/2$, so  $y \in \NN$.
   By the union bound and \eqref{Q norm-1},
   \[
     \P( \bar{\D} \notin \bigcap_{y \in \NN}\Om(y))
     \le |\NN| \exp \left(- \frac{cd^{K-1}}{2 a^2}
     \right).
   \]
   The claim now follows from the assumption $a^2 m \log d \le C d^{K-1}$
   for a suitable constant $C$.

   The lemma can be easily derived from the claim. For $a$ and $m$
   as above denote $\Om(a,m)=\bigcap_{x \in S(a,m)}\Om(x)$.
   Set
   \[
    a_i=3 d^{(1-i)K/6}, \quad m_i=\min \left( d^{iK/3}, n \right), \quad i=1,2,3.
   \]
   Then $m_3=n$, and the condition $a_i^2 m_i \log d \le Cd^{K-1}, \ i=1,2,3$ is satisfied.
    Set
   \[
     \VV=  \bigcap_{i=1}^3 \Om(a_i,m_i).
   \]
   By the claim, $\P(\VV^c) \le \exp (-cd^{K-1})$.

   Assume now that $\bar{\D} \in \VV$.
   Using the non-increasing rearrangement of $|x(j)|$,
   we can decompose any $x \in S^{n-1}$ as $x=
   x_1+x_2+x_3$,where $x_1, x_2 ,x_3$ have disjoint supports,
    $|\supp(x_i)| \le m_i$, $\norm{x_i}_{\infty} \le
   a_i/3$.
    By the triangle
   inequality,  $\norm{x_i}_2 \ge 1/3$ for some $i$.
   Thus,
   \begin{align*}
      \norm{\bar{\D} \rowprod x^T}_Q
     \ge \norm{\bar{\D} \rowprod x_i^T}_Q
     \ge \norm{\bar{\D} \rowprod \frac{x_i^T}{\norm{x_i}_2}}_Q \cdot
     \frac{1}{3}
     \ge \frac{c}{3} d^{K-1},
   \end{align*}
   since $x_i/\norm{x_i}_2 \in S(a_i, m_i)$. This proves the Lemma with $\tilde{c}=c/3$.
 \end{proof}

\section{Bounds for the Levy concentration function}
    \label{s: Levy}

\begin{definition}
  Let $\rho>0$.
  Define the Levy concentration function of a random vector $X \in \R^n$ by
  \[
    \LL_1 (X,\rho) =\sup_{x \in \R^n} \P (\norm{X-x}_1 \le \rho).
  \]
\end{definition}
Unlike the standard definition of the Levy concentration function,
we use the $\ell_1$-norm instead of the $\ell_2$-norm. We need the
following standard
\begin{lemma}                                  \label{l: levy conc}
  Let $X \in \R^n$ be a random vector, and let $X'$ be an
  independent copy of $X$. Then for any $\rho>0$
  \[
    \LL_1(X,\rho) \le \P^{1/2} (\norm{X-X'}_1 \le 2 \rho).
  \]
\end{lemma}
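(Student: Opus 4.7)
The plan is to use the classical symmetrization trick: compare the concentration of $X$ on a ball centered at an arbitrary point $x$ to the concentration of the difference $X-X'$ on a ball of twice the radius, which is centered at the origin automatically. The proof is short and standard; I will sketch it as the steps I would carry out.

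First, fix an arbitrary $x \in \R^n$ and consider the two independent events $A = \{\norm{X-x}_1 \le \rho\}$ and $A' = \{\norm{X'-x}_1 \le \rho\}$. Since $X$ and $X'$ are independent and identically distributed,
\[
\P(A \cap A') = \P(A) \cdot \P(A') = \P(\norm{X-x}_1 \le \rho)^2.
\]
Next, I would invoke the triangle inequality: on the event $A \cap A'$ we have
\[
\norm{X-X'}_1 \le \norm{X-x}_1 + \norm{X'-x}_1 \le 2\rho,
\]
so $A \cap A' \subset \{\norm{X-X'}_1 \le 2\rho\}$. Combining this inclusion with the previous identity yields
\[
\P(\norm{X-x}_1 \le \rho)^2 \le \P(\norm{X-X'}_1 \le 2\rho).
\]

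Finally, since the right-hand side does not depend on $x$, I would take the supremum over $x \in \R^n$ on the left, obtaining $\LL_1(X,\rho)^2 \le \P(\norm{X-X'}_1 \le 2\rho)$, and then take square roots to reach the stated bound. There is no real obstacle here; the only point worth flagging is that the argument uses nothing about the $\ell_1$ norm beyond the triangle inequality, so it goes through verbatim for any translation-invariant metric, which is why this lemma is stated as a general tool for later use.
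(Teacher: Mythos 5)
Your proposal is correct and is essentially the paper's own argument: fix the center, use independence to write the squared probability as the probability of the intersection, apply the triangle inequality to land in the event $\{\norm{X-X'}_1 \le 2\rho\}$, and take the supremum over centers. Nothing further is needed.
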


 \begin{proof}
  Let $y \in \R^n$ be any vector. Then
  \begin{align*}
    \P^2 (\norm{X-y}_1 \le \rho)
    &= \P (\norm{X-y}_1 \le \rho \text{ and } \norm{X'-y}_1 \le \rho)
    \\
    &\le \P(\norm{X-X'}_1 \le 2 \rho).
  \end{align*}
  Taking the supremum over $y \in \R^n$ proves the Lemma.
 \end{proof}
In the next lemma, we bound the Levy concentration function using
Talagrand's inequality, in the same way it was done in the proof of
Lemma \ref{l: point-norm}.

\begin{lemma}                            \label{l: Levy one vector}
 Let $U=(u_{i,j})$
  be any $N \times n$  matrix, and let $\e=(\e_1 \etc \e_n)^T$ be
  a vector with independent $\d$ random coordinates.
  Then for any $x \in \R^n$
  \begin{equation}   \label{i: Levy one vector}
    \LL_1 \left( (U \rowprod \e^T) x , c \norm{U \rowprod x^T}_{Q}
    \right)
   \le 2 \exp \left( - c'\frac{ \norm{U \rowprod x^T}_{Q}^2}{N \norm{U \rowprod x^T}^2} \right).
  \end{equation}
\end{lemma}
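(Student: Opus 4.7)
The plan is to combine the symmetrization inequality of Lemma \ref{l: levy conc} with Talagrand's measure concentration theorem for convex functions, with the main ingredient being a Paley--Zygmund type lower bound on the median of the resulting convex functional. Set $W := U \rowprod x^T$, so that $(U \rowprod \e^T)x = W\e$ and the function $F(\xi) := \|W\xi\|_1$ is convex. By Cauchy--Schwarz in $\R^N$, $\|Wy\|_1 \le \sqrt{N}\|Wy\|_2 \le \sqrt{N}\|W\|\cdot\|y\|_2$, so $F$ is $\sqrt{N}\|W\|$-Lipschitz. If $\e'$ is an independent copy of $\e$, Lemma \ref{l: levy conc} yields
\[
  \LL_1(W\e, \rho) \le \P^{1/2}\bigl(F(\e - \e') \le 2\rho \bigr),
\]
and I view $\tilde F(\e, \e') := F(\e - \e')$ as a convex $\sqrt{2N}\|W\|$-Lipschitz function of the $2n$ independent coordinates $\e_j, \e'_j \in [-1,1]$.

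Next I bound $\E \tilde F$ from below. Writing $v_i$ for the $i$-th row of $W$ and $S_i := \langle v_i, \e - \e' \rangle$, one has $\E S_i^2 \ge 2\d^2 \|v_i\|_2^2$ from the variance lower bound, while $\E S_i^4 \le C \|v_i\|_2^4$ follows by expanding the fourth power and using independence, centering, and $|\e_j - \e'_j|\le 2$. Log-convexity of $L^r$-norms, $\E S_i^2 \le (\E |S_i|)^{2/3} (\E S_i^4)^{1/3}$, equivalently $\E |S_i| \ge (\E S_i^2)^{3/2}(\E S_i^4)^{-1/2}$, then gives $\E |S_i| \ge c_\d \|v_i\|_2$; summing over $i$ produces $\E \tilde F \ge c_\d \|W\|_Q$. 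Talagrand's theorem applied to $\tilde F$ furnishes $|\tilde M - \E \tilde F| \le C \sqrt{N}\|W\|$ for the median $\tilde M$ of $\tilde F$, hence $\tilde M \ge c_\d \|W\|_Q - C\sqrt{N}\|W\|$.

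To finish, I split into two regimes. If $c_\d \|W\|_Q \ge 2C\sqrt{N}\|W\|$, then $\tilde M \ge (c_\d/2)\|W\|_Q$; choosing $\rho$ with $2\rho \le \tilde M/2$, a second application of Talagrand's inequality gives
\[
  \P(\tilde F(\e,\e') \le 2\rho) \le 4\exp\!\left(-c'\frac{\|W\|_Q^2}{N\|W\|^2}\right),
\]
and \eqref{i: Levy one vector} follows after taking the square root. In the opposite regime $\|W\|_Q^2/(N\|W\|^2)$ is bounded by a constant depending only on $\d$, so the right-hand side of \eqref{i: Levy one vector} exceeds $1$ once $c'$ is chosen small, making the assertion trivial. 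The main obstacle is the Paley--Zygmund step: translating the cheap $L^2$ variance lower bound into an $L^1$ lower bound with a constant depending only on $\d$, which forces the fourth-moment estimate above and the bookkeeping with the distribution of $\e_j - \e'_j$; the Lipschitz computation and the mean-to-median comparison are then routine consequences of Talagrand's theorem.
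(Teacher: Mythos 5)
Your proposal is correct and follows essentially the same route as the paper: symmetrize via Lemma \ref{l: levy conc}, observe that $y \mapsto \norm{(U \rowprod x^T)y}_1$ is convex and $\sqrt{N}\norm{U \rowprod x^T}$-Lipschitz, use Talagrand's theorem to compare mean and median and to get the tail bound, lower-bound the mean by $c\norm{U \rowprod x^T}_Q$, and dispose of the regime where the claimed bound exceeds $1$ by choosing $c'$ small. The only differences are cosmetic: you prove the $\E|\pr{v_i}{\e-\e'}| \ge c_\d \norm{v_i}_2$ step directly by the fourth-moment/H\"older (Paley--Zygmund) argument where the paper cites Lemma 2.6 of \cite{RV}, and you apply Talagrand to the function of the $2n$ coordinates $(\e,\e')$, which is in fact a slightly cleaner way to handle the symmetrized variable than the paper's phrasing.
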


\begin{proof}
  Note that $(U \rowprod \e^T) x=(U \rowprod x^T) \e$.
  Let $\e_1' \etc \e_n'$ be
  independent copies of $\e_1 \etc \e_n$.
  Applying Lemma
  \ref{l: levy conc}, we obtain for any $\rho>0$
  \begin{equation}                               \label{Levy one vector-1}
    \LL_1 \left((U \rowprod x^T) \e, \rho
    \right)
    \\
   \le \P^{1/2} \left( \norm{(U \rowprod x^T) (\e-\e')}_1 \le
       2 \rho
    \right).
  \end{equation}
  Consider a function $F:\R^{n} \to \R$, defined
  by
  \[
    F(y)=\norm{(U \rowprod x^T) y}_1,
  \]
  where $y \in \R^n$. Then $F$ is a convex
  function with the
  Lipschitz constant
  $L\le  \norm{U \rowprod x^T: B_2^n \to B_1^N}\le \sqrt{N} \norm{U \rowprod x^T}$.

  By Talagrand's measure concentration theorem
  \[
    \P(|F(\e-\e')-\mathbb{M}(F)|>s)
     \le 4 \exp \left( - \frac{cs^2}{L^2} \right),
  \]
  where $\mathbb{M}(F)$ is a median of $F$,
  considered as a function on $\R^n$ equipped with
  the probability measure defined by the vector $\e-\e'$.
  This tail estimate
  implies
  \[
    |\mathbb{M}(F)- \E F|
    \le c_1 L \le c_1 \sqrt{N} \norm{U \rowprod x^T}.
  \]
  By Lemma 2.6 \cite{RV} we have
  \begin{align*}
    \E F
    &= \E \sum_{i=1}^N \left| \sum_{j=1}^n u_{i,j} x(j) \cdot
    (\e(j)-\e'(j)) \right|
    \ge c_2  \sum_{i=1}^N \left( \sum_{j=1}^n u_{i,j}^2 x^2(j)
    \right)^{1/2}  \\
    &= c_2 \norm{(U \rowprod x^T)}_Q.
  \end{align*}
  Note that if the constant $c'$ in the formulation of the lemma is
  chosen small enough, we may assume that
  $2 c_1 \sqrt{N} \norm{U \rowprod x^T} \le c_2 \norm{(U \rowprod x^T)}_Q$.
  Indeed, if this inequality does not hold, the right-hand side of
  \eqref{i: Levy one vector} would be greater than 1.
   Combining the
  previous estimates yields $\mathbb{M}(F) \ge  (c_2/2) \norm{(U \rowprod x^T)}_Q$.
   Hence,
  \begin{align*}
   &\P \left(  \norm{(U \rowprod x^T) (\e-\e')}_1
       \le \frac{c_2}{4} \norm{U \rowprod x^T}_Q \right) \\
   &\le \P \left(|F(\e-\e')- \mathbb{M}(F)|
       \ge \frac{1}{4} \mathbb{M}(F)\right)  \\
   &\le 4 \exp \left( - \frac{c \mathbb{M}^2(F)}{L^2} \right)
   \le 4 \exp \left( - c'\frac{ \norm{U \rowprod x^T}_{Q}^2}{N \norm{U \rowprod x^T}^2} \right).
  \end{align*}
  This inequality and \eqref{Levy one vector-1}, applied with $\rho=\frac{c_2}{8} \norm{(U \rowprod x^T)}_Q$, finish the proof.
\end{proof}

 For the next result we need the following standard Lemma.
  \begin{lemma}     \label{l: trivial-tensorization}
     Let $s_1 \etc s_d$ be independent non-negative random
     variables such that $\P(s_j \le R) \le p$ for all $j$. Then
     \[
       \P \left(\sum_{j=1}^d s_j \le \frac{1}{2} R d \right)
       \le (4p)^{d/2}.
     \]
  \end{lemma}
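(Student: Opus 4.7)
The plan is to run the standard tensorization argument: if the sum is small, then many of the individual $s_j$ must themselves be small, and by independence the probability of that happening on a fixed large set of indices decays like $p^{d/2}$; the number of choices of such index sets is at most $2^d$, giving the claimed $(4p)^{d/2}$ bound.

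First I would pass from the event $\{\sum s_j \le \frac{1}{2} R d\}$ to a combinatorial condition on the random set $J = \{j : s_j \le R\}$. Using non-negativity of the $s_j$, one has
\[
\sum_{j=1}^d s_j \;\ge\; \sum_{j \notin J} s_j \;>\; R \cdot (d - |J|),
\]
so the event $\{\sum s_j \le \frac{1}{2} R d\}$ forces $d - |J| < d/2$, i.e.\ $|J| \ge d/2$. Thus the event is contained in $\bigcup_{J_0 : |J_0| \ge d/2} \{ s_j \le R \text{ for all } j \in J_0\}$.

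Next I would apply independence on each fixed index set $J_0$ to get $\P(\forall j \in J_0: s_j \le R) \le p^{|J_0|} \le p^{d/2}$, and then a union bound over all subsets $J_0$ of $\{1,\dots,d\}$ (there are at most $2^d$ of them) to obtain
\[
\P\!\left(\sum_{j=1}^d s_j \le \tfrac{1}{2} R d\right)
\;\le\; 2^d \cdot p^{d/2} \;=\; (4p)^{d/2},
\]
which is precisely the stated bound. There is no real obstacle here; this is a short combinatorial/independence argument whose only subtlety is the bookkeeping of the inequality $|J| \ge d/2$, which is why the $\tfrac{1}{2}$ appears in the threshold and the $4$ appears in the conclusion.
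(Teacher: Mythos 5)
Your argument is correct and is exactly the paper's proof: the paper's one-line justification ("if $\sum_j s_j \le \frac12 Rd$, then $s_j \le R$ for at least $d/2$ indices $j$") is precisely your combinatorial reduction, with the same implicit union bound over at most $2^d$ index sets and independence giving $2^d p^{d/2} = (4p)^{d/2}$. (The only cosmetic remark: the step $p^{|J_0|}\le p^{d/2}$ uses $p\le 1$, but for $p>1$ the stated bound is trivial, so nothing is lost.)
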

  \begin{proof}
    If $\sum_{j=1}^d s_j \le \frac{1}{2}R d$, then $s_j \le R$ for at least
    $d/2$ numbers $j$.
  \end{proof}

Combining Lemma \ref{l: Levy one vector} with this inequality, we
obtain the tensorized version of Lemma \ref{l: Levy one vector}.
\begin{corollary}                 \label{c: tenzorized}
 Let $U=(u_{i,j})$
  be any $N \times n$  matrix, and let $V$ be
  a $d \times n$ matrix with independent $\d$ random coordinates.
  Then for any $x \in \R^n$
  \begin{equation}   \label{i: vector-sum}
    \LL_1 \left( (U \rowprod V) x , c d \norm{U \rowprod x}_Q
    \right)
   \le  C 2^d \exp \left( - c'\frac{ d \norm{U \rowprod x^T}_Q^2}{N \norm{U \rowprod x^T}^2} \right).
  \end{equation}
\end{corollary}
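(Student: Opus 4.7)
The plan is to exploit the independence of the rows of $V$ and combine the per-row Levy bound from Lemma~\ref{l: Levy one vector} with the elementary tensorization of Lemma~\ref{l: trivial-tensorization}. The first observation is that the $Nd$ coordinates of $(U \rowprod V)x$ split naturally into $d$ independent blocks: writing $V_1 \etc V_d$ for the rows of $V$, the block corresponding to $V_j$ is
\[
  Y_j := (U \rowprod V_j) x = (U \rowprod x^T) V_j^T \in \R^N,
\]
and the $Y_j$ are independent as $j$ varies. Consequently, for any target $y = (y_1 \etc y_d) \in \R^{Nd}$,
\[
  \norm{(U \rowprod V) x - y}_1 = \sum_{j=1}^d \norm{Y_j - y_j}_1,
\]
so the $\ell_1$ small-ball event factors through a sum of $d$ independent nonnegative summands.

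Next I would apply Lemma~\ref{l: Levy one vector} with $\e = V_j^T$ to each row separately, which yields
\[
  \P\bigl( \norm{Y_j - y_j}_1 \le c \norm{U \rowprod x^T}_Q \bigr)
  \le 2 \exp\left( -c' \frac{\norm{U \rowprod x^T}_Q^2}{N \norm{U \rowprod x^T}^2} \right)
\]
uniformly in $y_j \in \R^N$ (this uniformity is exactly what the Levy concentration function bound provides). Denote this upper bound by $p$ and set $R = c \norm{U \rowprod x^T}_Q$. Lemma~\ref{l: trivial-tensorization}, applied to the independent nonnegative random variables $s_j = \norm{Y_j - y_j}_1$, then gives
\[
  \P\left( \sum_{j=1}^d \norm{Y_j - y_j}_1 \le \tfrac{1}{2} d R \right)
  \le (4p)^{d/2}.
\]
Since this estimate holds uniformly in the choice of $y_1 \etc y_d$, taking the supremum over $y \in \R^{Nd}$ produces the claimed bound on $\LL_1\bigl((U \rowprod V)x,\ \tfrac{1}{2} c d \norm{U \rowprod x^T}_Q\bigr)$. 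The quantity $(4p)^{d/2}$ has precisely the shape $C\, 2^d \exp\bigl( -c'' d \norm{U \rowprod x^T}_Q^2 / (N \norm{U \rowprod x^T}^2) \bigr)$ after absorbing the harmless numerical factor and halving the constant $c'$ from the single-row estimate.

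There is no serious obstacle here: the structural observation that $(U \rowprod V)x$ decomposes into $d$ independent $N$-dimensional blocks is what allows the $\ell_1$-norm to split additively, and once this is in place the corollary is a mechanical combination of Lemma~\ref{l: Levy one vector} and Lemma~\ref{l: trivial-tensorization}. The only point requiring modest care is matching the threshold $R$ in the tensorization step to the concentration radius supplied by the single-row estimate, so that the $d$-fold improvement in the exponent appears for the same quantity $\norm{U \rowprod x^T}_Q^2 / (N \norm{U \rowprod x^T}^2)$ that already governs the one-row bound.
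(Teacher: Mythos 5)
Your proposal is correct and follows essentially the same route as the paper: decompose $(U \rowprod V)x$ into $d$ independent blocks given by the rows of $V$, apply Lemma \ref{l: Levy one vector} to each block uniformly in the shift, and tensorize via Lemma \ref{l: trivial-tensorization} with $s_j = \norm{(U \rowprod \e_j)x - y_j}_1$. The bookkeeping with the halved radius and the factor $(4p)^{d/2}=2^d p^{d/2}$ is exactly how the constants in \eqref{i: vector-sum} are meant to be absorbed.
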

\begin{proof}
 The coordinates of the vector $(U \rowprod V) x \in \R^{Nd}$ consist
 of $d$ independent blocks $(U \rowprod \e_1) x \etc (U \rowprod \e_d) x$,
 where $\e_1 \etc \e_d$ are the rows of $V$. The corollary follows
 from Lemma \ref{l: trivial-tensorization}, applied to the random
 variables $s_j=\norm{ (U \rowprod \e_j) x-y_j}_1$, where $y_1 \etc y_d
 \in \R^N$ are any fixed vectors.
\end{proof}

 To prove Theorem \ref{t: L_1 norm bound} we have to bound the
 probability that the matrix $\D_1 \rowprod \ldots \rowprod \D_K$
 maps some vector from the unit sphere into a small $\ell_1$ ball. Before
 doing that, we consider an easier problem of estimating the
 probability that this matrix maps a {\em fixed} vector into a small
 $\ell_1$ ball. We phrase this estimate in terms of the Levy
 concentration function.

 \begin{lemma}           \label{l: one vector}
  Let $U \in \WW_{K-1} \cap \VV_{K-1}$ be a $d^{K-1} \times n$ matrix, and
   let $\D_K$ be a $d \times n$ random matrix with
  independent $\d$ random entries.
  For  any $x \in \R^n$
  \begin{align*}
   &\LL_1 \left( (U \rowprod \D_K)x,
      \tilde{c} d^K \norm{x}_2 \right)
   \\
   &\le  \exp \left(- \frac{c'' d \norm{x}_2^2}{\norm{x}_{\infty}^2} \right)
   + \exp \left(- \frac{c'' d^K}{\log^{K-1} \left( \frac{en \norm{x}_{\infty}^2}{\norm{x}_2^2} \right)} \right).
  \end{align*}
 \end{lemma}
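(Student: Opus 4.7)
The plan is to apply the tensorized Levy concentration inequality of Corollary \ref{c: tenzorized} to the fixed matrix $U$ and the random matrix $V=\D_K$, and then to use the hypotheses $U\in\VV_{K-1}$ and $U\in\WW_{K-1}$ to control the radius and the exponent appearing in the resulting estimate. With $N=d^{K-1}$, Corollary \ref{c: tenzorized} gives
\[
\LL_1\bigl((U\rowprod\D_K)x,\,cd\norm{U\rowprod x^T}_Q\bigr)\le C2^d\exp\!\left(-c'\frac{d\norm{U\rowprod x^T}_Q^2}{d^{K-1}\norm{U\rowprod x^T}^2}\right).
\]
The hypothesis $U\in\VV_{K-1}$ supplies the lower bound $\norm{U\rowprod x^T}_Q\ge\tilde c\,d^{K-1}\norm{x}_2$, which both enlarges the Levy radius (so that, taking a small enough constant $\tilde c'\le c\tilde c$, it becomes at least $\tilde c'd^K\norm{x}_2$, matching the statement) and enters the numerator of the exponent.

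The core technical step is an upper bound on $\norm{U\rowprod x^T}$ in terms of $r^2:=\norm{x}_2^2/\norm{x}_\infty^2$. I would decompose $x$ into blocks $I_0,I_1,\ldots$ of type $l=\lfloor r^2\rfloor$ from the non-increasing rearrangement of $|x(j)|$. Since $\norm{(U\rowprod x^T)|_{I_m}}\le\norm{x|_{I_m}}_\infty\cdot\norm{U|_{I_m}}$, Cauchy--Schwarz across the orthogonal blocks gives $\norm{U\rowprod x^T}^2\le\sum_m\norm{x|_{I_m}}_\infty^2\norm{U|_{I_m}}^2$. Invoking the bound $\norm{U|_{I_m}}^2\le C(d^{K-1}+|I_m|\log^{K-1}(en/|I_m|))$ from $U\in\WW_{K-1}$ splits this sum into two pieces. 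The $d^{K-1}$-piece is controlled by $\sum_m\norm{x|_{I_m}}_\infty^2\le C\norm{x}_\infty^2$, using that successive block-maxima drop geometrically because $\norm{x|_{I_m}}_\infty^2\le\norm{x|_{I_{m-1}}}_2^2/|I_{m-1}|$ and $|I_m|=4^ml\ge4^mr^2/2$. The remaining piece is controlled by Lemma \ref{l: block decomposition} (giving $\sum_m|I_m|\norm{x|_{I_m}}_\infty^2\le5\norm{x}_2^2$), together with the monotonicity $\log^{K-1}(en/|I_m|)\le\log^{K-1}(en/l)\le C\log^{K-1}(en\norm{x}_\infty^2/\norm{x}_2^2)$. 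Altogether
\[
\norm{U\rowprod x^T}^2\le C\bigl(d^{K-1}\norm{x}_\infty^2+\norm{x}_2^2\log^{K-1}(en\norm{x}_\infty^2/\norm{x}_2^2)\bigr).
\]

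Substituting these bounds, the exponent in the concentration estimate is at least a constant multiple of $\min\bigl(dr^2,\,d^K/\log^{K-1}(en\norm{x}_\infty^2/\norm{x}_2^2)\bigr)$. Applying $\exp(-\min(A,B))\le\exp(-A)+\exp(-B)$ decouples the bound into two exponentials, and it remains to absorb the $C2^d$ prefactor. Under the hypothesis $n\le cd^K/\log_{(q)}d$ one has $\log(en\norm{x}_\infty^2/\norm{x}_2^2)\le\log(en)\le C_K\log d$, so the second exponent grows at least like $d^K/(C_K\log d)^{K-1}$, which for $K\ge2$ dominates $d$ by any prescribed multiplicative constant once $d$ is large enough, absorbing $2^d$ into the second term. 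The first exponent $dr^2\ge d$ permits absorption whenever $r^2$ exceeds a fixed constant; in the complementary bounded-$r^2$ regime the target bound either is dominated by the (larger) second term, which absorbs both $2^d$ prefactors at once, or is implied by the trivial estimate $\LL_1\le1$.

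The main obstacle is the block decomposition step: the $d^{K-1}$-contribution must be controlled by $\norm{x}_\infty^2$ alone, without an extra logarithmic factor, so that the two pieces of the upper bound on $\norm{U\rowprod x^T}^2$ separate cleanly into the two exponentials of the stated form. A secondary difficulty is the constant bookkeeping required to absorb $2^d$ uniformly in $x$, which relies essentially on $K\ge2$ and on the hypothesis $n\le cd^K/\log_{(q)}d$.
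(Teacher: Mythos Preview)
Your approach is essentially identical to the paper's: apply Corollary~\ref{c: tenzorized} with $N=d^{K-1}$, use $\VV_{K-1}$ for the $Q$-norm lower bound, bound $\norm{U\rowprod x^T}$ via the same block decomposition (blocks of type $\lfloor\norm{x}_2^2/\norm{x}_\infty^2\rfloor$) together with $\WW_{K-1}$, and then split the exponential via $\exp(-a/(b+c))\le\exp(-a/2b)+\exp(-a/2c)$. One caveat: the hypothesis $n\le cd^K/\log_{(q)}d$ you invoke is not part of the lemma's statement and is not needed---the $2^d$ prefactor can be absorbed before splitting, since the combined exponent $cd^K/(d^{K-1}\norm{x}_\infty^2+\log^{K-1}(en/s))$ is already at least $c'd$ once $\norm{x}_\infty\le\norm{x}_2$ and $\log(en)\le Cd$.
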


 \begin{proof}
   To use Corollary \ref{c: tenzorized}, we have to estimate the
   $Q$-norm and the operator norms of $U \rowprod x^T$. The
   estimate of the $Q$-norm is given by \eqref{c: large Q norm}.

  To estimate the operator norm, assume that $\norm{x}_2=1$, and set
  $
    s= \left \lfloor \norm{x}_{\infty}^{-2} \right \rfloor.
  $
  Let  $L$ be the maximal number $l$ such that $2^l s \le
  n$, and let $I_0 \etc I_L$ be the blocks of coordinates of $x$ of
  type $s$. Then
    $\norm{x|_{J_l}}_{\infty} \le
   2^{-l} \norm{x}_{\infty}$, and by Lemma \ref{l: block
   decomposition}
   \[
     \sum_{j=0}^{L} |J_l| \cdot \norm{x|_{J_l}}_{\infty}^2 \le 5.
   \]
   Let $y \in \R^n$. By Cauchy--Schwartz inequality, we have
   \begin{align*}
   \norm{(U \rowprod x^T)y}_2^2
   &= \norm{\sum_{l=0}^L(U|_{J_l} \rowprod x^T|_{J_l})y|_{J_l}}_2^2 \\
   &\le \left( \sum_{l=0}^L \norm{U|_{J_l} \rowprod x^T|_{J_l}}_2^2 \right)
      \cdot \left(  \sum_{l=0}^L  \norm{y|_{J_l}}_2^2 \right) \\
   &\le \left( \sum_{l=0}^L \norm{U|_{J_l} \rowprod x^T|_{J_l}}_2^2 \right)
      \cdot  \norm{y}_2^2,
   \end{align*}
   which means
   \[
     \norm{U \rowprod x^T}^2
     \le \sum_{l =0}^{L} \norm{U|_{J_l} \rowprod x^T|_{J_l}}^2
     \le \sum_{l =0}^{L} \norm{U|_{J_l}}^2 \cdot \norm{ x|_{J_l}}_{\infty}^2.
   \]

  Since $U \in \WW_{K-1}$, and $|J_l| \ge |J_1|=s$ for all $l \le
  L$, the previous inequality implies
  \begin{align*}
   \norm{U \rowprod x^T}^2
   &\le C \sum_{l =0}^{L}  \left( d^{K-1}+ |J_l| \cdot
   \log^{K-1} \left(\frac{ e n}{|J_l|} \right) \right)
    \cdot \norm{ x|_{J_l}}_{\infty}^2
    \\
    &\le C \left(  d^{K-1} \norm{x}_{\infty}^2
      + \log^{K-1} \left(\frac{e n}{s} \right) \right).
  \end{align*}
  Therefore, by Corollary \ref{c:
  tenzorized} and condition \eqref{c: large Q norm},
  \begin{align*}
  &\LL_1 \left( (U \rowprod \D_K)x,
      c d^K  \right)
   \\
   &\le \exp \left(- \frac{C d^K }{ d^{K-1} \norm{x}_{\infty}^2
      + \log^{K-1} \left(\frac{e n}{s} \right)} \right).
  \end{align*}
  The lemma follows from an elementary inequality
  $\exp(-\frac{a}{b+c}) \le \exp(-\frac{a}{2 b})+ \exp(-\frac{a}{2c})$.
 \end{proof}

\section{Lower bounds via the chaining argument}   \label{s: chaining}

 To get a global bound for the Levy concentration function using the
 bounds for each fixed vector, we prove a chaining-type
 estimate. Chaining argument is one of the main approaches to obtaining
 bounds for the {\em supremum} of a random process \cite{Ta1}.
 Let $\{X_t \mid t \in T\}$ be a random process indexed by a
 set $T$.
  The chaining method is based on representing $X_t$ as a sum of increments
  and proving an upper estimate for each increment separately, and
  combining these estimates using  the union bound.

  A similar approach, based on passing from a random variable to
   increments can be applied to estimating the {\em infimum} of a
   random process as well. In this case we isolate one ``big''
   increment, whose position in the chain depends on $t$.
    The rest of the increments is divided in two groups. In
   one group the increments are small, and we can bound their
   absolute values above, and use the triangle inequality. The
   increments from the other group may be big, but they belong to a small set of random
   variables. In such situation, we can condition on these
   increments, and obtain a lower bound on the conditional
   probability using the Levy concentration function of the ``big'' increment.
   Then we sum up these conditional probabilities over the
   small set. As usual for the chaining method, this step requires a
   balance between the estimate of the Levy concentration function,
   and the size of the set.

 \begin{lemma}               \label{l: chaining}
  Let  $R>0$, $\a \in (0, 1/2)$ and  let $\{l_j\}_{j=0}^L$ be a sequence
   of natural numbers such that
  $l_0=1$ and $l_{j+1} \ge 2 l_j$ for all $j=0 \etc L$. Set
  $n = \sum_{j=1}^L l_j$. Let $A: \R^n \to \R^N$
  be a random matrix with
  independent columns. Assume that for any $j = 1 \etc L$ there exists $p_j>0$
  such that for
  any $x \in S^{n-1}$ with $|\supp(x)| \le l_j, \
  \norm{x}_\infty \le l_{j-1}^{-1/2}$
  \begin{equation}    \label{c: Levy concentration}
    \LL_1(Ax, R)
    \le p_j \le \left( \frac{6e n}{ l_j \a^j} \right)^{-8 l_j}.
  \end{equation}
  Then for any $y \in \R^N$
  \[
    \P \left( \exists x \in S^{n-1} \ \norm{Ax-y}_1 \le \frac{\a^{L-1} R}{4} \right)
    \le  p_1^{1/2} +\P \left(\norm{A}> \frac{R}{8 \a \sqrt{N}} \right).
  \]
 \end{lemma}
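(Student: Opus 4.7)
The plan is a multi-scale chaining argument tailored to bound the infimum (not supremum) of the random field $x \mapsto \norm{Ax - y}_1$. First, at each scale $j = 1 \etc L$ I would construct an $\ell_2$-grid $\NN_j$ approximating the shell $\{v \in \R^n : |\supp v| \le l_j,\ \norm{v}_{\infty} \le l_{j-1}^{-1/2}\}$ on which the Levy hypothesis applies, with mesh of order $\alpha^j l_j^{-1/2}$. A volumetric count yields $|\NN_j| \le (6en/(l_j\alpha^j))^{l_j}$, and the hypothesis on $p_j$ then gives the key inequality $|\NN_j|\,p_j \le p_j^{7/8}$.

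For each $x \in S^{n-1}$, sort its coordinates by decreasing magnitude and split them into consecutive blocks of sizes $l_1 \etc l_L$, obtaining a disjoint-support decomposition $x = \sum_{j=1}^L \delta_j(x)$ with $\norm{\delta_j(x)}_{\infty} \le (l_1 + \cdots + l_{j-1})^{-1/2} \le l_{j-1}^{-1/2}$; after normalization each $\delta_j(x)$ lies in the $j$-th shell. Approximate each $\delta_j(x)$ by $w_j \in \NN_j$ within $\alpha^j$ in $\ell_2$; disjointness of supports gives $\norm{x - \sum_j w_j}_2 \lesssim \alpha$. On the event $\mathcal{B} = \{\norm{A} \le R/(8\alpha\sqrt{N})\}$, combining with the assumed $\norm{Ax - y}_1 \le \alpha^{L-1}R/4$ produces $\norm{A\sum_j w_j - y}_1 \le R/2$. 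The crucial point is that since the $w_j$'s have pairwise disjoint supports and $A$ has independent columns, the vectors $\{A w_j\}_{j}$ are stochastically independent; hence conditioning on the columns of $A$ outside $\supp w_{j^*}$ freezes the offset $y - \sum_{k \ne j^*} A w_k$, and the Levy hypothesis bounds the conditional probability that $\norm{A w_{j^*} - \text{offset}}_1 \le R$ by $p_{j^*}$.

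Rather than union-bounding over all tuples in $\prod_j \NN_j$ (vastly too many), I would index the bad event by a \emph{critical scale} $j^* = j^*(x) \in \{1 \etc L\}$: at scale $j^*$ only $w_1 \etc w_{j^*}$ need to be specified, the later increments being absorbed into the operator-norm estimate on $\mathcal{B}$. Summing the scale-$j^*$ contribution $\prod_{k \le j^*} |\NN_k|\,p_{j^*}$ and using $|\NN_j|\,p_j \le p_j^{7/8}$ together with the geometric growth $l_{j+1} \ge 2 l_j$ to telescope the cardinality product, the total is dominated by its $j^* = 1$ term, yielding the claimed $p_1^{1/2}$.

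The chief obstacle is this combinatorial balance: the exponent $8$ in $p_j \le (6en/(l_j\alpha^j))^{-8 l_j}$ is just enough to absorb the cumulative net cardinality $\prod_{k \le j^*} |\NN_k| \sim (6en/(l_{j^*}\alpha^{j^*}))^{C l_{j^*}}$ (for moderate $C$) while still leaving square-root headroom, and articulating the critical-scale construction so that each bad $x$ is captured by exactly one prefix $(w_1 \etc w_{j^*})$---with the discarded tail increments $w_k$, $k>j^*$, controlled solely through $\norm{A} \le R/(8\alpha\sqrt{N})$---is the technical heart of the chaining lemma.
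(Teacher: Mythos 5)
Your overall architecture is the same as the paper's: decompose $x$ into consecutive blocks of sizes $l_1 \etc l_L$ by decreasing magnitude, discretize at each scale, exploit independence of the columns to condition and apply the Levy bound at one distinguished scale, absorb the remaining increments through the operator norm on the event $\norm{A}\le R/(8\a\sqrt{N})$, and let the exponent $8$ in \eqref{c: Levy concentration} swallow the net cardinalities, with the sum over scales dominated by the first term. But there is a genuine gap precisely at what you call the technical heart, and it is not a routine detail. The hypothesis \eqref{c: Levy concentration} applies only to \emph{unit} vectors in the $j$-th shell, whereas the critical block $\delta_{j^*}(x)$ (and hence its approximant $w_{j^*}$) can have arbitrarily small Euclidean norm $\rho$. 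Your conditioning step, ``the Levy hypothesis bounds the conditional probability that $\norm{Aw_{j^*}-v}_1 \le R$ by $p_{j^*}$'' (with $v$ the frozen offset), then fails: writing $u=w_{j^*}/\norm{w_{j^*}}_2$, the event is $\norm{Au-v/\rho}_1\le R/\rho$, a ball of radius \emph{larger} than $R$, which \eqref{c: Levy concentration} does not control. Moreover, in deriving $\norm{A\sum_j w_j-y}_1\le R/2$ you have already thrown away the factor $\a^{L-1}$ from the assumed bound $\a^{L-1}R/4$ — and that factor is exactly the currency needed to pay for the renormalization of a small critical block.

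The paper's proof resolves this by two interlocking choices that your outline omits: (i) the critical scale is taken to be the \emph{largest} $j$ with $\norm{x_j}_2>\a^{j-1}/2$, which simultaneously makes the tail $\sum_{i>j}x_i$ of norm at most $\a^{j}$ (so it can be absorbed via $\norm{A}$) and gives a \emph{lower} bound on the critical block's norm; and (ii) the whole inequality is divided by $\norm{x_j}_2$, so that the critical block becomes a genuine unit vector in the shell, the assumed radius $\a^{L-1}(R/2-2\a\norm{A}_{2\to 1})$ becomes at most $R-\a\norm{A}_{2\to 1}$, and the earlier blocks, inflated by at most $2\a^{-(j-1)}$, land in the bounded set $Q_J$ of radius $2\a^{1-j}$, which is then covered by a net of cardinality at most $\left(6en/(\a^{j}l_j)\right)^{l_j}$. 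Note that it is the net of the \emph{rescaled} prefix, not of the raw blocks $\delta_k(x)$, that must be counted; your count of $\prod_{k\le j^*}|\NN_k|$ does not reflect the rescaling, and without the rescaling the Levy step collapses as above. Until you specify how $j^*$ is chosen, use the resulting lower bound on $\norm{x_{j^*}}_2$, and carry the normalization through the target, the prefix, and the net, the chaining step does not go through.
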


\begin{proof}
 Denote $\norm{A}_{2 \to 1}=\norm{A: B_2^n \to B_1^N}$.
 Let $j \in \{1 \etc L\}$ and let $J$ be a $l_j$-element subset of $\{1 \etc L\}$.
 Denote
 \[
   S_J= \{x \in S^{n-1} \mid |\supp(x) \subset J, \  \norm{x}_{\infty}
   \le l_{j-1}^{-1/2} \}.
 \]
    Set $m_j=\sum_{i=1}^{j-1} l_i$. Since the sequence $\{l_j\}_{j=1}^L$
   increases exponentially, $m_j \le  l_j$.
 We will need the following
 \begin{claim}
   Let $y \in \R^N$.
    Let
 \[
  Q_J=\{w \mid , \norm{w}_2 \le 2 \a^{1-j}, \
    \supp(w) \cap J= \emptyset, \ |\supp(w)| \le m_j \}.
 \]
    Then
    \[
      \P (\exists z \in Q_J+S_J \ \norm{Az-y}_1 \le R- \a
      \norm{A}_{2 \to 1} ) \le p_j^{3/4}.
    \]
 \end{claim}
 By the volumetric estimate we can choose an $(\a/2)$-net  $\mathcal{M}_J$
 in $S_J$ such that $|\mathcal{M}_J| \le (6/\a)^{l_j}$.

 Take any $x \in \mathcal{M}_J$ and $ w \in Q_J$.
 Denote $y'=y-Aw$. Then the vectors $Ax$ and $y'$ are independent.
 Conditioning on the  columns of $A$ with indexes from $\supp(w)$,
  and using \eqref{c: Levy concentration}, we get
 \[
  \P ( \norm{A(w+x)-y}_1 <   R \mid  A|_{J^c})
  \le \P ( \norm{Ax-y'}_1 <   R \mid  A|_{J^c}) \le p_j.
 \]
 Taking the expectation with respect to $A|_{J^c}$ yields
 \[
  \P ( \norm{A(w+x)-y}_1 <   R)  \le p_j.
 \]

 The volumetric estimate guarantees the existence of a $(\a/2)$-net
 $\NN_J$ in $Q_J$ such that
 \[
  |\NN_J|
  \le \binom{n}{m_j} \left(6 \a^{-j} \right)^{m_j}
  \le \left( \frac{6e n   }{\a^{j} m_j} \right)^{m_j}.
 \]

 Since $m_j \le l_j$,  the last quantity does not exceed $ \left( \frac{6e n }{\a^{j} l_j} \right)^{l_j}$.
 By the union bound and assumption \eqref{c: Levy concentration},
 \begin{align*}
  &\P (\exists x \in \mathcal{M}_J \ \exists w \in \NN_J \   \norm{A(w+x)-y}_1 <
  R) \\
  &\le |\NN_J| \cdot |\mathcal{M}_J| \cdot p_j
  \le \left( \frac{6e n   }{\a^{j} l_j} \right)^{l_j} \cdot
  \left( \frac{6}{\a} \right)^{l_j} \cdot p_j
  \le p_j^{3/4}.
 \end{align*}
 Assume that a point $x'+w' \in S_J+Q_J$ satisfies
 $ \norm{A(w'+x')-y'}_1 < R- \a \norm{A}_{2 \to 1}$.
 Then, approximating it by a point $x+w \in \mathcal{M}_J + \NN_J$,
 such that $\norm{x'+w'-x-w}_2 <\a$,
 we get  $ \norm{A(w+x)-y}_1 < R$.
 This, in combination with the probability estimate above, proves the claim.

 Applying the union bound again, we see that the event
 \begin{align*}
   \Om
   = \{ &\exists j \le n \ \exists J \subset \{1 \etc n\} \ |J|=l_j \
    \exists x \in S_J \ \exists w \in Q_J \\
   &\norm{A(w+x)-y}_1 < R- \a\norm{A}_{2 \to 1} \}
 \end{align*}
 satisfies
 \[
  \P(\Om) \le \sum_{j=1}^L \sum_{|J|=l_j} p_j^{3/4}
  \le \max_{j=1 \etc L} \binom{n}{l_j} p_j^{1/4} \cdot \sum_{j=1}^L p_j^{1/2}.
 \]
 By condition \eqref{c: Levy concentration}, $\binom{n}{l_j} p_j^{1/4} \le p_j^{1/8} \le 1/2$. The same condition and the exponential growth of $l_j$ show also that the sequence $\{p_j^{1/2}\}_{j=1}^L$ decays exponentially, and $\sum_{j=1}^L p_j^{1/2} \le 2 p_1^{1/2}$. This implies
 \begin{equation}\label{prob Omega}
  \P(\Om) \le p_1^{1/2}.
 \end{equation}

 Now let $x \in S^{n-1}$ be any point. Let $\pi: \{1 \etc n \} \to \{1 \etc n
 \}$ be a permutation rearranging the absolute values of the
 coordinates of $x$ in the non-increasing order: $|x_{\pi(1)}| \ge
 |x_{\pi(2)}| \ge \ldots \ge |x_{\pi(n)}|$. Let $I_1 \cup I_2 \cap
 \ldots \cap I_L = \{1 \etc n\}$ be the decomposition of $\{1 \etc n
 \}$ into a disjoint union of consecutive intervals such that
 $|I_j|=l_j$.
 Set $J_j= \pi^{-1}(I_j)$. In other words, the set $J_1$ contains
 $l_1$ largest coordinates of $x$, $J_2$ contains $l_2$ next largest
 etc. Let $x_j$ be the coordinate projection of $x$ to $J_j$, i.e.
 $x_j(i) = x(i) \cdot \mathbf{1}_{J_j}(i)$. Since the largest
 coordinate of $x_j$ has the position $\sum_{i=1}^{j-1}l_i+1$ in the
 non-increasing rearrangement, and $\norm{x}_2=1$, we conclude that
 \[
   \norm{x_j}_{\infty} \le \left( \sum_{i=1}^{j-1}l_i+1
   \right)^{-1/2}
   \le l_{j-1}^{-1/2}.
 \]
 If for all $j=1 \etc L$ $\norm{x_j}_2 \le \a^{j-1} /2$, then
 \[
   \norm{x}_2
   \le  \sum_{j=1}^L \norm{x_j}_2
   \le \frac{1}{2} \cdot \frac{1}{1-\a} <1.
 \]
 Hence, there exists a $j$ such that $\norm{x_j}_2 > \a^{j-1} /2$.
 Let $j$ be the largest number satisfying this inequality.
 Then the vector $u= \sum_{i=j+1}^{L} x_i$ satisfies
 $
   \norm{u}_2
   \le \sum_{i=j+1}^{L} \norm{x_i}_2
   \le \a^j.
 $

  Assume that $\norm{Ax-y}_1 \le \a^{L-1} (R/2 - 2 \a \norm{A}_{2 \to 1})$.
 Then
 \begin{align*}
   \norm{A (\sum_{i=1}^j x_i) -y}_1
   &\le \a^{j-1} (R/2 - 2 \a \norm{A}_{2 \to 1})
   +\norm{A}_{2 \to 1} \cdot \norm{u}_2  \\
    & \le \a^{j-1} (R/2 -  \a \norm{A}_{2 \to 1}).
 \end{align*}
    Set $J=\supp(x_j)$, $z=x_j/\norm{x_j}_2$, and  $w=( \sum_{i=1}^{j-1} x_i)/\norm{x_j}_2$.
    Since $\norm{x_j}_2 > \a^{j-1} /2$,
    $w \in Q_J$ and the inequality above implies \newline
    $\norm{A(w+z)-y/\norm{x_j}_2}_1 \le R-2 \a \norm{A}_{2 \to 1}$.
    Hence, the assumption above implies that the event $\Om$ occurs.
    Therefore,
  \begin{align*}
    &\P \left( \exists x \in S^{n-1} \ \norm{Ax-y}_1 \le \frac{\a^{L-1}R}{4} \right)
    \\
    &\le \P \left( \exists x \in S^{n-1} \
       \norm{Ax-y}_1 \le \a^{L-1} \Big(\frac{R}{2} -  2 \a \norm{A}_{2 \to 1} \Big)
       \text{ and } \norm{A}_{2 \to 1} \le \frac{R}{8 \a} \right)
    \\
    &\quad + \P \left( \norm{A}_{2 \to 1} > \frac{R}{8 \a} \right)
     \le \P(\Om) + \P \left( \norm{A}_{2 \to 1} > \frac{R}{8 \a}
     \right).
  \end{align*}
  Since $\norm{A}_{2 \to 1} \le \sqrt{N} \norm{A}$, the lemma is proved.
\end{proof}

 \section{Lower bounds for $\ell_1$ and $\ell_2$ norms}
 \label{s: lower bounds}

 In this section we use the chaining lemma \ref{l: chaining} to
 prove Theorems \ref{t: L_1 norm bound} and \ref{t: smallest singular
 value}. Actually we will prove a statement, which is stronger than
 Theorem \ref{t: L_1 norm bound}.

 \begin{theorem}  \label{t: L_1 Levy concentration bound}
 Let $K,q,n,d$ be natural numbers. Assume that
 \[
   n \le \frac{c d^K}{\log_{(q)}d}.
 \]
  and let $\D_1 \etc \D_K$ be  $d \times n$ matrices with
 independent $\d$ random entries. Then for any $y \in \R^{d^K}$
 \[
   \P \left( \exists x \in S^{n-1} \ \norm{(\D_1 \rowprod \ldots \rowprod \D_K)x-y}_1 \le c' d^K \right)
   \le C' \exp \left( - \bar{c} d  \right).
 \]
\end{theorem}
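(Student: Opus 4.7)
The plan is to apply the chaining Lemma \ref{l: chaining} to the row product $\tilde{\D} = \D_1 \rowprod \ldots \rowprod \D_K$, after conditioning on the first $K-1$ factors. Setting $U = \D_1 \rowprod \ldots \rowprod \D_{K-1}$, Lemmas \ref{l: norm} and \ref{l: Q norm} guarantee that $U \in \WW_{K-1} \cap \VV_{K-1}$ except on an event of probability $C e^{-cd}$, and Theorem \ref{t: norm-row product} gives $\norm{\tilde{\D}} \le C_0 \sqrt{d^K}$ except on an event of probability $e^{-cd}$ (using $n \ll d^K$). I will work conditionally on $U$ inside the intersection of these favourable events, and integrate $U$ out at the end. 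Conditionally on $U$, the columns of $\tilde{\D} = U \rowprod \D_K$ are independent Kronecker products of columns of $U$ and $\D_K$, so the independence hypothesis of Lemma \ref{l: chaining} holds.

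On the good event, Lemma \ref{l: one vector} supplies the per-vector Levy concentration input required by Lemma \ref{l: chaining}: for every $x \in S^{n-1}$ with $|\supp(x)| \le l_j$ and $\norm{x}_\infty \le l_{j-1}^{-1/2}$,
\[
 \LL_1(\tilde{\D} x,\, \tilde{c}\, d^K)
 \le \exp(-c'' d\, l_{j-1}) + \exp\Bigl(-\frac{c'' d^K}{\log^{K-1}(e n /l_{j-1})} \Bigr).
\]
The heart of the proof is then to construct a sequence $l_0 = 1 < l_1 < \cdots < l_L$ with $l_{j+1} \ge 2 l_j$, $\sum_{j=1}^L l_j = n$, and $L$ depending only on $K$ and $q$, so that both exponentials above are dominated by $(6 e n /(l_j \a^j))^{-8 l_j}$ for a fixed $\a \in (0, 1/2)$.

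Taking logarithms, the first term requires $l_j \log(e n/l_j) \lesssim d\, l_{j-1}$, allowing $l_j$ to grow by roughly a factor $d/\log(e n/l_{j-1})$ per step, while the second requires $l_j \log(e n/l_j) \lesssim d^K/\log^{K-1}(e n/l_{j-1})$. Starting from $l_0 = 1$, the first $O(K)$ terms may be chosen essentially geometric of ratio $d/\log d$, bringing the scale up to $d^K/(\log d)^K$. Bridging the remaining gap up to the threshold $n \le c\, d^K/\log_{(q)} d$ calls for a further block of $O(q)$ steps along which the logarithm $\log(e n/l_{j-1})$ is driven down one iterate at a time, so that in the final step it is only of order $\log_{(q)} d$; this is precisely where the iterated logarithm in the hypothesis on $n$ is consumed. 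Producing this sequence and checking the two constraints simultaneously is the main technical obstacle, although once the structure above is noted it is a careful bookkeeping rather than a conceptual difficulty.

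Once the hypothesis of Lemma \ref{l: chaining} is verified with $R = \tilde{c}\, d^K$ and $\a$ chosen so that $R/(8\a \sqrt{d^K}) \ge C_0 \sqrt{d^K}$, that lemma yields
\[
 \P\Bigl( \exists x \in S^{n-1}\ \norm{\tilde{\D} x - y}_1 \le \tfrac{\a^{L-1} \tilde{c}}{4}\, d^K \,\Bigm|\, U \Bigr)
 \le p_1^{1/2} + \P\Bigl( \norm{\tilde{\D}} > \tfrac{R}{8\a \sqrt{d^K}} \,\Bigm|\, U \Bigr),
\]
with $p_1^{1/2} \le \exp(-c d)$ by construction. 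Integrating over $U$, absorbing the exceptional $\WW_{K-1} \cap \VV_{K-1}$ event, and using Theorem \ref{t: norm-row product} to bound the last term gives the desired estimate $C' \exp(-\bar c\, d)$, with the constant $c'$ of the theorem equal to $\a^{L-1} \tilde{c}/4$.
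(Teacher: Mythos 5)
Your proposal follows essentially the same route as the paper's proof: condition on $U=\D_1\rowprod\ldots\rowprod\D_{K-1}$ lying in $\WW_{K-1}\cap\VV_{K-1}$ (Lemmas \ref{l: norm} and \ref{l: Q norm}), feed the per-vector bound of Lemma \ref{l: one vector} into the chaining Lemma \ref{l: chaining} with $R=\tilde c\,d^K$, control the norm term via Theorem \ref{t: norm-row product}, and build the scales $l_j$ in two phases --- roughly geometric of ratio $d/\log d$ up to $d^K/\log^K d$, then $O(q)$ further steps that peel off one iterate of the logarithm at a time --- which is exactly the paper's construction via the constants $c_j$ and the recursively defined $\b_s$. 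The bookkeeping you defer (verifying \eqref{c: Levy concentration} along both phases, and handling general $n$ by restricting to the first $n$ columns of the matrix built for $\tilde n=\sum_j l_j$) is carried out in the paper and is indeed routine once the two constraints you state are in place, so the argument is sound.
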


\begin{proof}
 Assume first, that $d^K \ge n \ge d^{K-1/2}$ so the condition of Lemma
 \ref{l: norm} holds for $k=K-1$.
Set $R= \tilde{c} d^{K}$, where $\tilde{c}$ is the constant from
Lemma
 \ref{l: one vector}
 Set $\a=8 \tilde{c}/C'$, where $C'$
 is the constant from Theorem \ref{t: norm-row product}.
 By this Corollary,
 \[
   \P(\norm{\D_1 \rowprod \ldots \rowprod \D_K} > \frac{R}{8\a \sqrt{d^K}})
    \le \exp \left(-cd \right).
 \]
 Denote $U=\D_1 \rowprod \ldots \rowprod \D_{K-1}$, and let $\mathcal{U}$
 be the set of all $d^{K-1} \times n$ matrices $A$ satisfying
 \[
   \P(\norm{A \rowprod \D_K} > \frac{R}{8\a \sqrt{d^K}}) \le
   \exp \left(-c'd \right),
 \]
 where $c'=c/2$. By the Chebychev's inequality
 $\P(U \in \mathcal{U}^c) \le \exp \left(-c'd \right)$.
 Let $y \in \R^{d^K}$.
 By Lemmata \ref{l: norm} and \ref{l: Q norm},
\begin{align*}
  &\P(\exists x \in S^{n-1} \mid  \norm{(\D_1 \rowprod \ldots \rowprod \D_K)x-y}_1 <c d^K)    \\
  &\le \P(\exists x \in S^{n-1} \mid  \norm{(U \rowprod \D_K)x-y}_1 <c d^K
  \text{ and } U \in
  \mathcal{W}_{K-1} \cap \VV \cap \mathcal{U}) \\
  &   \quad  +c e^{-c'' d}.
\end{align*}
 This estimate shows that it is enough to bound the conditional probability
 \[
  \P(\exists x \in S^{n-1} \mid  \norm{(U \rowprod \D_K)x-y}_1 <c d^K \mid U)
 \]
  for all matrices
 $U \in \WW_{K-1} \cap \VV \cap \mathcal{U}$.
 This bound is based on Lemma \ref{l: chaining}.
 Fix a matrix $U \in  \WW_{K-1} \cap \VV \cap \mathcal{U}$ for the rest of the proof.
 Let $L=K+q$. It is enough to define numbers $l_1 \etc l_L \in \N$,
 and $p_1 \etc p_L \in (0,1)$ which
 satisfy the conditions of Lemma \ref{l: chaining}. These numbers
 will be constructed differently for $j \le K$ and $j>K$. The
 difference between these cases stems from the different behavior of
 the bound in Lemma \ref{l: one vector}. For relatively small $l_j$
 the $\ell_{\infty}$ norm of a vector $x$  is large, and the second term
 in Lemma \ref{l: one vector} is negligible, compare to the first
 one. However, for $l_j \ge cd^K/\log^K n$ the picture is opposite,
 and the second term is dominating.

 We consider the case $1 \le j \le K$ first. Set $l_0=1$ and $c_0=1$.
  For $1 \le j \le K$ set
  \begin{equation}  \label{eq: l_j}
   l_j=\left \lfloor \frac{c_j d^j}{\log^j d}  \right \rfloor,
  \end{equation}
  where the constants $c_1 \etc c_K$ will be defined inductively.
  Assume that $c_1 \etc c_{j-1}$ are already defined.
  Applying Lemma \ref{l: one vector} to any vector $x \in S^{n-1}$
  with $\norm{x}_{\infty} \le l_{j-1}^{-1/2}$, we get
 \begin{align*}
   &\P \left( \norm{(U \rowprod \D_K)x-y}_1
     \le c d^K\right)
   \le  \exp \left(- c d l_{j-1} \right)
   + \exp \left(- \frac{c d^K}{\log^{K-1}  n } \right)
   \\
   &\le \exp \left(- \frac{c'_{j-1} d^j}{\log^{j-1} d} \right)=:p_j,
  \end{align*}
  where we can take $c_{j-1}'=c \cdot c_{j-1}/2$.
  Inequality \eqref{c: Levy concentration} reads
  \[
    \frac{c'_{j-1} d^j}{\log^{j-1} d}
    \ge 8 \frac{c_j d^j}{\log^j d} \cdot
      \log \left( \frac{6e n \cdot \log^j d}{c_j  d^j   \a^j} \right).
  \]
  Since $n \le d^K$, this inequality follows from
  \[
   c_{j-1}' \ge  \frac{8 c_j }{\log d} \cdot
      \log \left( \frac{6e d^{K}}{c_j    \a^K} \right).
  \]
  Therefore, we can choose $c_j$ independently of $d$, so that the inequality above is
  satisfied. Thus, the sequence $l_1 \etc l_K$ satisfies condition
  \eqref{c: Levy concentration}. Also, if $d \ge d_0$ for some $d_0$
  depending only on $K$ and $\d$, then $l_{j+1} \ge 2 l_j$ for all
  $j=1 \etc K-1$.

  Let us now define the numbers $l_{K+s}$ for $s=1 \etc q+1$. To
  this end define the
  sequence $\{\b_s\}_{s=0}^{q}$ by induction. Set
  \begin{align*}
   \b_0&=\frac{\log^K d}{c_K} \quad \text{and}\\
   \b_s &= \tilde{c}  \log_{(1)}^K \left(6 e  \b_{s-1}  \right) \
     \text{ for }       1 \le s \le q,
  \end{align*}
  where the number $\tilde{c} \ge 1$ will be chosen below.
  For $0 \le s \le q$ set
  \[
    l_{K+s}=\lfloor d^K/\b_s \rfloor.
  \]
    Note that for $s=0$ this formula agrees with \eqref{eq: l_j}.
    Let $1 \le s \le q$.
  By Lemma \ref{l: one vector}, any vector $x \in S^{n-1}$
  with $\norm{x}_{\infty} \le l_{K+s-1}^{-1/2}$ satisfies
 \begin{align*}
   &\P \left( \norm{(U \rowprod \D_K)x-y}_1
     \le c d^K\right)  \\
   &\le  \exp \left(- c d l_{K+s-1} \right)
   + \exp \left(- \frac{c d^K}{\log^{K-1} \left( \frac{e n }{l_{K+s-1} } \right) } \right)
   \\
   &\le 2 \exp \left(- \frac{c d^K}{\log^{K-1} \left( \frac{e n }{l_{K+s-1} } \right) } \right)=:p_{K+s}.
  \end{align*}
  The last inequality follows from $d l_{K+s-1} > d^K$.
  In this case, condition \eqref{c: Levy concentration} reads
  \[
     \frac{c d^K}{\log^{K-1}  \left( \frac{e n }{l_{K+s-1} } \right) }
    \ge 8 l_{K+s} \cdot \log \left( \frac{6 e n}{l_{K+s}
    \a^{K+s}} \right),
  \]
  which can be rewritten as
  \[
   \frac{c}{\log^{K-1} \left( \frac{e n \b_{s-1}}{d^K} \right)}
   \ge \frac{8}{\b_s} \cdot
     \log \left( \frac{6 e n \b_{s}}{\a^{K+s} d^K}  \right).
  \]
  Since the sequence $\{\b_s \}_{s=0}^q$ is decreasing,
  and $n \le d^K$, the previous inequality holds, provided
  \[
    \b_{s}
    \ge \frac{8}{c}   \log^{K-1} \left( e  \b_{s-1} \right)
    \cdot \left[ \log(6 e \b_{s-1}) + (K+q) \log \frac{1}{\a} \right].
  \]
  Since by the definition of $\b_s$, $\log (e \b_{s-1}) \ge 1$, we
  can choose
  \[
    \tilde{c}=  \frac{8}{c} \cdot (K+q) \log \frac{1}{\a}.
  \]
  The inductive definition of the
  numbers $\b_1 \etc \b_q$ is complete, and the sequences $l_1 \etc
  l_{K+q}, \ p_1 \etc p_{K+q}$ satisfy condition \eqref{c: Levy
  concentration}. Also, if $d \ge d_1$ for some $d_1$ depending only
  $K,q$, and $\d$, then $\b_{s+1} \le \b_s/2$, and so $l_{K+s+1} \ge
  2 l_{K+s}$ for $s=0 \etc q-1$.

 Set $\tilde{n} = \sum_{j=1}^{K+q} l_j$. Then $l_{K+q} \le \tilde{n}
 \le 2 l_{K+q}$. From the definition of $\b_s$ and induction follows
 that $1 \le \b_s \le c' \log_{(s)} d$ for all $s=1 \etc q$.
  Hence, there exists $c>0$ depending only on $K,q,
 \d$ such that
 \[
   \frac{c d^K}{\log_{(q)} d} \le \tilde{n} \le d^K.
 \]
    Thus, for $d \ge \max(d_0,d_1)$, and $n = \tilde{n}$, the
    assertion of Theorem \ref{t: L_1 Levy concentration bound}
    follows from Lemma \ref{l: chaining}. It automatically extends
    to all $n \le \tilde{n}$, since for any $y \in \R^{d^K}$ the
    quantity
    \[
      \min_{x \in S^{\tilde{n}-1}} \norm{(\D_1 \rowprod \ldots \rowprod \D_K)x-y}_1
    \]
    can only increase, if we take the minimum over $S^{\tilde{n}-1}
    \cap \R^n$, instead of the whole sphere, in other words, if we
    consider a submatrix of $\D_1 \rowprod \ldots \rowprod \D_K$ consisting of $n$ first
    columns. It can be also automatically extended to the case $d <
    \max(d_0,d_1)$ by choosing a large constant $C'$ in the
    formulation of the Theorem. The proof is now complete.
\end{proof}
\begin{remark} \label{r: zero column}
 The probability estimate of Theorem \ref{t: L_1 Levy concentration bound} is actually
 optimal. Indeed, let $y=0$, and assume that the entries of the matrices $\D_1 \etc
 \D_K$ are i.i.d. random variables taking values $0,1,-1$ with
 probability $1/3$ each. Then with probability $(1/3)^d$, the first
 column of $\D_1$ is 0, and so the first column of $\D_1 \rowprod
 \ldots \rowprod \D_K$ is 0 as well.
\end{remark}

 We conclude with the proof of Theorem \ref{t: smallest singular value}.
 Set $\tilde{\D}= \D_1 \rowprod \ldots \rowprod \D_K$. By Theorem \ref{t: L_1 norm
 bound}, with probability at least $1-\exp (-cd)$,
  \[
   \norm{\tilde{\D}} \le C' d^{K/2} \quad \text{ and }
  \forall x \in S^{n-1} \ \norm{\tilde{\D}x}_1 \ge c' d^K.
  \]
 Then for any $x \in S^{n-1}$
 \[
  c'd^K \le \norm{\tilde{\D} x}_1 \le d^{K/2} \norm{\tilde{\D} x}_2
   \le d^{K/2} \norm{\tilde{\D} }_2 \cdot \norm{ x}_2 \le C' d^K,
 \]
 so all these norms are equivalent.
 Comparison between the first and the third term of this inequality
 implies Theorem \ref{t: smallest singular value}.
 Moreover, as in \cite{R1}, we can conclude
 that
 $\tilde{\D} \R^n$ is a Kashin subspace of $\R^{d^K}$, i.e. the
 $L_1$ and $L_2$ norms are equivalent on it. More precisely, this
 establishes the following corollary.

 \begin{corollary}  \label{cor: Kashin's subspace}
  Under the conditions of Theorem \ref{t: L_1 norm bound}
  \[
   \P(\forall y \in \tilde{\D} \R^n \ \norm{y}_1 \le d^{K/2} \norm{y}_2 \le C'' \norm{y}_1)
   \ge 1 - \exp \left(-cd \right).
  \]
 \end{corollary}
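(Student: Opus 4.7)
The plan is to read off the corollary as a direct consequence of Theorems~\ref{t: norm-row product} and~\ref{t: L_1 norm bound}, combined by a union bound. The left-hand inequality $\norm{y}_1 \le d^{K/2}\norm{y}_2$ is nothing but Cauchy--Schwarz applied in $\R^{d^K}$, so it holds deterministically for every $y \in \R^{d^K}$ regardless of the matrices $\D_1 \etc \D_K$. Thus, no probability is spent on it, and it suffices to establish the right-hand inequality $d^{K/2}\norm{y}_2 \le C''\norm{y}_1$ on the image of $\tilde{\D}$.

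To that end, I would introduce two good events. Let $\Om_1$ be the event that $\norm{\tilde{\D}} \le C' d^{K/2}$; by Theorem~\ref{t: norm-row product} (applied in the regime $n \le d^K$), $\P(\Om_1^c) \le \exp(-cd)$. Let $\Om_2$ be the event that $\inf_{x \in S^{n-1}}\norm{\tilde{\D}x}_1 \ge c'd^K$; by Theorem~\ref{t: L_1 norm bound} (with $y=0$), $\P(\Om_2^c) \le C'\exp(-\bar c d)$. A union bound gives $\P(\Om_1 \cap \Om_2) \ge 1 - \exp(-cd)$ after adjusting constants.

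On $\Om_1 \cap \Om_2$, take any $y \in \tilde{\D}\R^n$. Since $\Om_2$ forces $s_n(\tilde{\D}) > 0$, the matrix $\tilde{\D}$ is injective on $\R^n$, so we may uniquely write $y = \tilde{\D}x$. By homogeneity, $\norm{\tilde{\D}x}_1 \ge c'd^K \norm{x}_2$ for every $x \in \R^n$, and $\norm{\tilde{\D}x}_2 \le \norm{\tilde{\D}}\,\norm{x}_2 \le C'd^{K/2}\norm{x}_2$. Chaining these two bounds yields
\[
  d^{K/2}\norm{y}_2 \le C' d^K \norm{x}_2 \le \frac{C'}{c'}\norm{y}_1,
\]
so the corollary holds with $C'' = C'/c'$. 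There is no real obstacle here; the only thing to be slightly careful about is to invoke Theorem~\ref{t: L_1 norm bound} for the specific point $y=0$ (rather than the stronger Levy-type Theorem~\ref{t: L_1 Levy concentration bound}, which would also work), and to note that once the infimum over the sphere is bounded below, homogeneity automatically extends the estimate to all of $\R^n$, which is what turns the sphere-wise bound into a bound on the whole subspace $\tilde{\D}\R^n$.
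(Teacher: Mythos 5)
Your proposal is correct and follows essentially the same route as the paper: the paper likewise combines the norm bound of Theorem \ref{t: norm-row product} with the $\ell_1$ lower bound of Theorem \ref{t: L_1 norm bound} on a common event of probability $1-\exp(-cd)$, obtains the chain $c'd^K \le \norm{\tilde{\D}x}_1 \le d^{K/2}\norm{\tilde{\D}x}_2 \le C'd^K$ for $x \in S^{n-1}$ (the first comparison being Cauchy--Schwarz), and extends to all of $\tilde{\D}\R^n$ by homogeneity. The only cosmetic difference is your remark about injectivity, which is not actually needed since any representation $y=\tilde{\D}x$ suffices.
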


\end{document}